\tikzset{
    >=stealth',
    punkt/.style={
           rectangle,
           rounded corners,
           draw=black, very thick,
           text width=6.5em,
           minimum height=2em,
           text centered},
    pil/.style={
           ->,
           thick,
           shorten <=2pt,
           shorten >=2pt,}
}
\numberwithin{equation}{section}
\newtheorem{theorem}{Theorem}[section]
\newtheorem{lemma}[theorem]{Lemma}
\newtheorem{assumption}[theorem]{Assumption}
\theoremstyle{definition}
\newtheorem{definition}[theorem]{Definition}
\theoremstyle{remark}
\newtheorem{remark}[theorem]{Remark}
\newcommand{\ba}{\bm a}
\newcommand{\p}{{\partial}}
\newcommand{\nab}{\nabla}
\newcommand{\jump}[1]{\left[\hspace{-0.025in}\left[#1\right]\hspace{-0.025in}\right]}
\newcommand{\bld}[1]{\boldsymbol{#1}}
\newcommand{\bt}{\bld{t}}
\newcommand{\bI}{\bld{I}}
\newcommand{\bb}{\bld{b}}
\newcommand{\bv}{\bld{v}}
\newcommand{\bw}{\bld{w}}
\newcommand{\bp}{\bld{p}}
\newcommand{\bn}{\bld{n}}
\newcommand{\bu}{\bld{u}}
\newcommand{\bW}{\bld{W}}
\newcommand{\bV}{\bld{V}}
\newcommand{\bq}{\bld{q}}
\newcommand{\bPi}{\bld{\Pi}}
\newcommand{\bz}{\boldsymbol{z}}
\newcommand{\bH}{\bld{H}}
\newcommand{\bx}{\bld{x}}
\newcommand{\bC}{\bld{C}}
\newcommand{\bL}{\bld{L}}
\newcommand{\bX}{{\bm X}}
\newcommand{\bPhi}{{\bm \Phi}}
\newcommand{\bvarphi}{\bm \varphi}
\newcommand{\calE}{\mathcal{E}}
\newcommand{\bbR}{\mathbb{R}}
\newcommand{\calS}{\mathcal{S}}
\newcommand{\calT}{\mathcal{T}}
\newcommand{\bPsi}{{\bm \Psi}}
\newcommand{\calN}{\mathcal{N}}
\newcommand{\bnu}{{\bm \nu}}
\newcommand{\calP}{\mathcal{P}}
\newcommand{\calM}{\mathcal{M}}
\newcommand{\bell}{{\bm \ell}}
\newcommand{\calO}{\mathcal{O}}
\newcommand{\bbN}{\mathbb{N}}
\newcommand{\pt}{\wideparen}
\newcommand{\rev}[1]{\textcolor{black}{#1}}
\def\widebreve{\mathpalette\wide@breve}
\def\wide@breve#1#2{\sbox\z@{$#1#2$}%
     \mathop{\vbox{\m@th\ialign{##\crcr
\kern0.08em\brevefill#1{0.8\wd\z@}\crcr\noalign{\nointerlineskip}%
                    $\hss#1#2\hss$\crcr}}}\limits}
\def\brevefill#1#2{$\m@th\sbox\tw@{$#1($}%
  \hss\resizebox{#2}{\wd\tw@}{\rotatebox[origin=c]{90}{\upshape(}}\hss$}
\newcommand{\ipt}{\breve}
\title[Taylor-Hood for surface Stokes]{A Taylor-Hood finite element method for the surface Stokes problem without penalization}
\thanks{Last updated: \today}
\author[A. Demlow and M. Neilan]{
Alan Demlow\address{Department of Mathematics, Texas A\&M University, College Station, TX, 77843}
\email{demlow@tamu.edu}
\and{Michael Neilan}
\address{Department of Mathematics, University of Pittsburgh, Pittsburgh, PA 15260}
\email{neilan@pitt.edu}}
\thanks{ The first author was partial supported by NSF grant DMS-2012326.  The second author 
was partially supported by NSF grant DMS-2309425}
\subjclass[2000]{65N12, 65N15, 65N30}
\keywords{surface Stokes equation; finite element method; Taylor--Hood}
\begin{document}

\maketitle

\begin{abstract}
Finite element approximation of the velocity-pressure formulation of the surfaces Stokes equations is challenging because it is typically not possible to enforce both tangentiality and $H^1$ conformity of the velocity field.  Most previous works concerning finite element methods (FEMs) for these equations thus have weakly enforced one of these two constraints by penalization or a Lagrange multiplier formulation.  Recently in \cite{DemlowNeilan23} the authors constructed a surface Stokes FEM based on the MINI element which is tangentiality conforming and $H^1$ nonconforming, but  possesses sufficient weak continuity properties to circumvent the need for penalization.  The key to this method is construction of velocity degrees of freedom lying on element edges and vertices using an auxiliary Piola transform.  In this work we extend this methodology to construct Taylor-Hood surface FEMs.  The resulting method is shown to achieve optimal-order convergence when the edge degrees of freedom for the velocity \rev{space} are placed at Gauss-Lobatto nodes.  Numerical experiments confirm that this nonstandard placement of nodes is necessary to achieve optimal convergence orders. 
\end{abstract}

\thispagestyle{empty}

\section{Introduction}

This paper concerns the finite element approximation 
of the surface Stokes problem given by
\begin{equation}
\label{eqn:Stokes}
\begin{aligned}
-\bPi {\rm div}_\gamma {\rm Def}_\gamma \bu+\nab_\gamma p+\bu & = {\bm f}\qquad&\text{on }\gamma,\\
{\rm div}_\gamma \bu  = 0\text{ and } \bu\cdot \bnu &= 0 \qquad &\text{on }\gamma,
\end{aligned}
\end{equation}
where $\gamma \subset \bbR^3$ is a smooth, connected, and orientable two-dimensional surface without boundary.
Here $\bu:\gamma\to \mathbb{R}^3$ and $p:\gamma\to \mathbb{R}$ denote the velocity and pressure, respectively,
$\bPi$ is the tangential projection, and ${\rm Def}_\gamma$ is the surface deformation operator.
The letter $\bnu$ denotes the outward unit normal of $\gamma$, so that the constraint $\bu \cdot \bnu=0$ means
the velocity is tangent to the surface.

We focus on surface finite element methods (SFEMs) for \eqref{eqn:Stokes} based on the classical velocity-pressure formulation, where finite element spaces are defined on a piecewise-polynomial approximation of the surface. 
These methods mirror their Euclidean counterparts and are formulated
based on the PDE's variational formulation. As in the Euclidean case, the well-posedness and stability of
a SFEM for \eqref{eqn:Stokes} relies on a discrete inf-sup condition, showing that the choice
of finite element spaces cannot be chosen based on approximation properties alone. 
This naturally leads to the use of classical finite element Stokes pairs.  However  the tangential constraint $\bu\cdot \bnu=0$ imposes additional difficulties not found in the Euclidean setting. As explained in, e.g., \cite{DemlowNeilan23}, the simultaneous enforcement 
of tangentiality and $\bH^1$-conformity of the velocity approximation is not feasible on non-$C^1$ surfaces, and therefore at least one of these conditions must be relaxed in the numerical method.
For example, \cite{Fries18,GJOR18,OQRY18,Maxim19,HansboLarsonLarsson20,JORZ21,BJPRV22} use finite element spaces
based on $\bH^1$-conforming Stokes pairs and enforce the tangentiality constraint weakly through penalization or Lagrange multipliers.
This approach introduces additional degrees of freedom, as one approximates the redundant normal component of the velocity,
and they require a high-order approximation of the outward unit normal of $\gamma$ to achieve optimal-order convergence,
at least for low-order methods (cf.~\cite{HP23}).  Alternatively, one may construct exactly tangential velocity approximations
by utilizing $\bH({\rm div})$-conforming finite element spaces \cite{SurfaceStokes1,SurfaceStokes2}. This strategy
requires additional consistency and stability terms in the method, either by interior penalty techniques or through stabilization,
which can potentially increase the complexity of the scheme. The use of larger $\bH({\rm div})$-conforming finite element spaces also 
results in a larger system of unknowns compared to a $\bH^1$-conforming discretization.


Recently the authors proposed a novel finite element  method for the surface Stokes problem 
in \cite{DemlowNeilan23} that addresses many of the shortcomings of the aforementioned schemes. 
Similar to \cite{SurfaceStokes1,SurfaceStokes2}, this method enforces the tangentiality constraint exactly within the finite element space but relaxes function continuity.
The discrete velocity space is  ${\bm H}({\rm div})$-conforming,
yet in contrast to \cite{SurfaceStokes1,SurfaceStokes2}, it possesses sufficient weak continuity properties to ensure sufficient consistency
of the scheme without the inclusion of  edge-integral terms involving jumps and averages.  These properties are obtained by a novel design of the degrees of freedom, notably by incorporating an auxiliary Piola transform to transfer function information at nodal degrees of freedom without using data from the exact surface. As a result, the method in \cite{DemlowNeilan23} is the first convergent finite element method for \eqref{eqn:Stokes} that exactly mirrors the problem's variational formulation. 

The method in \cite{DemlowNeilan23} is based on the lowest-order Euclidean MINI pair \cite{mini}
 defined on  piecewise linear surface approximations, resulting in a first-order approximation scheme
 with respect to the $\bH^1\times L_2$ energy norm. Numerical experiments reported in \cite{DemlowNeilan23} also 
suggest that the $\bL_2$ velocity error converges with second-order, although a theoretical justification of this behavior was not provided.
While \cite{DemlowNeilan23} only considered one choice of Stokes pair, 
the methodology presented there is applicable to any finite element pair with nodal degrees of freedom.  
The theoretical analysis (e.g., inf-sup stability) must however be carried out in a case-by-case basis. 

 The objective of this paper is to build upon \cite{DemlowNeilan23}
and generalize the methodology to higher-order schemes defined on high-order geometry approximations.
 We apply the methodology in \cite{DemlowNeilan23} to the classical Taylor-Hood pair,
 taking the velocity space to consist of (mapped) piecewise polynomials
 of degree $k\ge 2$,  and the pressure space to be the (mapped) continuous, piecewise polynomials
of degree $(k-1)$. Similar to \cite{DemlowNeilan23}, nodal degrees of freedom of the velocity space are defined through
a novel use of the Piola transform to ensure tangentiality, ${\bm H}({\rm div})$-conformity,
and weak continuity properties of the finite element space. This construction is done with respect to the surface approximation, and the underlying
algorithm does not depend on the exact surface $\gamma$.

Although the design of the proposed finite element spaces and 
method builds directly on our previous work,
the convergence analysis is non-trivial and requires new theoretical tools.
For example, to establish inf-sup stability of the  Taylor--Hood pair,
we employ a surface-adapted version of  Verf\"urth's trick. In this approach, we first derive
an inf-sup stability result with respect to a weighted $H^1$-type pressure norm, and then use a specially designed
interpolation operator to extend this result to the $L_2$-norm.

Another key difficulty lies in
designing finite element spaces with sufficient weak continuity properties.
The analysis indicates and numerics confirm that a judicious placement of degrees of freedom
is crucial to ensure the method converges with optimal order. Specifically
edge degrees of freedom (which are absent in the MINI element) must coincide
with the Gauss-Lobatto nodes so that the inconsistency of the scheme is of high
enough order to guarantee optimal-order convergence in the isoparametric case.  In the case of the practially important lowest-order Taylor-Hood pair $\mathbb{P}^2-\mathbb{P}^1$, the interior edge Gauss-Lobatto node is the edge midpoint, so the nodes we use to define our degrees of freedom are precisely the Lagrange nodes and we thus obtain optimal convergence of the classical lowest-order Taylor-Hood method.  For $\mathbb{P}^3-\mathbb{P}^2$ and higher-order elements, the edge Gauss-Lobatto points do not coincide with standard Lagrange points, and so our degrees of freedom are not the classical ones.  We note that the recent work \cite{DurstNeilan24} also uses  Gauss-Lobatto points as degrees of freedom for the Piola-transformed Scott-Vogelius element
for the Euclidean Stokes problem on smooth domains. The present work shows that this essential idea carries over to the analogous surface problem, albeit with significant additional technical challenges in the analysis.

We emphasize that there are two distinct ``variational crimes'' or nonconformity errors present in our method:  a ``geometric error'' resulting from the approximation of $\gamma$ by a discrete counterpart, and a ``nonconformity error'' resulting from the lack of $H^1$ conformity in our method.  As is typical of surface finite element methods, our analysis indicates that the order of the geometric error increases with the degree of the polynomial discrete surface used to approximate $\gamma$.  In contrast, as confirmed in our numerical experiments, the order of the nonconformity error does not increase with the polynomial degree of the discrete surface.  Rather, the placement of the edge degrees of freedom is the key to obtaining optimal convergence.  The situation is thus analogous to the classical nonconforming Crouzeix-Raviart element (cf.~\cite{CR73,BrennerCR15}), where placing the degrees of freedom precisely at edge midpoints is essential to obtaining optimal order convergence. 

Finally, we prove optimal-order velocity error estimates in the $\bL_2$-norm, again a result
missing in \cite{DemlowNeilan23}.
This is obtained using a standard duality argument, combined with the 
weak continuity properties of the finite element space
and improved geometric consistency estimates.

The rest of the paper is organized as follows. In the next section,
we set the notation, outline our assumptions, and state several preliminary results.
In Section \ref{sec-3}, we define the finite element spaces and prove
approximation properties and inf-sup stability results.
Section \ref{sec-Consistency} states several consistency results,
estimating the non-conformity of the finite element spaces.
In Section \ref{sec-FEM} we state the finite element method
and prove error estimates in  the $\bH^1\times L_2$ energy
norm and derive an $\bL_2$ error estimate of the velocity approximation.
The paper concludes with numerical experiments
in Section \ref{sec-numerics}.

\subsection{\rev{Notation Summary}}\

\begin{table}[h!]
\centering
\rev{
\begin{tabular*}{0.9\textwidth}{@{\extracolsep{\fill}}ll@{\hspace{1cm}}ll}
\multicolumn{2}{l}{\emph{Exact surface}} 
 & \multicolumn{2}{l}{\emph{Polyhedral surface}} \\[3pt]
$\gamma$            & exact, smooth surface
 & $\bar{\Gamma}_h$    & $O(h^2)$ polyhedral approximation to $\gamma$\\
$\bnu$              & outward unit normal of $\gamma$
 & $\bar{\bnu}_h$      & outward unit normal of $\bar{\Gamma}_h$\\
$d$                 & signed distance function
 & $\bar{\mathcal{T}}_h$ & set of faces of $\bar{\Gamma}_h$\\
$\bp$               & closest point projection
 & $\bar{\bPi}_h$      & tangential projection with respect to $\bar{\Gamma}_h$\\
${\bf H}$           & Weingarten map
 & $\hat{K}$           & reference triangle\\
$\bPi$              & tangential projection w.r.t.\ $\gamma$
 & $F_{\bar{K}}$       & affine diffeomorphism\\
${\bm H}_T^1$       & $\bH^1$ tangential vector fields
 &                   & \\[6pt]
\multicolumn{2}{l}{\emph{High-order surface}} 
 & \multicolumn{2}{l}{\emph{Pullbacks}} \\[3pt]
$\Gamma_{h,k}$      & $O(h^{k+1})$ approximation to $\gamma$
 & $\bv^e$             & extension of $\bv$\\
$\mathcal{T}_{h,k}$ & triangulation of $\Gamma_{h,k}$
 & $\bv^\ell$          & lift of $\bv$\\
$\bPsi$             & polynomial diffeomorphism $\bar{\Gamma}_h \to \Gamma_{h,k}$
 & $\mathcal{P}_{\bPhi}$ & Piola transform w.r.t.\ $\bPhi$\\
$\ba_K$             & polynomial diffeomorphism
 & $\pt\bv = \mathcal{P}_{\bp}\bv$ & Piola transform w.r.t.\ $\bp$\\
$\mathcal{N}_{h,k}$ & Lagrange nodes of $\mathcal{T}_{h,k}$
 & $\ipt\bv = \mathcal{P}_{\bp^{-1}}\bv$ & Piola transform w.r.t.\ $\bp|_{\Gamma_{h,k}}^{-1}$\\
\end{tabular*}
}
\end{table}

\section{\rev{Preliminary Results}}\label{sec-2}

The signed distance function of $\gamma$ is denoted by $d$, which is well-defined in a tubular region of $\gamma$, denoted by $U_\delta$.
We set $\bnu = \nab d$, which is the extension of the outward unit normal of $\gamma$.
 The Weingarten map (or shape operator) is denoted by ${\bf H}(x) = D^2 d = \nab \bnu$,
 and the principal curvatures of $\gamma$ $\{\kappa_1,\kappa_2\}$ are the eigenvalues of ${\bf H}$, whose
 corresponding eigenvectors are orthogonal to $\bnu$.
Set $\bPi = {\bf I} - \bnu\otimes \bnu$ to be the tangential projection (where ${\bf I}$ denotes the $3\times 3$ identity matrix) 
and set the closest point projection $\bp:U_\delta\to \gamma$ as
$\bp(x) = x -d(x)\bnu(x)$. The space of $H^1$ tangential vectorfields is given by
\[
\bH^1_T(\gamma) = \{\bv\in \bH^1(\gamma):\ \bv\cdot \bnu = 0\}.
\]


\subsection{Triangulations}
We let $\bar \Gamma_h$ be a polyhedral surface approximation of $\gamma$
with simplicial faces such that $d$ is well defined on $\bar \Gamma_h$ with
$d(x) = \calO(h^2)$ for $x\in \rev{\bar \Gamma_h}$.  Let $\bar \calT_h$
 be the set of faces, and denote by $\bar \bnu_h$ the outward unit normal
of $\bar \Gamma_h$. The tangential projection operator with respect to \rev{$\bar \Gamma_h$} is given by
$\bar \bPi_h = {\bf I}-\bar \bnu_h\otimes \bar \bnu_h$.

Let $\hat K\subset \bbR^2$ be the reference triangle
with vertices $(1,0),(0,1),(0,0)$, and let $F_{\bar K}:\hat K\to \bar K$
be an affine diffeomorphism. We assume that $\bar \calT_h$ is shape-regular
in the sense that the following estimates are satisfied
\begin{subequations}
\label{eqn:FbarEst}
\begin{equation}
\|\nab F_{\bar K}\|_{L_\infty(\hat K)}\lesssim h_{\bar K},\qquad \det(\nab F_{\bar K}^\intercal \nab F_{\bar K}) \approx h_{\bar K}^4,
\end{equation}
where $h_{\bar K} = {\rm diam}(\bar K)$, and $\nab F_{\bar K}\in \bbR^{3\times 2}$ is the Jacobian of $F_{\bar K}$.
Here, we use
$a \lesssim b$ (resp., $a\gtrsim b$) to mean  there exists a generic constant $C>0$ independent
of the discretization parameter $h$ such that $a\le C b$ (resp., $a\ge C b$). 
The statement $a\approx b$ means $a\lesssim b$ and $a\gtrsim b$.

Since $(\nab F_{\bar K}^\intercal \nab F_{\bar K})^{-1} 
= {\rm det}(\nab F_{\bar K}^\intercal \nab F_{\bar K})^{-1}{\rm adj}(\nab F_{\bar K}^\intercal \nab F_{\bar K})$,
where ${\rm adj}(\cdot)$ denotes the matrix adjugate,
the above estimates yield
\begin{equation}
 \|(\nab F_{\bar K}^\intercal \nab F_{\bar K})^{-1}\|_{L_\infty(\hat K)}\lesssim h^{-2}_{\bar K}.
\end{equation}
\end{subequations}

We now define a family of high-order discrete surfaces $\Gamma_{h,k}$, $k \in \bbN$.  For $k=1$ we take 
$\Gamma_{h,1}=\bar \Gamma_h$ to be the affine surface approximation defined previously.  We assume
there exists  a continuous map $\bPsi: \bar \Gamma_h \rightarrow \mathbb{R}^3$ such that 
$\bPsi_{\bar K}:=\bPsi |_{\bar K} \in [\mathbb{P}_k(\bar K)]^3$, $\bar K \in \calT_h$ and satisfies
\begin{equation}\label{eqn:HighOrderGeometry}
|\bPsi(x)-\bp(x)|\lesssim h^{k+1},\quad \text{and}\quad |\nab \bPsi(x)-\nab \bp(x)|\lesssim h^k.
\end{equation}  
Let $\Gamma_{h,k}=\bPsi(\bar \Gamma_h)$ be the high-order surface approximation to $\gamma$,
and set $\calT_{h,k}= \{\bPsi(\bar{K}): \bar{K} \in \bar{\calT}_h\}$ to be the associated
triangulation of $\Gamma_{h,k}$.  The set of edges of $\bar \calT_h$ is denoted by $\bar \calE_h$,
and the set of edges of $\calT_{h,k}$ is $\calE_{h,k} = \{\Psi(\bar e):\ \bar e\in \bar \calE_h\}$.
We denote by $\bnu_h$ the outward unit normal
of $\Gamma_{h,k}$, and use the notation $\bnu_K = \bnu_h|_K$ for $K\in \calT_{h,k}$.
It follows from \eqref{eqn:HighOrderGeometry} that
\begin{equation}\label{eqn:normalApprox}
|\bnu- \bnu_h|\lesssim h^k.
\end{equation}
Similar to above, we set the tangential projection operator
with respect to $\Gamma_{h,k}$ as $\bPi_h = {\bf I}-\bnu_h\otimes \bnu_h$.
For a point on the discrete surface $a\in \Gamma_{h,k}$, we denote
by $\calT_a\subset \calT_{h,k}$ the set of faces containing $a$, i.e.,
$\calT_a = \{K\in \calT_{h,k}:\ a\in {\rm cl}(K)\}$.  We further define
\[
\calT_K = \{K'\in \calT_{h,k}:\ {\rm cl}(K')\cap {\rm cl}(K)\neq \emptyset\},\qquad \omega_K = \bigcup_{K\in \calT_K} K.
\]
Set $h_K = h_{\bar K}$, where $K = \bPsi(\bar K)$, and set $h = \max_{K\in \calT_{h,k}} h_K$.  We assume $\bar \calT_h$ is quasi-uniform in the sense that $h_K = h_{\bar K}\approx h$
for all $\bar {K}\in \bar \calT_h$.

The mapping $\bPsi$ may be viewed as an elementwise degree-$k$ polynomial mapping over $\bar \Gamma_h$, but concretely it is realized as a polynomial 
mapping from the reference element $\hat{K} \subset \mathbb{R}^2$.  In particular, we have
\[ 
\bPsi_{\bar{K}}= \ba_K \circ F_{\bar K}^{-1},
\]
where $\ba_K: \hat{K} \rightarrow K$ is a \rev{$k$th-degree polynomial diffeomorphism}
with $K = \bPsi(\bar K)\in \calT_{h,k}$.
We assume this polynomial mapping satisfies the following estimates.
\begin{equation}\label{eqn:akGood}
|\ba_K|_{W^{m}_\infty(\hat K)}\lesssim h^m,\qquad
{|\ba_K^{-1}|_{W^m_\infty( K)}\lesssim h^{-1}}\text{ ($1\le m\le (k+1))$},\quad
\det(\nab \ba_K^\intercal \nab \ba_K) \approx h^4,
\end{equation}
and note  these
 estimates imply  $L_\infty$ Sobolev norms of $\bPsi$ are bounded:
\begin{align}\label{eqn:bPsiHSBound}
|\bPsi|_{W^{m}_\infty(\bar K)} \lesssim |F_{\bar K}^{-1}|_{W^1_\infty(\bar K)}^{m} |\ba_K|_{W^{m}_\infty(\hat K)}\lesssim 1
\qquad \forall \bar K\in \bar \calT_h,\ \forall m\in \bbN.
\end{align}

We further assume that $F_{\bar K}(\hat a) = \ba_K(\hat a)$ for all vertices $\hat a$ of $\hat K$, i.e.,
$F_{\bar K}$ is the linear interpolant of $\ba_K$. It then follows
from the Bramble-Hilbert lemma and \eqref{eqn:akGood} that
\begin{equation}\label{eqn:baPert}
\|\ba_K - F_{\bar K}\|_{W^m_\infty(\hat K)}\lesssim h^2\quad \forall m\in \bbN_0:=\bbN\cup \{0\}.
\end{equation}

\begin{remark}
The estimates \eqref{eqn:HighOrderGeometry} and \eqref{eqn:akGood}
 hold if $\bPsi$ is the $k$th-degree Lagrange interpolant
of the closest-point projection $\bp$ \cite{Demlow09,Bernardi89}. 
However we do not necessarily assume this construction
in this paper. Rather, we assume  $\Gamma_{h,k}$ is a high-order approximation
to $\Gamma$ (in the sense of \eqref{eqn:HighOrderGeometry}) and
the mesh $\calT_{h,k}$ is shape-regular (in the sense of \eqref{eqn:akGood}).
\end{remark}

For $K\in \calT_{h,k}$, we let $K^\gamma = \bp(K)$ denote its image
on the exact surface $\gamma$ via the closest-point projection.
The collection of such elements
is $\calT_{h,k}^\gamma = \{K^\gamma:\ K\in \calT_{h,k}\}$.
We use the notation $\bv_K = \bv|_K$, the restriction of $\bv$ to $K$.
For a sub-mesh $\mathcal{D}_h\subset \calT_{h,k}$, 
we define the piecewise $H^m$-norms ($m\in \mathbb{N}_0$)
\[
\|\bv\|_{H^m_h(D_h)}^2 = \sum_{K\in \mathcal{D}_h} \|\bv\|_{H^m(K)}^2,\quad
\|\bv\|_{H^m_h(D^\gamma_h)}^2 = \sum_{K\in \mathcal{D}_{h}} \|\bv\|_{H^m(K^\gamma)}^2,
\]
where $D_h = \cup_{K\in \mathcal{D}_h} K$ and $D_h^\gamma = \bp(D_h)$.
We denote by $\bH^m_h(\gamma)$ the space of functions
satisfying $\|\bv\|_{H^m_h(\gamma)}<\infty$, \rev{where $\|\cdot\|_{H^m_h(\gamma)}$ 
is defined with respect to the partition $\calT_{h,k}^\gamma$.}

We end this section with a technical result
that will be used later on in Section \ref{sec-Consistency}.
\begin{lemma}
For an edge $e\in \calE_{h,k}$ (resp., $\bar e\in \bar \calE_h$), 
let $\bt_e$ (resp., $\bt_{\bar e}$) denote a unit tangent to $e$.
Then there holds for $h$ sufficiently small
\begin{equation}\label{eqn:Ptang}
\begin{split}
&|\nab \bp \bt_e|\gtrsim 1\text{ on every curved edge $e\in \calE_{h,k}$, and }\\
&|\nab \bPsi \bt_{\bar e}|\gtrsim 1\text{ on every affine edge $\bar e \in \bar \calE_h$.}
\end{split}
\end{equation}
\end{lemma}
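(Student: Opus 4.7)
The plan is to reduce both bounds to the observation $|\bPi \bt| \gtrsim 1$ whenever $\bt$ is a unit vector nearly orthogonal to $\bnu$, and then to show that $\nab \bp$ and $\nab \bPsi$ are close to $\bPi$ when applied to tangent vectors of the respective discrete surfaces.

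For the first claim I would start from the identity $\nab \bp = \bPi - d \bH$ valid on the tubular neighborhood $U_\delta$. Since $e \subset \Gamma_{h,k}$, the bound $|\bPsi - \bp| \lesssim h^{k+1}$ from \eqref{eqn:HighOrderGeometry} implies $|d| \lesssim h^{k+1}$ along $e$, so the boundedness of $\bH$ yields $|\nab \bp\, \bt_e - \bPi\, \bt_e| \lesssim h^{k+1}$. Because $\bt_e$ is tangent to a face of $\Gamma_{h,k}$ adjacent to $e$, we have $\bt_e \cdot \bnu_h = 0$ on that face, and \eqref{eqn:normalApprox} then gives $|\bt_e \cdot \bnu| = |\bt_e \cdot (\bnu - \bnu_h)| \lesssim h^k$. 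The pointwise identity $|\bPi\, \bt_e|^2 = 1 - (\bt_e \cdot \bnu)^2$ consequently yields $|\bPi\, \bt_e| \gtrsim 1$ for $h$ small, and combining this with the preceding display delivers the first bound.

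For the second claim the argument is structurally identical but with $\bnu_h$ replaced by $\bar \bnu_h$. Since $\bt_{\bar e}$ is tangent to an affine face meeting $\bar e$ and $|\bnu - \bar \bnu_h| \lesssim h$ (this is \eqref{eqn:normalApprox} with $k=1$ applied to $\bar \Gamma_h = \Gamma_{h,1}$), the same computation gives $|\bPi\, \bt_{\bar e}| \gtrsim 1$. To convert this into a bound on $\nab \bPsi\, \bt_{\bar e}$, I would invoke \eqref{eqn:HighOrderGeometry} to write $\nab \bPsi = \nab \bp + \calO(h^k)$, and combine it with $|d| \lesssim h^2$ on $\bar \Gamma_h$ and $\nab \bp = \bPi - d \bH$ to conclude $|\nab \bPsi\, \bt_{\bar e} - \bPi\, \bt_{\bar e}| \lesssim h$; the lower bound then follows for $h$ small.

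The main subtlety is notational rather than analytical: each edge lies on two faces whose outward normals need not agree, so one must observe that orthogonality of $\bt$ to the discrete normal on a single adjacent face is all that is needed, since this already places $\bt$ within $\calO(h^k)$ (respectively $\calO(h)$) of the tangent plane of $\gamma$. Once the distance and normal approximation estimates from \eqref{eqn:HighOrderGeometry} and \eqref{eqn:normalApprox} are in hand, the remaining content — that $\nab \bp$ and $\nab \bPsi$ preserve tangent-vector magnitudes up to higher-order corrections — reduces to a routine calculation.
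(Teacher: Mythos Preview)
Your proof is correct and uses the same core ingredients as the paper (the identity $\nab \bp = \bPi - d\bH$, the distance bound $|d|\lesssim h^{k+1}$, and the normal approximation \eqref{eqn:normalApprox}), but the execution differs slightly. The paper introduces the unit tangent $\bt_{\bp(e)}$ to the projected edge $\bp(e)\subset\gamma$, extends it off $\gamma$, notes $|\bt_e-\bt_{\bp(e)}|\lesssim h^k$, and exploits $\bPi\,\bt_{\bp(e)}=\bt_{\bp(e)}$ to obtain $|\nab\bp\,\bt_{\bp(e)}|\ge 1-Ch^{k+1}$ before perturbing back to $\bt_e$. You instead bypass the auxiliary vector entirely and compute $|\bPi\,\bt_e|^2=1-(\bt_e\cdot\bnu)^2$ directly from $\bt_e\cdot\bnu_h=0$ and $|\bnu-\bnu_h|\lesssim h^k$. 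Your route is marginally more elementary since it avoids constructing and extending $\bt_{\bp(e)}$ and the implicit claim $|\bt_e-\bt_{\bp(e)}|\lesssim h^k$; the paper's version is a bit more geometric and makes the role of the projected edge explicit. Both collapse to the same triangle inequality in the end.
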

\begin{proof}
Fix $e\in \calE_{h,k}$, and 
let $\bp(e)$ be the projected edge on $\gamma$
with unit tangent vector $\bt_{\bp(e)}$. We extend $\bt_{\bp(e)}$
to a neighborhood of $\gamma$ in the normal direction, 
and note that $|\bt_{\bp(e)}-\bt_e|\lesssim h^k$ on $e$.
Since $\nab \bp \bt_{\bp(e)} = (\rev{\bPi}-d{\bf H})\bt_{\bp(e)} = \bt_{\bp(e)} +\calO(h^{k+1}),$
we have
\[
|\nab \bp \bt_e| \ge |\nab \bp \bt_{\bp(e)}|-|\nab \bp| |\bt_{\bp(e)} - \bt_e| \ge 1 - C h^k \gtrsim 1
\]
for $h$ sufficiently small.

Likewise, for a fixed $\bar e\in \bar \calE_h$, let $\bp(\bar e)$ be the projected edge
on $\gamma$ with unit tangent $\bt_{\bp(\bar e)}$. Then by \eqref{eqn:HighOrderGeometry} and $|\bt_{\bar e}-\bt_{\bp(\bar e)}|\lesssim h$, we have
\[
|\nab \bPsi \bt_{\bar e}| 
\ge |\nab \bp \bt_{\bp(\bar e)}| - |\nab \bp| |\bt_{\bp(\bar e)}-\bt_{\bar e}| - |\nab \bPsi-\nab \bp| |\bt_{\bar e}| \ge 1-C h \gtrsim 1.
\]
\end{proof}

\subsection{Extensions and Lifts}

For a (scalar or vector-valued)  function $w$ defined on the exact surface $\gamma$,
its extension $w^e$ is given by $w^e(x) = \rev{w(\bp(x))}$ for $x\in U_\delta$.
On the other hand, for a function $w$ defined on either $\bar \Gamma_h$
or $\Gamma_{h,k}$, we let $\tilde w$ satisfy $\tilde w(x) = w(\bq(x))$ for all $x\in \gamma$,
where $\bq$ satisfies $\bp(\bq(x)) = x$. We then define the lift $w^\ell(x) = \tilde w( \bp(x))$ 
for $x\in U_\delta$.

Setting $\mu_h:\Gamma_{h,k}\to \bbR$ and $\bar \mu_h:\bar \Gamma_h\to \bbR$ as
\[
\mu_h  = \bnu\cdot \bnu_h(1-d \kappa_1)(1-d\kappa_2),\qquad \bar \mu_h  = \bnu\cdot \bar \bnu_h(1-d \kappa_1)(1-d\kappa_2),
\]
we have \cite{Demlow09}
\[
\int_{\Gamma_{h,k}} w^e \mu_h =  \int_{\bar \Gamma_h} w^e \bar \mu_h = \int_{\gamma} w,\qquad \forall w\in L_1(\gamma).
\]
We also have the following change of variables
result with respect to integration across edges. 
Its proof is given in the appendix.
\begin{lemma}\label{lem:ChangeVarE}
Let edge $\bar e$ and $e$ be edges in $\bar \calT_h$ and $\calT_{h,k}$, respectively, with $e = \bPsi(\bar e)$,
and let $ \bt_{\bar e}$ denote a unit tangent vector to $\bar e$.
There holds
\begin{align*}
\int_e q 
& = \int_{\bar e} \mu_e q\circ \bPsi\qquad \forall q\in L_1(e),
\end{align*}
where $\mu_e = |\nab \bPsi \bt_{\bar e}|.$
\end{lemma}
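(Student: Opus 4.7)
The plan is to reduce the statement to the classical change of variables formula for line integrals over a smooth curve by building an explicit parametrization of the curved edge $e$ via the polynomial mapping $\bPsi$. Since $\bar e$ is an affine edge, its unit tangent $\bt_{\bar e}$ is constant along $\bar e$, which makes the computation of the induced line element on $e$ particularly clean.

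First, I would parametrize $\bar e$ by arc length: pick an endpoint $x_0$ of $\bar e$ and define $\bgamma:[0,|\bar e|]\to \bar e$ by $\bgamma(s)=x_0 + s\,\bt_{\bar e}$, so that $\bgamma'(s)=\bt_{\bar e}$ and $\int_{\bar e} g = \int_0^{|\bar e|} g(\bgamma(s))\,ds$ for any $g\in L_1(\bar e)$. Next, I would compose to obtain a parametrization of the curved edge: define $\bPhi:=\bPsi\circ\bgamma:[0,|\bar e|]\to e$. Because $\bPsi_{\bar K}\in[\mathbb{P}_k(\bar K)]^3$, $\bPhi$ is smooth, and by the chain rule
\begin{equation*}
\bPhi'(s)=\nab\bPsi(\bgamma(s))\,\bgamma'(s)=\nab\bPsi(\bgamma(s))\,\bt_{\bar e},
\end{equation*}
so $|\bPhi'(s)|=|\nab\bPsi(\bgamma(s))\,\bt_{\bar e}|=\mu_e(\bgamma(s))$. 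Injectivity of $\bPhi$ follows from the fact that $\bPsi$ is a homeomorphism from $\bar\Gamma_h$ onto $\Gamma_{h,k}$ (and in particular a bijection between $\bar e$ and $e$), while \eqref{eqn:Ptang} together with the estimate $|\nab\bPsi-\nab\bp|\lesssim h^k$ from \eqref{eqn:HighOrderGeometry} guarantees $|\bPhi'(s)|\gtrsim 1$ for $h$ small, so $\bPhi$ is a regular parametrization.

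With $\bPhi$ in hand, the classical change of variables for line integrals yields
\begin{equation*}
\int_e q = \int_0^{|\bar e|} q(\bPhi(s))\,|\bPhi'(s)|\,ds = \int_0^{|\bar e|} (q\circ\bPsi)(\bgamma(s))\,\mu_e(\bgamma(s))\,ds = \int_{\bar e} \mu_e\,q\circ\bPsi,
\end{equation*}
which is the claimed identity. Density of smooth functions in $L_1(e)$ extends the formula from continuous $q$ to arbitrary $q\in L_1(e)$ in the usual way.

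I do not anticipate a genuine obstacle here: the argument is essentially the one-dimensional analogue of the standard area formula used in the bulk change-of-variables identity that precedes this lemma. The only point requiring minor care is verifying that $\bPhi$ is a bijective $C^1$ parametrization of $e$ with nonvanishing speed, and this is precisely what the shape-regularity bounds \eqref{eqn:akGood} and the tangential nondegeneracy \eqref{eqn:Ptang} (applied on the affine side through $\nab\bPsi\,\bt_{\bar e}$) provide.
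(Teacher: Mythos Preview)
Your proof is correct and follows essentially the same approach as the paper: parametrize $\bar e$, push the parametrization forward through $\bPsi$, and apply the chain rule together with the one-dimensional change-of-variables formula. The only cosmetic difference is that you use the arc-length parametrization of $\bar e$ (so $|\bgamma'|=1$), whereas the paper allows an arbitrary parametrization $r:[0,1]\to\bar e$ and carries the factor $|r'(t)|$ through the computation; your additional remarks on injectivity and nonvanishing speed of $\bPhi$ are a welcome bit of care that the paper leaves implicit.
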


\subsection{Piola Transforms}
We follow the setup and notation in \cite{Steinmann08,CD16,SurfaceStokes1,DemlowNeilan23}
to describe the Piola transform between two surfaces in $\bbR^3$.
Let $\bPhi:\calS_0\to \calS_1$ be a diffeomorphism 
between two surfaces $\calS_0$ and $\calS_1$.
 Let $d\sigma_i$ be
the surface measure of $\calS_i$,
and let $\mu$  satisfy $\mu d\sigma_0 = d\sigma_1$.
The Piola transform of a vectorfield $\bv:\calS_0\to \bbR^3$ 
with respect to $\bPhi$
is given by 
\[
(\calP_{\bPhi} \bv)\circ \bPhi= \mu^{-1} \nab\bPhi \bv,
\]
whereas the Piola transform for a vectorfield $\bv:\calS_1\to \bbR^3$
with respect to the inverse mapping $\bPhi^{-1}$ is given by 
\[
(\calP_{\bPhi^{-1}} \bv)\circ \bPhi^{-1} = (\mu\circ \bPhi^{-1}) \nab \bPhi^{-1} \bv.
\]
The mappings $\calP_{\bPhi}:\bH({\rm div}_{\calS_0};\calS_0)\to \bH({\rm div}_{\calS_1};\calS_1)$
and $\calP_{\bPhi^{-1}}:\bH({\rm div}_{\calS_1};\calS_1)\to \bH({\rm div}_{\calS_0};\calS_0)$
are bounded, and in particular, satisfy
${\rm div}_{\calS_0} \bv = \mu {\rm div}_{\calS_1} \calP_{\bPhi}  \bv$ for all $\bv\in \bH({\rm div};\calS_0).$

{\begin{remark}
In what follows, we make a slight abuse of notation
and treat the symbol of the Piola transform $\calP_{(\cdot)}$ as both an operator 
and a matrix. For example, we write $(\calP_{\bPhi} \bv)\circ \bPhi(x) = \calP_{\bPhi}(x) \bv(x)$,
where on the left-hand side $\calP_{\bPhi} \bv:\calS_1\to \bbR^3$, whereas on 
the right-hand side, both $\calP_{\bPhi}= \mu^{-1}\nab \bPhi$ and $\bv$ are functions defined on $\calS_0$.
This leads to (admittedly unusual) statements such as $\|\calP_{\bPhi}\bv\|_{L_p(\calS_1)} = \|\mu^{1/p} \calP_{\bPhi} \bv\|_{L_p(\calS_0)}$.
\end{remark}}

In the case $\bPhi = \bp$, $\calS_0 = \Gamma_{h,k}$,
and $\calS_1 = \gamma$,
the Piola transform of $\bv:\Gamma_{h,k}\to \bbR^3$ with $\bPi_h \bv= \bv$
is  \cite{CD16,SurfaceStokes1,DemlowNeilan23}
\begin{equation}\label{eqn:Piola}
\pt \bv \circ \bp := \calP_{\bp}  \bv =  \frac1{\mu_h} \big[\bPi - d {\bf H}\big]  \bv,
\end{equation}
whereas the Piola transform of $\bv:\gamma\to \bbR^3$
with respect to the inverse $\bp^{-1}$ is given by
\begin{equation}\label{eqn:invPiola}
\ipt \bv := \calP_{\bp^{-1}}\bv = \mu_h \Big[{\bf I}- \frac{\bnu\otimes \bnu_h}{\bnu\cdot \bnu_h}\Big][{\bf I}-d {\bf H}]^{-1} (\bv \circ \bp). 
\end{equation}

Due to the smoothness of $\bp$ and local smoothness of $\bnu_h$, 
the function $\mu_h = \bnu\cdot \bnu_h \det({\bm I}-d {\bf H})$ is smooth
on each $K\in \calT_{h,k}$, and so is its reciprocal. In particular $|\mu_h^{-1}|_{W^m_\infty(K)}\lesssim 1$ 
and $\|\mu\|_{W^m_\infty(K)}\lesssim 1$ for $m\in \bbN_0$.
It thus follows that
\begin{equation}
\label{eqn:PiolaPBounds}
\|\calP_{\bp}\|_{W^m_\infty(K)}\lesssim 1,\quad \text{and}\quad \|\calP_{\bp^{-1}}\|_{W^m_\infty(K^\gamma)}\lesssim 1
\end{equation}
on each $K\in \calT_{h,k}$ and $K\in \calT_{h,k}^\gamma$, and so
\begin{align}\label{eqn:ugh}
\|\pt{\bv}\|_{W^{m}_p(K^\gamma)}\approx  \|\bv\|_{W^{m}_p(K)}\qquad \forall \bv\in {\bm W}_p^m(K),\ \forall K\in \calT_{h,k},\ m\in \mathbb{N}_0,\ p\in [1,\infty],
\end{align}
with $\pt{\bv} = \calP_{\bp} \bv$.

The proofs of the following three lemmas are found in the appendix.
\begin{lemma}\label{lem:xPert}
Assume \eqref{eqn:FbarEst}--\eqref{eqn:baPert}, and let $\bar K\in \calT_h,\ K\in \calT_{h,k}$.
Then there holds
\begin{equation}
\begin{aligned}
|\calP_{\bPsi_{\bar K}} \bx - \bx|\lesssim h |\bx|\qquad &&\text{for all $\bx$ tangent to $\bar K$,}\\
|\calP_{\bPsi^{-1}_{K}} \bx - \bx|\lesssim h |\bx|\qquad &&\text{for all $\bx$ tangent to $K$.}
\end{aligned}
\end{equation}
\end{lemma}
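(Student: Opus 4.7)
The plan is to exploit the factorization $\bPsi_{\bar K} = \ba_K \circ F_{\bar K}^{-1}$ together with the quadratic smallness of $\ba_K - F_{\bar K}$ from \eqref{eqn:baPert}. Any $\bx$ tangent to $\bar K$ can be written as $\bx = \nabla F_{\bar K}\hat \bx$ for some $\hat \bx \in \bbR^2$, and \eqref{eqn:FbarEst} implies $|\hat \bx| \lesssim h^{-1}|\bx|$ (the eigenvalues of $\nabla F_{\bar K}^\intercal \nabla F_{\bar K}$ being $\approx h^2$). Differentiating the identity $\bPsi_{\bar K} \circ F_{\bar K} = \ba_K$ gives $\nabla \bPsi_{\bar K} \bx = \nabla \ba_K \hat \bx$, and then
\[
\nabla \bPsi_{\bar K} \bx - \bx = \nabla(\ba_K - F_{\bar K}) \hat \bx,
\]
which is $\lesssim h^2 \cdot h^{-1}|\bx| = h|\bx|$ by \eqref{eqn:baPert}.

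A similar expansion yields $\mu = 1 + \calO(h)$: writing $\nabla \ba_K^\intercal \nabla \ba_K = \nabla F_{\bar K}^\intercal \nabla F_{\bar K} + E$ with $|E| \lesssim h\cdot h^2 = h^3$ (each summand of $E$ contains a factor of $\nabla(\ba_K - F_{\bar K})$), one gets $\det(I + (\nabla F_{\bar K}^\intercal \nabla F_{\bar K})^{-1} E) = 1 + \calO(h)$ using $|(\nabla F_{\bar K}^\intercal \nabla F_{\bar K})^{-1}| \lesssim h^{-2}$, so $\mu^{-1} = 1 + \calO(h)$. The decomposition $\calP_{\bPsi_{\bar K}}\bx - \bx = (\mu^{-1}-1)\bx + \mu^{-1}(\nabla \bPsi_{\bar K} \bx - \bx)$ then yields the first bound.

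For the second estimate I would avoid a direct recomputation and argue by the bijection property of the Piola transform: $\calP_{\bPsi_{\bar K}}$ maps tangent vectors of $\bar K$ bijectively to tangent vectors of $K$, with inverse $\calP_{\bPsi_{\bar K}^{-1}}$, so any $\bx$ tangent to $K$ equals $\calP_{\bPsi_{\bar K}}\bar \bx$ for a unique $\bar \bx$ tangent to $\bar K$, and $\calP_{\bPsi_{\bar K}^{-1}}\bx = \bar \bx$. The first estimate gives $|\bx-\bar \bx| \le Ch|\bar\bx|$, so by the triangle inequality $|\bar\bx| \lesssim |\bx|$ for $h$ sufficiently small, and the conclusion follows. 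The delicate point is bookkeeping of tangent spaces---the vectors $\bx$ and $\calP_{\bPsi_{\bar K}}\bx$ live in different two-dimensional subspaces of $\bbR^3$---but the essential algebra is elementary once the chain rule reduces everything to combining the $h^2$ smallness of $\nabla(\ba_K - F_{\bar K})$ with the $h^{-1}$ growth of $|\hat \bx|/|\bx|$.
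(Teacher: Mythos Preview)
Your proposal is correct and follows essentially the same route as the paper. The paper writes out the full matrix expression $\calP_{\bPsi_{\bar K}} = \frac{\sqrt{\det \nabla F_{\bar K}^\intercal \nabla F_{\bar K}}}{\sqrt{\det \nabla \ba_K^\intercal \nabla \ba_K}}\,\nabla \ba_K (\nabla F_{\bar K}^\intercal \nabla F_{\bar K})^{-1}\nabla F_{\bar K}^\intercal$ and then sets $\ba_K = F_{\bar K} + \bell_k$, while you parametrize $\bx = \nabla F_{\bar K}\hat\bx$ and use the chain rule $\nabla\bPsi_{\bar K}\bx = \nabla \ba_K\hat\bx$; these are two ways of writing the same computation, and both rely on the same determinant estimate $\mu = 1+\calO(h)$ together with $\|\nabla(\ba_K - F_{\bar K})\|_{L_\infty} \lesssim h^2$ against $|\hat\bx| \lesssim h^{-1}|\bx|$. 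Your decomposition $\calP_{\bPsi_{\bar K}}\bx - \bx = (\mu^{-1}-1)\bx + \mu^{-1}(\nabla\bPsi_{\bar K}\bx - \bx)$ is a slightly cleaner way to organize the two error contributions. For the second inequality the paper does exactly what you do: apply the first estimate to $\bar\bx = \calP_{\bPsi_K^{-1}}\bx$ and absorb $|\bar\bx|$ into $|\bx|$.
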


\begin{lemma}\label{lem:PiolaOfa}
There holds for all $K\in \calT_{h,k}$, $\bar K\in \bar \calT_h$, and $m \in \mathbb{N}_0$
\begin{alignat}{2}
\label{eqn:PiolaOfa}
|\calP_{\ba_K}|_{W^m_\infty(\hat K)}&\lesssim h^{m-1},\qquad
&&
|(\calP_{\ba_K})^\dagger|_{W^m_\infty(\hat K)}\lesssim h^{1+m},\\
\label{eqn:PiolaBounds}
\|\calP_{\bPsi}\|_{W^m_\infty(\bar K)}&\lesssim 1, \qquad &&\|\calP_{\bPsi^{-1}}\|_{W^m_\infty(K)}\lesssim 1,
\end{alignat}
where $(\calP_{\ba_K})^\dagger$ is the pseudo-inverse of $\calP_{\ba_K}$. Consequently 
if $\bw\in \bH^m(K)$ and $\hat \bw\in \bH^m(\hat K)$ are related via $\bw = \calP_{\ba_K} \hat \bw$, then there holds
\begin{align}\label{eqn:ScalingOfPiolaA}
 |{\bw}|_{H^m(K)}\lesssim \sum_{\ell=0}^m h^{-\ell} |\hat \bw|_{H^\ell(\hat K)},\qquad |\hat \bw|_{H^m(\hat K)}\lesssim h^m\|{\bw}\|_{H^m(K)}.
\end{align}
Moreover, for all $\bar \bv\in \bH^m(\bar K)$ there holds
\begin{align}
\label{eqn:PhiPiolaNormEquiv}
\|\calP_{\bPsi} \bar \bv\|_{H^m(K)}\approx \|\bar \bv\|_{H^m(\bar K)},\quad K = \bPsi(\bar K).
\end{align}
\end{lemma}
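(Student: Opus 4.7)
The plan is to reduce every bound to tracking $h$-powers in the explicit formulas $\calP_{\ba_K}=\mu_{\ba_K}^{-1}\nab\ba_K$ and $(\calP_{\ba_K})^\dagger = \mu_{\ba_K} G^{-1}\nab\ba_K^\intercal$, where $G=\nab\ba_K^\intercal\nab\ba_K$ and $\mu_{\ba_K}=\sqrt{\det G}$. First I would derive pointwise bounds on the scalar factors using the sharp scaling $|\ba_K|_{W^{n+1}_\infty(\hat K)}\lesssim h^{n+1}$ from \eqref{eqn:akGood}. Leibniz applied to the product $G = \nab \ba_K^\intercal \nab\ba_K$ gives $|G|_{W^n_\infty(\hat K)}\lesssim h^{n+2}$, and then applied to the $2\times 2$ determinant gives $|\det G|_{W^n_\infty(\hat K)}\lesssim h^{n+4}$. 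Combined with the lower bounds $\det G\approx h^4$ and $\mu_{\ba_K}\approx h^2$ from \eqref{eqn:akGood}, an induction on $n$ using the identities $\mu_{\ba_K}^2 = \det G$, $\mu_{\ba_K}\mu_{\ba_K}^{-1}=1$, and $GG^{-1}=I$ produces
\[
|\mu_{\ba_K}|_{W^n_\infty(\hat K)}\lesssim h^{n+2},\quad |\mu_{\ba_K}^{-1}|_{W^n_\infty(\hat K)}\lesssim h^{n-2},\quad |G^{-1}|_{W^n_\infty(\hat K)}\lesssim h^{n-2}.
\]

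With these bounds in hand, \eqref{eqn:PiolaOfa} follows by one further application of Leibniz to the factorizations of $\calP_{\ba_K}$ and $(\calP_{\ba_K})^\dagger$: the contributions telescope to $h^{m-1}$ and $h^{m+1}$, respectively. To establish \eqref{eqn:PiolaBounds}, I would exploit the decomposition $\bPsi = \ba_K\circ F_{\bar K}^{-1}$. Since $F_{\bar K}$ is affine with $\nab F_{\bar K}\approx h$, the surface measure of $\bar K$ also scales like $h^2$, so the Piola Jacobian factor $\mu_{\bPsi}\approx 1$, with all of its derivatives of order one, by the same kind of induction applied to $\nab\bPsi^\intercal \nab \bPsi$. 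Combined with \eqref{eqn:bPsiHSBound}, this yields $\|\calP_{\bPsi}\|_{W^m_\infty(\bar K)}\lesssim 1$. The bound on $\calP_{\bPsi^{-1}}$ proceeds analogously, using $\bPsi^{-1}=F_{\bar K}\circ \ba_K^{-1}$, the affinity of $F_{\bar K}$, and $|\ba_K^{-1}|_{W^n_\infty(K)}\lesssim h^{-1}$ to conclude $|\bPsi^{-1}|_{W^n_\infty(K)}\lesssim 1$ before invoking the same product expansion.

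The scaling estimates in \eqref{eqn:ScalingOfPiolaA} follow by writing $\bw = (\calP_{\ba_K}\hat\bw)\circ \ba_K^{-1}$, expanding $\partial_x^m\bw$ via the Fa\`a di Bruno formula as a linear combination of $\partial^\ell(\calP_{\ba_K}\hat\bw)$ evaluated at $\ba_K^{-1}$ times products of $\partial^{m_i}\ba_K^{-1}$ with $\sum m_i = m$, and then changing variables to $\hat K$ (contributing $\mu_{\ba_K}^{1/2}\approx h$ in the $L_2$ norm). Using $|\ba_K^{-1}|_{W^{m_i}_\infty(K)}\lesssim h^{-1}$ together with a Leibniz expansion of $|\calP_{\ba_K}\hat\bw|_{H^\ell(\hat K)}$ and $|\calP_{\ba_K}|_{W^i_\infty}\lesssim h^{i-1}$ yields the claimed inequality after collecting $h$-powers. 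The reverse direction follows symmetrically from $\hat\bw = (\calP_{\ba_K})^\dagger(\bw\circ\ba_K)$ together with the pseudoinverse bound in \eqref{eqn:PiolaOfa}. Finally, \eqref{eqn:PhiPiolaNormEquiv} is obtained the same way using \eqref{eqn:PiolaBounds}, with the $h$-powers now cancelling since $\bar K$ and $K$ have comparable size. The main technical obstacle is the inductive $W^n_\infty$ estimate for $\mu_{\ba_K}^{-1}$ and $G^{-1}$: these crucially exploit the sharp scaling $|\ba_K|_{W^m_\infty}\lesssim h^m$ (rather than a generic bound of order $h$) propagating through the product and determinant structure, so that each differentiation of the inverse picks up a factor of $h$ instead of $h^{-1}$ in the inductive step.
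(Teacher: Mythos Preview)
Your proposal is correct and follows essentially the same route as the paper's proof. The only cosmetic difference is that you obtain the bounds on $\mu_{\ba_K}$, $\mu_{\ba_K}^{-1}$, and $G^{-1}$ by induction on the product identities $\mu_{\ba_K}^2=\det G$, $\mu_{\ba_K}\mu_{\ba_K}^{-1}=1$, $GG^{-1}=I$, whereas the paper applies the Fa\`a di Bruno chain rule directly to $f\mapsto f^{1/2}$ and $f\mapsto f^{-1}$; similarly, for $\calP_{\bPsi}$ you work with $\mu_{\bPsi}^{-1}\nab\bPsi$ directly while the paper factors $\calP_{\bPsi}=\calP_{\ba_K}\calP_{F_{\bar K}^{-1}}$ and uses the already-proved \eqref{eqn:PiolaOfa}. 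Both variants track the same $h$-powers and lead to the same conclusions.
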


\begin{lemma}\label{lem:DefCons}
There holds  for $\bv\in \bH^1_T(K)$
\begin{equation}\label{eqn:DefChain}
|{\rm Def}_\gamma \pt{\bv} - ({\rm Def}_{\Gamma_{h,k}} \bv)\circ \bp^{-1}|\lesssim h^{k} \left(|(\nab_{\Gamma_{h,k}} \bv)\circ \bp^{-1}| +
|\bv\circ \bp^{-1}|\right) \quad \text{on }K^\gamma.
\end{equation}
\end{lemma}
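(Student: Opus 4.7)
The plan is to differentiate the Piola identity \eqref{eqn:Piola}, namely $\pt{\bv}(\bp(x))=\mu_h(x)^{-1}(\bPi(x)-d(x){\bf H}(x))\bv(x)$, in directions tangent to $\Gamma_{h,k}$ and compare the resulting expression for $\nabla_\gamma \pt{\bv}(\bp(x))$ with $\nabla_{\Gamma_{h,k}}\bv(x)$ using the pointwise geometric perturbations $|d|\lesssim h^{k+1}$, $|1-\mu_h|\lesssim h^{k+1}$ on $\Gamma_{h,k}$, and $|\bPi-\bPi_h|\lesssim h^k$ from \eqref{eqn:normalApprox}. Because ${\rm Def}_\gamma \bw=\tfrac12\bPi(\nabla_\gamma \bw+(\nabla_\gamma \bw)^\intercal)\bPi$ and analogously on $\Gamma_{h,k}$ with $\bPi_h$, it suffices to control the difference $\bPi(\nabla_\gamma\pt{\bv})\bPi-\bPi_h(\nabla_{\Gamma_{h,k}}\bv)\bPi_h$; the symmetrization then costs nothing.

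For $x\in K$, $\bs\in T_x\Gamma_{h,k}$, and $y=\bp(x)$, the chain rule (together with $\nabla\bp=\bPi-d{\bf H}$ and $(\nabla_\gamma\pt{\bv})\bnu=0$) yields
\[
(\nabla_\gamma \pt{\bv})(y)\bs=\mu_h^{-1}(\bPi-d{\bf H})(\nabla_{\Gamma_{h,k}}\bv)\bs+\partial_\bs\bigl[\mu_h^{-1}(\bPi-d{\bf H})\bigr]\bv.
\]
The first summand equals $\bPi(\nabla_{\Gamma_{h,k}}\bv)\bs$ up to $O(h^{k+1})$. The second summand is the delicate piece: the pointwise size of $\partial_\bs\bPi=-({\bf H}\bs)\otimes\bnu-\bnu\otimes({\bf H}\bs)$ is $O(1)$. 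After left-multiplication by $\bPi$, however, the component $\bnu\otimes({\bf H}\bs)$ is annihilated and the surviving term contracted with $\bv$ produces the scalar $\bnu\cdot\bv=(\bnu-\bnu_h)\cdot\bv=O(h^k)|\bv|$, since $\bnu_h\cdot\bv=0$ and $|\bnu-\bnu_h|\lesssim h^k$. The other derivatives $\partial_\bs \mu_h$ and $\partial_\bs d=\bnu\cdot\bs=(\bnu-\bnu_h)\cdot\bs$ are also $O(h^k)|\bs|$ by the same mechanism. Substituting $\bPi=\bPi_h+O(h^k)$ gives
\[
\bPi(\nabla_\gamma \pt{\bv})(y)\bs=\bPi_h(\nabla_{\Gamma_{h,k}}\bv)\bs+O(h^k)\bigl(|\nabla_{\Gamma_{h,k}}\bv|+|\bv|\bigr)|\bs|.
\]
I then promote $\bs$ to an arbitrary vector by writing $\bs=\bs_T+\alpha\bnu_h$ with $\bs_T\in T_x\Gamma_{h,k}$; on the right the extra term vanishes because $(\nabla_{\Gamma_{h,k}}\bv)\bnu_h=0$, while on the left the extra $\bPi(\nabla_\gamma\pt{\bv})\bnu_h=\bPi(\nabla_\gamma\pt{\bv})(\bnu_h-\bnu)$ is $O(h^k)(|\nabla_{\Gamma_{h,k}}\bv|+|\bv|)$ upon invoking \eqref{eqn:ugh} to bound $|\nabla_\gamma\pt{\bv}|$. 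Right-multiplying by $\bPi$ (resp.\ $\bPi_h$) and symmetrizing then produces the claimed bound.

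\textbf{Main obstacle.} The difficulty is that the naive product rule generates the $O(1)$-sized term $\partial_\bs \bPi\cdot\bv$ that does not shrink with $h$. The entire argument hinges on the observation that left-multiplication by $\bPi$ annihilates its $\bnu$-row portion, while the surviving piece contracts with $\bv$ through $\bnu\cdot\bv$, which is $O(h^k)$ precisely because $\bv$ is tangential with respect to $\bnu_h$ (not $\bnu$). Thus it is the $\bPi_h$-tangentiality of $\bv$, combined with $|\bnu-\bnu_h|\lesssim h^k$, that ultimately powers the cancellation and gives the required rate $h^k$ rather than $O(1)$.
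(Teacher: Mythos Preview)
Your proposal is correct and follows essentially the same approach as the paper: both differentiate the Piola identity $\pt{\bv}\circ\bp=\mu_h^{-1}(\bPi-d{\bf H})\bv$ via the chain rule and control the dangerous $O(1)$ product-rule terms by exploiting the cancellation $\bnu\cdot\bv=(\bnu-\bnu_h)\cdot\bv=O(h^k)|\bv|$ (equivalently $|\bPi_h\bnu|\lesssim h^k$) together with $|\partial_\bs\mu_h|\lesssim h^k|\bs|$ for $\bs$ tangent to $\Gamma_{h,k}$. The paper organizes the computation through the matrix ${\bf L}=\mu_h^{-1}(\bPi-d{\bf H})$ and an explicit expansion of $\bPi\,\nabla{\bf L}\,\bv\,\bPi_h$ and of $\nabla\mu_h$ (cf.~\eqref{eqn:JacobiFun}), whereas you work directionally, but the key cancellations are identical.
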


\section{Finite Element Spaces}\label{sec-3}
Recall in the Euclidean case, the Taylor--Hood pair takes the 
velocity space to be the vector-valued  Lagrange space of degree $r\ge 2$,
whereas the pressure space is the (scalar)  Lagrange space of degree $(r-1)$.
Analogously, on the affine surface $\bar \Gamma_h$ we let  $\bar Q_h$ be the space of continuous, piecewise  polynomials of degree $(r-1)$.
On the high-order surface approximation, we define the pressure space via standard composition:
\begin{equation}\label{eqn:THQhDef}
Q_h = \{q\in H^1(\Gamma_{h,k})\cap \mathring{L}_2(\Gamma_{h,k}):\ q\circ \bPsi|_{\bar K}\in \mathbb{P}_{r-1}(\bar K)\ \forall \bar K\in \bar \calT_h\},
\end{equation}
where $\mathbb{P}_{r-1}(\bar K)$ denotes the space of polynomials of degree $\le (r-1)$
with domain $\bar K$, and $\mathring{L}_2(\Gamma_{h,k})$ is the space of square-integrable
functions on $\Gamma_{h,k}$ with vanishing mean.
Note that $Q_h$ is equivalent
to the space of functions $q$ in $H^1(\Gamma_{h,k})\cap \mathring{L}_2(\Gamma_{h,k})$
such that $q\circ \ba_K|_{\hat K}\in \mathbb{P}_{r-1}(\hat K)$ for each $K\in \calT_{h,k}$.

Let $\bar \calN_h$ denote the set of degrees of freedom (DOFs) of the $r$th-degree Lagrange finite element
space on $\bar \calT_h$, i.e., $\bar \calN_h$ consists of (i) the vertices in $\bar \calT_h$;
(ii) $(r-1)$ points on each (open) edge $\bar e\in \bar \calE_h$; (iii) $\frac12(r-1)(r-2)$ points
in the interior of each $\bar K\in \bar \calT_h$, chosen such that they uniquely determine
a polynomial of degree $(r-3)$. The corresponding set of points are mapped to $\Gamma_{h,k}$ via $\bPsi$:
\[
\calN_{h,k} = \{\bPsi(\bar a):\ \bar a\in \rev{\bar \calN_h}\}.
\]
We also set for each $K\in \calT_{h,k}$, 
\[
\calN_K = \{a\in \calN_{h,k}:\ a\in {\rm cl}(K)\}
\]
to be the location of these degrees of freedom on $K$.
We make the mild assumption that these degrees
of freedom are the image of a set of reference DOFs under $\ba_K$, 
i.e., there exists a set $\calN_{\hat K}\subset {\rm cl}(\hat K)$
such that $\calN_K = \{\ba_K(\hat a):\ \hat a\in \calN_{\hat K}\}$.

For each $K\in \calT_{h,k}$, we define the local velocity space,
which is the space of  Piola-mapped polynomials of degree $r$:
\[
\bV(K) = \{\calP_{\ba_K} \hat \bv:\ \hat \bv\in [\mathbb{P}_r(\hat K)]^2\}.
\]
To piece together these local spaces and to define
the global velocity space, we require a definition along with a technical result.
\begin{definition}\label{def:MaK}
For each DOF $a\in \calN_{h,k}$ in the triangulation, 
we arbitrarily choose a single (fixed) face $K_a\in \calT_a$.
For $K\in \calT_a$, we define
$\calM_a^K:\bbR^3\to \bbR^3$ by
\begin{equation}\label{eqn:calMDef}
\calM_a^K \bx= \Big(\bnu_{K_a}(a) \cdot \bnu_K(a) \Big[ {\bf I} - \frac{\bnu_{K_a}(a) \otimes \bnu_{K}(a)}{\bnu_{K_a}(a)\cdot \bnu_{K}(a)}\Big]\Big) \bx,
\end{equation}
where we recall $\bnu_{K_a}$ and $\bnu_{K}$ are the outward unit normals of $K_a$ and $K$, respectively.
In particular, $\calM_a^K \bx$ is the Piola transform of $\bx$ with respect
to the inverse of the closest point projection onto the plane tangent to $K_a$ at $a$ \cite{DemlowNeilan23}.
\end{definition}

\begin{remark} While our algorithm functions by choosing a master element $K_a$ at each node $a$, 
it is more appropriately interpreted as assigning an approximate tangent plane at each node (cf.~Figure \ref{fig:dofs}).
The algorithm does not have access to the continuous normal $\bnu$ or equivalently the tangent plane to $\gamma$.  In essence, our algorithm functions by choosing a reasonable but somewhat arbitrary approximation to $\bnu$ at each node and then computing degrees of freedom based on this choice.  
In the absence of the actual tangent plane of $\gamma$,
there does not appear to be a canonical or ``best'' choice of approximate tangent plane for this purpose.  
\end{remark}

\begin{figure}
\begin{center}
\begin{tikzpicture}[line join = round, line cap = round,scale=2.25]

\coordinate(A) at (0,0,1.5);
\coordinate(B) at (0,0,-1.5);
\coordinate(C) at (2,-1,0);
\coordinate(D) at (-1.5,-0.5,0);

\coordinate(D) at (-2.5,-1,-0.5);
\coordinate(E) at (1.5,0,-0.75);

\coordinate(cent1) at ($(C)!0.5!(B)!0.33333333!(A)$);
\coordinate(cent2) at ($(D)!0.5!(B)!0.33333333!(A)$);
\coordinate(cent3) at ($(E)!0.5!(B)!0.33333333!(C)$);
\coordinate(norm1) at (0.447213595500000,0.894427191000000,0);
\coordinate(norm2) at (-0.316227766050000, 0.948683298150000,0);
\coordinate(norm3) at (0.408248290500000, 0.408248290500000,-0.816496581000000);

\draw[thick,fill=green,opacity=0.2] (A)--(B)--(C)--(A);

\draw[thick,fill=yellow,opacity=0.2] (A)--(B)--(D)--(A);

\draw[thick,fill=blue,opacity=0.2]  (B)--(C)--(E)--(B);

\node[below] at (B) {$a$};

 \node[inner sep = 0pt,minimum size=2.5pt,fill=black!100,circle] (n2) at (B) {};



\node at ($(cent3)+(0.2,-0.2)$) {$K_3$};
\node at ($(cent2)+(0.15,0)$) {$K_1$};
\node at ($(cent1)+(-0.2,0)$) {$K_2$};

\coordinate(t) at (0.9786040069128327, 0.14185302893081675, -0.149036626732259);

\coordinate(offset) at (0,0,0);
\coordinate(BO) at ($(B)+(offset)+(t)$);


\draw[->,very thick,magenta] 
  ($(B)+(offset)$) -- ($(BO)$)
  node[midway, above, yshift=1pt,black] { ${\bm t}_{\rm ref}$};

\coordinate(M1) at (0.7748791201606419, -0.387439560080321, 0.05087101482465431);
\coordinate(M2) at (0.821883420784295, 0.2739611402614317, -0.17748803448398376);
\coordinate(M3) at (0.5927175697169547, -0.426761269903828, 0.08297814990656333);

\draw[->,very thick] (B) -- ($(B)+(M1)$)
node[pos=0.85, below right, yshift=-8pt] { ${\bm t}_3 = ((\bnu_{\rm ref}\cdot \bnu_3) {\bf I}_3-\bnu_{\rm ref}\otimes \bnu_3){\bm t}_{\rm ref}$};

\coordinate(norm) at ($ 0.333333333*(norm1) + 0.333333333*(norm2) + 0.333333333*(norm3) $);

\coordinate(P1) at (-2.34138088, -0.32264498, -3.93594658);
\coordinate(P2) at ( 2.34138088, -1.31386322, -3.57646310);
\coordinate(P3) at ( 2.34138088,  0.32264498,  0.43594658);
\coordinate(P4) at (-2.34138088,  1.31386322,  0.07646310);

\filldraw[fill=magenta, fill opacity=0.15, draw=magenta, thick]
  (P1) -- (P2) -- (P3) -- (P4) -- cycle;

\draw[->,magenta,very thick] (B) -- ($(B)+0.9*(norm)$);
\node[above right] at ($(B)+0.9*(norm)$) { $\mathbf{\bnu}_{\mathrm{ref}}$};

\end{tikzpicture}
\caption{\label{fig:dofs}\footnotesize 
\rev{A visual description
of constructing tangent planes at each node
via the operator $\calM_a^K$. In the} \rev{construction of the finite element space, 
we take $\bnu_{\rm ref} = \bnu_{K_a}(a)$.}
}
\end{center}
\end{figure}

\begin{lemma}\label{lem:PiolaD} 
Fix $a\in \calN_{h,k}$ and let $\bu$ lie in the tangent plane of $\gamma$ at 
$\bp(a)$.  For $K\in \calT_a$, let $\ipt{\bu}_K = \calP_{\bp^{-1}} \bu|_K$ be the Piola transform of $\bu$ to $K$ via the inverse of the closest point projection.
Then 
\begin{align}
\label{piola_defect}
|\ipt{\bu}_K-\mathcal{M}_a^K \ipt \bu_{K_a}|
\lesssim h^{2k} |\ipt \bu_{K_a}| \le h^{2k} |\bu|.
\end{align} 
\end{lemma}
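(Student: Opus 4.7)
The plan is to evaluate both sides of \eqref{piola_defect} at the node $a$, using the explicit formulas \eqref{eqn:invPiola} for the inverse Piola transform and \eqref{eqn:calMDef} for $\calM_a^K$, and then to extract an algebraic cancellation that improves a naive $\calO(h^k)$ estimate to $\calO(h^{2k})$. The three geometric inputs I will use are: (i) $a \in \Gamma_{h,k}$, so \eqref{eqn:HighOrderGeometry} yields $|d(a)| \les h^{k+1}$ and hence $c := (1-d(a)\kappa_1)(1-d(a)\kappa_2) = 1 + \calO(h^{k+1})$; (ii) \eqref{eqn:normalApprox} provides $|\bnu(a) - \bnu_{K_a}(a)|, |\bnu(a) - \bnu_K(a)| \les h^k$; (iii) the exact tangency $\bu \cdot \bnu(a) = 0$ from the hypothesis.

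Abbreviating $\bnu = \bnu(a)$, $\bnu_1 = \bnu_{K_a}(a)$, $\bnu_2 = \bnu_K(a)$, and $\bu' = [\bI - d(a)\bH(a)]^{-1}\bu$ (so $\bu' = \bu + \calO(h^{k+1})|\bu|$), the formula \eqref{eqn:invPiola} together with $\bu\cdot\bnu = 0$ reduces at $a$ to
\[
\ipt\bu_{K_a}(a) = c\bigl[(\bnu\cdot\bnu_1)\bu' - (\bnu_1\cdot\bu')\bnu\bigr],
\qquad
\ipt\bu_K(a) = c\bigl[(\bnu\cdot\bnu_2)\bu' - (\bnu_2\cdot\bu')\bnu\bigr].
\]
With $\gamma_i := \bnu\cdot\bnu_i$, $\delta := \bnu_1\cdot\bnu_2$, $\alpha := \bnu_1\cdot\bu'$, and $\beta := \bnu_2\cdot\bu'$, directly expanding $\calM_a^K \ipt\bu_{K_a}(a) = \delta\,\ipt\bu_{K_a}(a) - \bnu_1\bigl(\bnu_2\cdot\ipt\bu_{K_a}(a)\bigr)$ from \eqref{eqn:calMDef} produces
\[
\calM_a^K \ipt\bu_{K_a}(a) - \ipt\bu_K(a) = c\bigl[(\delta\gamma_1 - \gamma_2)\bu' + (\beta - \delta\alpha)\bnu + (\gamma_2\alpha - \gamma_1\beta)\bnu_1\bigr].
\]

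I would then estimate each scalar coefficient separately. Using $\gamma_i = 1 - \tfrac12|\bnu-\bnu_i|^2$ and $\delta = 1 - \tfrac12|\bnu_1-\bnu_2|^2$, one has $\gamma_i, \delta = 1 + \calO(h^{2k})$, giving $\delta\gamma_1 - \gamma_2 = \calO(h^{2k})$. Setting $\alpha_0 := (\bnu_1-\bnu)\cdot\bu$ and $\beta_0 := (\bnu_2-\bnu)\cdot\bu$, which are both of size $\calO(h^k)|\bu|$ by tangency, one finds $\alpha = \alpha_0 + \calO(h^{k+1})|\bu|$ and $\beta = \beta_0 + \calO(h^{k+1})|\bu|$, so
\[
\beta - \delta\alpha = (\beta_0 - \alpha_0) + \calO(h^{2k+1})|\bu|, \qquad
\gamma_2\alpha - \gamma_1\beta = (\alpha_0 - \beta_0) + \calO(h^{2k+1})|\bu|.
\]
The decisive cancellation is then
\[
(\beta-\delta\alpha)\bnu + (\gamma_2\alpha-\gamma_1\beta)\bnu_1 = (\beta_0-\alpha_0)(\bnu-\bnu_1) + \calO(h^{2k+1})|\bu|,
\]
in which both factors on the right are $\calO(h^k)$, producing the full $\calO(h^{2k})|\bu|$ bound. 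Combined with the $(\delta\gamma_1-\gamma_2)\bu'$ term, this gives $|\ipt\bu_K(a) - \calM_a^K \ipt\bu_{K_a}(a)| \les h^{2k}|\bu|$, and the two inequalities in \eqref{piola_defect} follow from the norm equivalence $|\ipt\bu_{K_a}|\approx|\bu|$ supplied by the Piola bounds \eqref{eqn:PiolaPBounds}.

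The main obstacle is identifying the cancellation: each of $\alpha, \beta, \bnu-\bnu_1, \bnu-\bnu_2$ is individually only $\calO(h^k)$, so a naive triangle inequality would yield only $\calO(h^k)|\bu|$. The improvement to $\calO(h^{2k})$ requires observing that the $\calO(h^k)$ contributions multiplying $\bnu$ and $\bnu_1$ are equal and opposite, so that they regroup into the product $(\beta_0-\alpha_0)(\bnu-\bnu_1)$ of two independent $\calO(h^k)$ factors. This mechanism depends critically on $\bu$ being \emph{exactly} tangent to $\gamma$: an $\calO(h^k)$ error in the tangency of $\bu$ would inject an $\calO(h^k)|\bu|$ term into $\alpha$ and $\beta$ that would not cancel, and the lemma would fail at the claimed order.
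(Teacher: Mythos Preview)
Your proof is correct and supplies the details that the paper itself omits: the paper merely cites \cite[Lemma~2.5 and Remark~2.6]{DemlowNeilan23} for the case $k=1$ and asserts the argument generalizes, whereas you carry out the direct computation explicitly for general $k$. The mechanism you isolate---regrouping $(\beta-\delta\alpha)\bnu+(\gamma_2\alpha-\gamma_1\beta)\bnu_1$ into a product of two independent $\calO(h^k)$ factors---is exactly the structural reason the estimate improves from $\calO(h^k)$ to $\calO(h^{2k})$, and your remark that exact tangency $\bu\cdot\bnu=0$ is essential is on point.

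One minor bookkeeping inconsistency: you write $\alpha=\alpha_0+\calO(h^{k+1})|\bu|$ and then claim $\beta-\delta\alpha=(\beta_0-\alpha_0)+\calO(h^{2k+1})|\bu|$, which does not follow directly. In fact the stronger bound $\alpha-\alpha_0=\calO(h^{2k+1})|\bu|$ does hold, because $\alpha-\alpha_0=\bnu_1\cdot(\bu'-\bu)=(\bnu_1-\bnu)\cdot(\bu'-\bu)$ (using $\bnu\cdot{\bf H}^j\bu=0$), which is a product of $\calO(h^k)$ and $\calO(h^{k+1})$ factors. Alternatively, even with the weaker $\calO(h^{k+1})$ bound the final cancellation still works, since the same correction $\epsilon_\beta-\epsilon_\alpha$ appears with opposite sign in the two coefficients and thus regroups with $(\beta_0-\alpha_0)$ against $(\bnu-\bnu_1)$. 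Either way the conclusion stands.
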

\begin{proof}
The proof for the case $k=1$ is found in \cite[Lemma 2.5]{DemlowNeilan23}. The arguments
given there generalize to arbitrary $k\ge 1$ to yield \eqref{piola_defect}; see \cite[Remark 2.6]{DemlowNeilan23}.
\end{proof}

Motivated by Lemma  \ref{lem:PiolaD},
we define the global velocity space as
\begin{equation}\label{eqn:THVhDef}
\bV_h = \{\bv\in \bL_2(\Gamma_{h,k}):\ \bv_K\in \bV(K),\ \forall K\in \calT_{h,k}; \bv_K(a) = \calM_a^K(\bv_{K_a}(a))\ \forall K\in \calT_a,\ \forall a\in \calN_{h,k}\}.
\end{equation}

\begin{lemma}\label{lem:vn3Points}
Let $\bv\in \bV_h$, and 
for an edge $e$ with $e = \p K_1\cap \p K_2$ ($K_1,K_2\in \calT_{h,k}$),
set $\bv_j = \bv|_{K_j}$.  Let $\bn_j$ be the outward unit co-normal
of $\p K_j$ restricted to $e$. 
Then there holds
\[
(\bv_1\cdot \bn_1 + \bv_2 \cdot \bn_2)(a) = 0\qquad \forall {\rm cl}(e) \ni a\in \calN_{h,k}.
\]
\end{lemma}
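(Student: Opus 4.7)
The plan is to exploit a clean cross-product rewriting of $\calM_a^K$ that exhibits a hidden symmetry between $K_1$ and $K_2$ and makes the desired cancellation transparent at a single node. The entire argument is pointwise at $a$, so no regularity or integration by parts will be needed.

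First, I would reduce the claim to an algebraic identity. Because $\bv\in\bV_h$, the nodal compatibility condition in \eqref{eqn:THVhDef} forces $\bv_j(a)=\calM_a^{K_j}(\bx)$ for $j=1,2$, where $\bx:=\bv_{K_a}(a)$ is a single vector independent of $j$. The claim is therefore equivalent to
\[
\calM_a^{K_1}(\bx)\cdot\bn_1(a) + \calM_a^{K_2}(\bx)\cdot\bn_2(a) = 0.
\]

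Next, I would rewrite $\calM_a^{K_j}$ in cross-product form via the BAC-CAB identity $\ba\times(\bb\times\bc) = (\ba\cdot\bc)\bb-(\ba\cdot\bb)\bc$. Comparing this with the definition \eqref{eqn:calMDef} (after noting the scalar factor $\bnu_{K_a}(a)\cdot\bnu_{K_j}(a)$ cancels inside the bracket) yields
\[
\calM_a^{K_j}(\bx) = \bnu_{K_j}(a)\times\bigl(\bx\times\bnu_{K_a}(a)\bigr),\qquad j=1,2.
\]
Crucially the inner factor $\bx\times\bnu_{K_a}(a)$ is common to both triangles; only the outer $\bnu_{K_j}(a)\times(\cdot)$ carries the $j$-dependence. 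I would then express the outward co-normals in the parallel form $\bn_j(a) = \epsilon_j\,\bnu_{K_j}(a)\times\bt_e(a)$ for signs $\epsilon_j\in\{-1,+1\}$. Since $\bt_e(a)$ is tangent to $e\subset K_j$, we have $\bnu_{K_j}(a)\cdot\bt_e(a)=0$, and the Lagrange identity $(\ba\times\bb)\cdot(\bc\times\bw)=(\ba\cdot\bc)(\bb\cdot\bw)-(\ba\cdot\bw)(\bb\cdot\bc)$ collapses each term to a single scalar triple product:
\[
\calM_a^{K_j}(\bx)\cdot\bn_j(a) = \epsilon_j\,\det\bigl[\bx,\bnu_{K_a}(a),\bt_e(a)\bigr].
\]

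To conclude, the two terms sum to $(\epsilon_1+\epsilon_2)\det[\bx,\bnu_{K_a}(a),\bt_e(a)]$, which vanishes as soon as $\epsilon_1+\epsilon_2=0$. This sign relation is the sole genuinely geometric input: because $K_1$ and $K_2$ lie on opposite sides of the shared edge $e$ and inherit consistent orientations from the global outward normal $\bnu_h$ of $\Gamma_{h,k}$, the induced boundary orientations on $e$ are opposite, reversing the sign of the outward co-normal in the expression $\bn_j=\epsilon_j\,\bnu_{K_j}(a)\times\bt_e(a)$ between the two elements. The main (mild) obstacle is thus simply keeping the orientation bookkeeping straight, since $\bnu_{K_1}(a)\neq \bnu_{K_2}(a)$ on a curved discrete surface and the signs are convention dependent. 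Everything else reduces to routine vector algebra; notably, it is never necessary to use that $\bx$ is itself tangent to $K_a$.
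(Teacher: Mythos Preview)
Your proof is correct and takes a more elementary route than the paper.  The paper simply notes that $\calM_a^K$ is itself a Piola transform (from the tangent plane of $K_a$ at $a$ to that of $K$) and invokes the co-normal--preserving property of Piola transforms, deferring the details to \cite[Proposition~3.1]{DemlowNeilan23}.  You instead unwind this property directly via the BAC--CAB and Lagrange identities, exposing the common scalar triple product $\det[\bx,\bnu_{K_a}(a),\bt_e(a)]$ and reducing the claim to the orientation relation $\epsilon_1=-\epsilon_2$.  Your argument is fully self-contained and makes explicit precisely which geometric fact (consistency of the induced boundary orientations from the global normal $\bnu_h$) drives the cancellation; the paper's version is conceptually tidier within its Piola-transform framework but hides the computation in a reference.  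The two are, at bottom, the same identity, and your observation that tangency of $\bx$ to $K_a$ is never used is a nice bonus.
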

\begin{proof}
This result  follows from the (in-plane) normal-preserving properties
of the Piola transform. See \cite[Proposition 3.1]{DemlowNeilan23} for details.
\end{proof}

\begin{lemma}
There holds $\bV_h\subset \bH({\rm div}_{\Gamma_{h,k}};\Gamma_{h,k})$.
\end{lemma}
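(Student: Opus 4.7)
The plan is to show that for every interior edge $e = \p K_1 \cap \p K_2$ and every $\bv \in \bV_h$, the co-normal jump $\bv_1\cdot\bn_1 + \bv_2\cdot\bn_2$ vanishes identically on $e$. Once this is established, elementwise integration by parts against any smooth test function $\varphi$ on $\Gamma_{h,k}$ yields
\[
-\int_{\Gamma_{h,k}} \bv \cdot \nab_{\Gamma_{h,k}} \varphi = \sum_{K\in \calT_{h,k}} \int_K ({\rm div}_K \bv)\varphi,
\]
so the weak surface divergence coincides with the piecewise element divergence, which lies in $L_2$.

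First I would introduce a common parameterization of $e$. Because $\bPsi$ is continuous on $\bar\Gamma_h$, the restrictions $\bPsi|_{\bar K_1}$ and $\bPsi|_{\bar K_2}$ agree on the shared affine edge $\bar e = \bar K_1\cap \bar K_2$. Parameterize $\bar e$ affinely by $s\in [0,1]$ and set $\bPhi_e(s) = \bPsi(\bar e(s))$. Then, letting $\hat e_j\subset\p\hat K$ be the reference pre-image of $\bar e$ under $F_{\bar K_j}$ with its affine parameterization $\hat e_j(s)$, we have $\ba_{K_j}(\hat e_j(s)) = \bPhi_e(s)$ for both $j=1,2$; in particular $|\bPhi_e'(s)|$ is independent of $j$.

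Next I would invoke the standard Piola edge-flux identity. Writing $\bv_j = \calP_{\ba_{K_j}}\hat\bv_j$ with $\hat\bv_j\in [\bbP_r(\hat K)]^2$, the surface Piola divergence relation $\mu_{\ba_K}({\rm div}_K \bv)\circ\ba_K = {\rm div}\,\hat\bv$ combined with the divergence theorem on both $K$ and $\hat K$ gives
\[
\int_e \bv_j\cdot\bn_j\,ds_e = \int_{\hat e_j}\hat\bv_j\cdot\hat\bn_{\hat e_j}\,d\hat s,
\]
and localizing in $s$ (i.e., restricting to arbitrary sub-arcs) yields the pointwise identity
\[
(\bv_j\cdot\bn_j)(\bPhi_e(s))\,|\bPhi_e'(s)| = L_{\hat e_j}\,(\hat\bv_j\cdot\hat\bn_{\hat e_j})(\hat e_j(s)),
\]
where $L_{\hat e_j} = |\hat e_j'(s)|$ is the (constant) length of the reference edge. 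Summing over $j=1,2$ and using that $|\bPhi_e'(s)|$ does not depend on $j$,
\[
(\bv_1\cdot\bn_1 + \bv_2\cdot\bn_2)(\bPhi_e(s))\,|\bPhi_e'(s)| = L_{\hat e_1}(\hat\bv_1\cdot\hat\bn_{\hat e_1})(\hat e_1(s)) + L_{\hat e_2}(\hat\bv_2\cdot\hat\bn_{\hat e_2})(\hat e_2(s)).
\]
The right-hand side is a polynomial of degree $r$ in $s$. By Lemma~\ref{lem:vn3Points}, it vanishes at each of the $r+1$ nodes $a\in \calN_{h,k}\cap {\rm cl}(e)$, which correspond to $r+1$ distinct values of $s\in [0,1]$. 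A polynomial of degree $r$ with $r+1$ roots is identically zero, so $\bv_1\cdot\bn_1 + \bv_2\cdot\bn_2 \equiv 0$ on $e$.

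The only mildly delicate step is the pointwise Piola edge-trace identity with a common parameter: the key observation is that the speed factor $|\bPhi_e'(s)|$ of the curved edge is shared between the two elements, which ultimately rests on continuity of $\bPsi$ across $\bar e$ in $\bar\Gamma_h$. Once this is in hand, the remainder of the argument is counting zeros of a degree-$r$ polynomial, using the $r+1$ edge DOFs guaranteed by the Taylor--Hood construction.
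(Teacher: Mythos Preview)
Your proof is correct and follows essentially the same strategy as the paper: pull the co-normal jump back to a setting where it becomes a polynomial of degree $r$ in a single parameter, then invoke Lemma~\ref{lem:vn3Points} to obtain $r+1$ distinct zeros and conclude the polynomial vanishes identically. The only cosmetic difference is that you pull back to the reference element $\hat K$ via $\ba_{K_j}^{-1}$ and argue with the polynomial $L_{\hat e_1}(\hat\bv_1\cdot\hat\bn_{\hat e_1})+L_{\hat e_2}(\hat\bv_2\cdot\hat\bn_{\hat e_2})$ in $s$, whereas the paper pulls back to the affine surface $\bar\Gamma_h$ via $\calP_{\bPsi^{-1}}$ and argues with the polynomial $\bar\bv_1\cdot\bar\bn_1+\bar\bv_2\cdot\bar\bn_2$ on the straight edge $\bar e$; since $\bar e$ is itself affinely parameterized by $s$, the two viewpoints are equivalent. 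Your observation that the curved-edge speed factor $|\bPhi_e'(s)|$ is shared by both elements (continuity of $\bPsi$) plays the same role as the paper's use of the common function $\mu_e = |\nab\bPsi\,\bt_{\bar e}|$ from Lemma~\ref{lem:ChangeVarE}.
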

\begin{proof}
It suffices to show that co-normal components functions in $\bV_h$
are single-valued on the edges in $\calT_{h,k}$.

Let $\bv\in \bV_h$, set $\bar \bv = \calP_{\bPsi^{-1}} \bv$,
and let $\bar K\in \bar \calT_h$ and $K\in \calT_{h,k}$ be related
via $K = \bPsi(\bar K)$.
Writing $\bv_K :=\bv|_K =  \calP_{\ba_K} \hat \bv$ for some $\hat \bv\in [\mathbb{P}_r(\hat K)]^2$,
we have $\bar \bv_{\bar K} :=\bar \bv|_{\bar K} =  \calP_{\bPsi^{-1}} \calP_{\ba_K} \hat \bv = \calP_{F_{\bar K} \circ \ba_K^{-1}} \calP_{\ba_K} \hat \bv= \calP_{F_{\bar K}} \hat \bv$.
Thus $\bar \bv_K$ is a polynomial of degree $\le r$.

Let $e$ be an edge of $\calT_{h,k}$ and $\bar e$ an edge of $\bar \calT_h$
with $e = \bPsi(\bar e)$.  Write $e = \p K_1\cap \p K_2$ with $K_1,K_2\in \calT_{h,k}$,
and let $\bar K_1,\bar K_2\in \bar \calT_h$ be the corresponding
affine elements.  The normal-preserving properties
of the Piola transform read
\begin{align*}
\int_{e}  (\bv_j \cdot \bn_j)q = \int_{\bar e} (\bar \bv_j\cdot \bar \bn_j)\bar q\qquad \forall q\in L_2(e),
\end{align*} 
where $\bv_j = \bv|_{K_j},\ \bar\bv_j = \bar\bv|_{\bar K_j}$, and  $\bar q = q\circ \bPsi$.  
An application of Lemma \ref{lem:ChangeVarE} yields
\begin{align*}
\int_{\bar e} (\bar \bv_j\cdot \bar \bn_j) \bar q = \int_e (\bv_j \cdot \bn_j)q = \int_{\bar e} \mu_e (\bv_j \cdot \bn_j)\circ \bPsi \bar q\qquad \forall q\in L_2(e).
\end{align*}
Thus, $\bar \bv_j\cdot \bar \bn_j = \mu_e ( \bv_j\cdot  \bn_j)\circ \bPsi$ and so
\begin{align*}
\bv_1 \cdot \bn_1 + \bv_2 \cdot \bn_2 = \Big(\mu_e \big(\bar \bv_1\cdot \bar \bn_1+\bar \bv_2\cdot \bar \bn_2)\Big)\circ \bPsi^{-1}\qquad \text{on }e.
\end{align*}
Lemma \ref{lem:vn3Points} shows  that the left-hand side vanishes at $(r+1)$ distinct points on ${\rm cl}(e)$,
and therefore $\bar \bv_1\cdot \bar \bn_1+\bar \bv_2\cdot \bar \bn_2$ vanishes at $(r+1)$ points on $\bar e$.
Since $\bar \bv\cdot \bar \bn_j$ is a   polynomial of degree $\le r$ on $\bar e$,
we conclude $\bar \bv_1 \cdot \bar \bn_1 + \bar \bv_2 \cdot \bar \bn_2=0$ on $\bar e$.
Therefore $ \bv_1 \cdot  \bn_1 +  \bv_2 \cdot  \bn_2=0$ on $ e$, and
so $\bv\in \bH({\rm div}_{\Gamma_{h,k}};\Gamma_{h,k})$. 
\end{proof}

\subsection{Scaling and Approximation Properties of $\bV_h$}

In this section, we gather some scaling and approximation results
of the velocity space $\bV_h$, showing that it has similar
properties as the (isoparametric) Lagrange finite element space.
For example, we have the following inverse estimate.
\begin{lemma}
There holds for all $\bv\in \bV(K)$ and $m,s\in \mathbb{N}_0$ with $s\le \min\{r,m\}$,
\begin{align}\label{eqn:HOIE}
\|\bv\|_{W^{m}_\infty(K)}\lesssim h^{-1+s-\min\{r,m\}} \|\bv\|_{H^s(K)}\qquad \forall \bv\in \bV(K),\ \forall K\in \calT_{h,k}.
\end{align}
\end{lemma}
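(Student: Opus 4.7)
The plan is to pull $\bv$ back to the reference triangle via the Piola transform, use finite-dimensional norm equivalence on the fixed reference polynomial space $[\mathbb{P}_r(\hat K)]^2$, and translate the bound back to $K$ while carefully tracking the $h$-dependent constants supplied by Lemma~\ref{lem:PiolaOfa} and \eqref{eqn:akGood}. Writing $\bv = \calP_{\ba_K}\hat \bv$ with $\hat \bv\in[\mathbb{P}_r(\hat K)]^2$, a Leibniz-plus-Fa\`a di Bruno computation using $|\calP_{\ba_K}|_{W^i_\infty(\hat K)}\lesssim h^{i-1}$ from \eqref{eqn:PiolaOfa} and $|\ba_K^{-1}|_{W^n_\infty(K)}\lesssim h^{-1}$ from \eqref{eqn:akGood} should yield the Piola-adapted scaling
\[
|\bv|_{W^m_\infty(K)} \lesssim \sum_{\ell=0}^{\min\{m,r\}} h^{-\ell-1}\,|\hat \bv|_{W^\ell_\infty(\hat K)}
\]
(with the convention $|\hat \bv|_{W^0_\infty(\hat K)}=\|\hat \bv\|_{L_\infty(\hat K)}$; contributions with $\ell>r$ vanish because $\hat \bv$ has degree $\le r$). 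Finite-dimensional norm equivalence applied to $\nabla^\ell \hat \bv$ then replaces each $|\hat \bv|_{W^\ell_\infty(\hat K)}$ by $|\hat \bv|_{H^\ell(\hat K)}$ at no $h$-cost.

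To convert the right-hand side to $\|\bv\|_{H^s(K)}$, I split the sum at $\ell=s$. For $\ell\le s$, the forward seminorm scaling from \eqref{eqn:ScalingOfPiolaA}, namely $|\hat \bv|_{H^\ell(\hat K)}\lesssim h^\ell\|\bv\|_{H^\ell(K)}\le h^\ell\|\bv\|_{H^s(K)}$, cancels the $h^{-\ell}$ and gives a contribution of order $h^{-1}\|\bv\|_{H^s(K)}$. For $s<\ell\le r$, I use the reference-element reverse polynomial inverse $|\hat \bv|_{H^\ell(\hat K)}\lesssim |\hat \bv|_{H^s(\hat K)}$---valid because $|\cdot|_{H^s(\hat K)}$ is a norm on the finite-dimensional quotient $[\mathbb{P}_r(\hat K)]^2/[\mathbb{P}_{s-1}(\hat K)]^2$ and $|\cdot|_{H^\ell(\hat K)}$ is a continuous seminorm there---followed by $|\hat \bv|_{H^s(\hat K)}\lesssim h^s\|\bv\|_{H^s(K)}$. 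This contributes $\lesssim h^{s-1-\ell}\|\bv\|_{H^s(K)}$, whose maximum over $s<\ell\le\min\{m,r\}$ equals $h^{s-1-\min\{m,r\}}\|\bv\|_{H^s(K)}$. Adding the (already dominated) $L_\infty$ bound $\|\bv\|_{L_\infty(K)}\lesssim h^{-1}\|\bv\|_{L^2(K)}$ yields the claim.

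The main obstacle is the anomalous Piola scaling $|\calP_{\ba_K}|_{W^i_\infty(\hat K)}\lesssim h^{i-1}$, in which each derivative of $\calP_{\ba_K}$ carries an additional power of $h$ rather than being of a uniform order, together with the sharper-than-affine seminorm scaling $|\hat \bv|_{H^\ell(\hat K)}\lesssim h^\ell\|\bv\|_{H^\ell(K)}$ in \eqref{eqn:ScalingOfPiolaA}. Both $h$-dependent factors must be tracked term-by-term through the Leibniz expansion, and it is precisely their cancellation pattern at the splitting index $\ell=s$ that produces the exponent $-1+s-\min\{m,r\}$; collapsing the sum prematurely to a single full-norm bound on the reference element would lose a crucial factor of $h^s$.
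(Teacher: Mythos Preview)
Your proof is correct and follows essentially the same route as the paper: pull back via $\bv=\calP_{\ba_K}\hat\bv$, use the Leibniz/Fa\`a di Bruno bound $|\bv|_{W^m_\infty(K)}\lesssim\sum_{\ell=0}^{\min\{m,r\}}h^{-\ell-1}|\hat\bv|_{W^\ell_\infty(\hat K)}$ (cf.~\eqref{eqn:wbound}), apply finite-dimensional norm equivalence on $[\mathbb{P}_r(\hat K)]^2$ (including the quotient argument for $\ell>s$), and return to $K$ via \eqref{eqn:ScalingOfPiolaA}. The only cosmetic difference is that the paper splits the sum at $\ell\le s-1$ versus $\ell\ge s$ rather than your $\ell\le s$ versus $\ell>s$.
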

\begin{proof}
Write $\bv = \calP_{\ba_K} \hat \bv$ for some $\hat \bv\in [\mathbb{P}_r(\hat K)]^2$.
By similar arguments as in the proof of Lemma \ref{lem:PiolaOfa} (cf. \eqref{eqn:wbound}), we have for any $m\in \mathbb{N}$
\[
|\bv|_{W^{m}_\infty(K)}\lesssim h^{-1} \sum_{\ell=0}^{m}  h^{-\ell} |\hat \bv|_{W^\ell_\infty(\hat K)} = h^{-1} \sum_{\ell=0}^{\min\{r,m\}}  h^{-\ell} |\hat \bv|_{W^\ell_\infty(\hat K)}
\lesssim  \sum_{\ell=0}^{s-1} h^{-\ell-1} |\hat \bv|_{H^\ell(\hat K)}+h^{-\min\{r,m\}-1}|\hat \bv|_{H^s(\hat K)},
\]
where we used the fact that $\hat \bv$ is a polynomial of degree at most $r$ and equivalence of norms in the last two steps.
We then map back to $K$, using Lemma \ref{lem:PiolaOfa},
\[
|\bv|_{W^{m}_\infty(K)}\lesssim  h^{-1} \| \bv\|_{H^{s-1}(\hat K)}+h^{-1+s-\min\{r,m\}} \| \bv\|_{H^s(K)}\lesssim h^{-1+s-\min\{r,m\}}\|\bv\|_{H^s(K)}.
\]
\end{proof}

\begin{lemma}\label{lem:Scaling}
There holds
\begin{equation}
h^{m-1}\|\bv\|_{H^m(K)} \lesssim \sum_{a\in \calN_K} |\bv(a)|\qquad \forall \bv\in \bV(K),\ \forall K\in \calT_{h,k}\qquad m=0,1,\ldots,r.
\end{equation}
\end{lemma}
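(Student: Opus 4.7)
The plan is to reduce to a norm-equivalence statement on the reference element $\hat K$. Writing $\bv = \calP_{\ba_K}\hat \bv$ with $\hat \bv\in [\mathbb{P}_r(\hat K)]^2$, I expect the proof to factor cleanly into three steps: (i) scale from $K$ to $\hat K$ via the Piola bound \eqref{eqn:ScalingOfPiolaA}, (ii) invoke finite-dimensional norm equivalence on $[\mathbb{P}_r(\hat K)]^2$ to dominate the Sobolev norm by the pointwise values at the reference DOFs $\calN_{\hat K}$, and (iii) convert those reference pointwise values back to values of $\bv$ on $K$ using the pseudo-inverse bound in Lemma \ref{lem:PiolaOfa}.

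For step (i), \eqref{eqn:ScalingOfPiolaA} gives $\|\bv\|_{H^m(K)}\lesssim \sum_{\ell=0}^m h^{-\ell}|\hat \bv|_{H^\ell(\hat K)}$, and for $m\ge 1$ and $h$ sufficiently small this is bounded by $h^{-m}\|\hat \bv\|_{H^m(\hat K)}$ (the case $m=0$ needs no such absorption, reading directly as $\|\bv\|_{L_2(K)}\lesssim \|\hat \bv\|_{L_2(\hat K)}$). For step (ii), the set $\calN_{\hat K}$ is by construction unisolvent for $\mathbb{P}_r(\hat K)$ (three vertices, $(r-1)$ edge points, and $(r-1)(r-2)/2$ interior points chosen to determine $\mathbb{P}_{r-3}(\hat K)$), so with two components per node it gives exactly $(r+1)(r+2)=\dim[\mathbb{P}_r(\hat K)]^2$ linear functionals that separate $[\mathbb{P}_r(\hat K)]^2$. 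Norm equivalence on this finite-dimensional space then yields
\[
\|\hat \bv\|_{H^m(\hat K)}\lesssim \sum_{\hat a\in \calN_{\hat K}} |\hat \bv(\hat a)|.
\]
For step (iii), the pointwise identity $\bv(a)=\calP_{\ba_K}(\hat a)\,\hat \bv(\hat a)$ inverts, since $\bv(a)$ lies in the range of $\calP_{\ba_K}(\hat a)$, to $\hat \bv(\hat a)=(\calP_{\ba_K})^\dagger(\hat a)\,\bv(a)$. The bound $|(\calP_{\ba_K})^\dagger|_{L_\infty(\hat K)}\lesssim h$ from \eqref{eqn:PiolaOfa} then gives $|\hat \bv(\hat a)|\lesssim h|\bv(a)|$.

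Chaining these together produces $\|\bv\|_{H^m(K)}\lesssim h^{-m}\cdot h\sum_{a\in \calN_K}|\bv(a)|=h^{1-m}\sum_{a\in \calN_K}|\bv(a)|$, which is the claimed estimate. I do not anticipate a main obstacle: the only nontrivial ingredient is the unisolvence/norm-equivalence argument on $\hat K$, which is the standard Lagrange-type argument, and everything else is a direct application of Lemma \ref{lem:PiolaOfa}. One small point worth being careful about is that the pseudo-inverse bound controls $\hat \bv(\hat a)$ by $\bv(a)$ only because $\bv(a)$ already lies in the range of the (injective) linear map $\calP_{\ba_K}(\hat a):\mathbb{R}^2\to T_{a}K$, so the pseudo-inverse acts as a genuine left inverse on the relevant vectors; this is automatic from the definition of $\bV(K)$ but should be stated explicitly.
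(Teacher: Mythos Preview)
Your proposal is correct and follows essentially the same approach as the paper's proof: both write $\bv=\calP_{\ba_K}\hat\bv$, scale to $\hat K$ via \eqref{eqn:ScalingOfPiolaA}, invoke finite-dimensional norm equivalence on $[\mathbb{P}_r(\hat K)]^2$ against the nodal values $\{\hat\bv(\hat a)\}_{\hat a\in\calN_{\hat K}}$, and then convert back using $\hat\bv(\hat a)=(\calP_{\ba_K})^\dagger(\hat a)\bv(a)$ together with the bound $\|(\calP_{\ba_K})^\dagger\|_{L_\infty(\hat K)}\lesssim h$ from \eqref{eqn:PiolaOfa}. Your added remarks on unisolvence and on why the pseudo-inverse acts as a genuine left inverse are helpful clarifications but do not alter the argument.
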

\begin{proof}
Write $\bv = \calP_{\ba_K} \hat \bv$ with $\hat \bv\in [\mathbb{P}_k(\hat K)]^2$.
Applying Lemma \ref{lem:PiolaOfa} and equivalence of norms we have
\begin{align*}
|\bv|^2_{H^m( K)}\lesssim h^{-2m}\|\hat \bv\|^2_{H^m(\hat K)}\lesssim h^{-2m} \sum_{\hat a\in \calN_{\hat K}} |\hat \bv(\hat a)|^2.
\end{align*}
We then apply the estimate \eqref{eqn:PiolaOfa} to obtain
\begin{align*}
|\bv|^2_{H^m( K)}
\lesssim h^{-2m} \sum_{\hat a\in \calN_{\hat K}} |(\calP_{\ba_K})^\dagger \calP_{\ba_K} \hat \bv(\hat a)|^2
\lesssim h^{-2m} \|(\calP_{\ba_K})^\dagger\|_{L_\infty(\hat K)}^2  \sum_{\hat a\in \calN_{\hat K}}|\bv(a)|^2\le h^{-2m+2}
\sum_{\hat a\in \calN_{\hat K}}|\bv(a)|^2.
\end{align*}
\end{proof}

\begin{lemma}\label{lem:ApproxProps}
For  $\bw\in \bC(\gamma)\cap \bH^1_T(\gamma)\cap \bH^s_h(\gamma)\ (2\le s\le r+1)$, 
\rev{let $\ipt\bw = \calP_{\bp^{-1}} \bw$.}
Then there exists $\bI_h \ipt{\bw}\in \bV_h$ such that
\begin{align}\label{eqn:InterpolantEstimate}
h^m \|\ipt{\bw}-\bI_h \ipt{\bw}\|_{H^m(K)}\lesssim h^s \|\bw\|_{H^s(K^\gamma)}+h^{2k}\|\bw\|_{H^2(\omega_{K^\gamma})},\quad m=0,1,\ldots,r \hbox{ and } m \le s.
\end{align}
\end{lemma}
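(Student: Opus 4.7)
The plan is to construct $\bI_h\ipt{\bw}$ directly from the nodal rule characterizing $\bV_h$, compare it with an auxiliary elementwise Piola Lagrange interpolant $\bJ_h\ipt{\bw}$, and then estimate the two resulting pieces separately. For each $K\in \calT_{h,k}$, pull $\ipt{\bw}_K$ back to $\hat K$ via the pseudo-inverse of $\calP_{\ba_K}$ (which is invertible on the tangent plane of $K$, since $\ipt{\bw}_K$ is tangent to $K$), apply the standard $[\mathbb{P}_r(\hat K)]^2$ Lagrange interpolant at the reference nodes $\calN_{\hat K}$, and push forward by $\calP_{\ba_K}$ to obtain $(\bJ_h\ipt{\bw})|_K\in \bV(K)$. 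The normal-preserving property of the Piola map gives $(\bJ_h\ipt{\bw})_K(a)=\ipt{\bw}_K(a)$ at every $a\in \calN_K$, but $\bJ_h\ipt{\bw}$ is not in $\bV_h$ in general. The interpolant of the lemma is then defined elementwise by $(\bI_h\ipt{\bw})_K(a) := \calM_a^K\ipt{\bw}_{K_a}(a)$ for all $K\in \calT_a$, which by construction lies in $\bV_h$. The error splits as $\ipt\bw - \bI_h\ipt\bw = (\ipt\bw - \bJ_h\ipt\bw) + (\bJ_h\ipt\bw - \bI_h\ipt\bw)$.

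For the first (approximation) piece, I would pull back to $\hat K$ and invoke Bramble--Hilbert, together with the scaling inequalities \eqref{eqn:ScalingOfPiolaA} of Lemma \ref{lem:PiolaOfa} and the norm equivalence \eqref{eqn:ugh} for $\calP_{\bp}$, to obtain
\[
\|\ipt{\bw}-\bJ_h\ipt{\bw}\|_{H^m(K)}\lesssim h^{s-m}\|\bw\|_{H^s(K^\gamma)}.
\]
For the second (nonconformity) piece, $(\bJ_h\ipt{\bw}-\bI_h\ipt{\bw})|_K$ lies in $\bV(K)$ and has nodal values $\ipt{\bw}_K(a)-\calM_a^K\ipt{\bw}_{K_a}(a)$. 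Combining the scaling identity of Lemma \ref{lem:Scaling} with the Piola-defect estimate of Lemma \ref{lem:PiolaD} yields
\[
h^{m-1}\|\bJ_h\ipt{\bw}-\bI_h\ipt{\bw}\|_{H^m(K)} \lesssim h^{2k}\sum_{a\in \calN_K}|\ipt{\bw}_{K_a}(a)|.
\]
A scaled two-dimensional $H^2\hookrightarrow L_\infty$ embedding on the patch $\omega_{K^\gamma}$, together with \eqref{eqn:ugh}, bounds $\sum_a|\ipt{\bw}_{K_a}(a)|$ by $h^{-1}\|\bw\|_{H^2(\omega_{K^\gamma})}$. Multiplying both combined bounds by $h^m$ then gives the stated estimate.

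The delicate point lies entirely in the nonconformity piece: the scaled Sobolev embedding on a patch of diameter $h$ loses exactly one power of $h$, which must be precisely compensated by the extra factor of $h$ in the scaling inequality of Lemma \ref{lem:Scaling}, so that the final exponent $h^{2k}$ matches the $\calO(h^{2k})$ Piola defect of Lemma \ref{lem:PiolaD} without loss. The first piece is a routine adaptation of classical Piola-based Lagrange interpolation theory on isoparametric elements, so no significant obstruction is expected there.
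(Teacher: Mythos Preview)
Your proposal is correct and follows essentially the same approach as the paper: your auxiliary interpolant $\bJ_h\ipt{\bw}$ is precisely the paper's $\ipt{\bw}_I$ (the elementwise Piola--Lagrange interpolant with $(\ipt{\bw}_I)_K(a)=\ipt{\bw}_K(a)$), and the error is split in the same way into a Bramble--Hilbert piece and a nonconformity piece controlled by Lemma~\ref{lem:PiolaD}, Lemma~\ref{lem:Scaling}, and the scaled $H^2\hookrightarrow L_\infty$ embedding. The only cosmetic difference is that the paper defines $\bI_h\ipt{\bw}$ by specifying its value on the master element $K_a$ (the $\bV_h$ constraint then forces $(\bI_h\ipt{\bw})_K(a)=\calM_a^K\ipt{\bw}_{K_a}(a)$), whereas you state the latter directly.
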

\begin{proof}
This result essentially follows from the arguments in 
\cite[Lemma 3.2]{DemlowNeilan23}, so we only sketch
the main ideas.

Given $\bw\in \bC(\gamma)\cap \bH^1_T(\gamma)\cap \bH^s_h(\gamma)$,
we uniquely define  $\bv =\bI_h \ipt{\bw} \in \bV_h$ such that
\[
\bv_{K_a}(a) = \ipt{\bw}_{K_a}(a)\qquad \forall a\in \calN_{h,k},
\]
where $\ipt{\bw} = \calP_{\bp^{-1}} \bw$, with $\bp^{-1}:\gamma \to \Gamma_{h,k}$.

Define $\ipt \bw_I$ such that $(\ipt \bw_I)_K\in \bV(K)$ for each $K\in \calT_{h,k}$
and $(\ipt \bw_I)_K(a) = \ipt{\bw}_K(a)$ for all nodes $a\in \calN_{h,k}$
and elements $K\in \calT_{h,k}$.
Then the arguments in \cite[Lemma 3.2]{DemlowNeilan23} and \eqref{piola_defect} yield
\[
|(\bv-\ipt\bw_I)_K(a)|\lesssim h^{2k} |\ipt{\bw}_{K_a}(a)|.
\]
Thus by
Lemma \ref{lem:Scaling} and a scaled Sobolev embedding we obtain
\begin{align*}
h^m \|\ipt{\bw}_I - \bv\|_{H^m(K)}
&\lesssim h \max_{a\in \calN_K} |(\ipt{\bw}_I-\bv)_K(a)|\lesssim h^{2k+1}\max_{a\in \calN_K} |\ipt{\bw}_{K_a}(a)|\lesssim h^{2k}\|\ipt \bw\|_{H^2_h({\omega_{K}})}.
\end{align*}
A scaling argument and the Bramble-Hilbert lemma
yields $h^m \|\ipt{\bw}-\ipt{\bw}_I\|_{H^m(K)}\lesssim h^s \|\ipt\bw\|_{H^s(K)}$.
Therefore by the triangle inequality and \eqref{eqn:ugh},
we obtain
\[
h^m \|\ipt\bw - \bv\|_{H^m(K)}\lesssim h^s \|\ipt \bw\|_{H^s(K)} + h^{2k} \|\ipt \bw\|_{H^2_h(\omega_K)}{\lesssim h^s \|\bw\|_{H^s(K^\gamma)}+h^{2k} \|\bw\|_{H^2_h(\omega_{K^\gamma})}.}
\] \end{proof}

\begin{lemma}
Given $\bw\in \bH^1_T(\gamma)$,
there exists ${\bI_{h,1}} \bw\in \bV_h$ such that
\begin{equation}\label{eqn:IhwProp}
\|{\bI_{h,1}} \bw\|_{H^1_h(\Gamma_{h,k})}\lesssim \|\bw\|_{H^1(\gamma)}\quad \text{and}\quad 
\left(\sum_{K\in \calT_{h,k}} h_K^{-2} \|\bw-\pt{\bI_{h,1} \bw}\|^2_{L_2(K^\gamma)}\right)^{1/2}\lesssim \|\bw\|_{H^1(\gamma)}.
\end{equation}
\end{lemma}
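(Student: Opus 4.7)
The plan is to combine a regularization of $\bw$ with the interpolant of Lemma~\ref{lem:ApproxProps}. Since $\bw\in \bH^1_T(\gamma)$ has no extra regularity, the nodal-type interpolant $\bI_h$ of that lemma cannot be applied directly. First I would produce a tangential $\bH^2$ surrogate $\tilde\bw$ of $\bw$ at scale $h$, then define $\bI_{h,1}\bw:=\bI_h\ipt{\tilde \bw}$ and inherit the required estimates from Lemma~\ref{lem:ApproxProps} at the (lowest admissible) smoothness index $s=2$.

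Specifically, I would construct $\tilde \bw\in \bC(\gamma)\cap \bH^1_T(\gamma)\cap \bH^2(\gamma)$ satisfying the local Bramble--Hilbert-type bounds
\begin{equation*}
\|\tilde \bw\|_{H^1(K^\gamma)}+h\|\tilde \bw\|_{H^2(K^\gamma)}+h^{-1}\|\bw-\tilde \bw\|_{L_2(K^\gamma)}\lesssim \|\bw\|_{H^1(\omega_{K^\gamma})}\qquad \forall K\in \calT_{h,k},
\end{equation*}
via an intrinsic smoothing operator on $\gamma$---for example, the heat semigroup generated by the Bochner Laplacian on tangent vector fields, truncated at time $h^2$---which preserves tangentiality and yields the bounds above through standard semigroup estimates.

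With $\tilde \bw$ in hand, applying Lemma~\ref{lem:ApproxProps} with $s=2$ and using the bounds on $\tilde \bw$ gives, for $m=0,1$ and each $K\in \calT_{h,k}$,
\begin{equation*}
h^m\|\ipt{\tilde \bw}-\bI_{h,1}\bw\|_{H^m(K)}\lesssim h^2\|\tilde \bw\|_{H^2(K^\gamma)}+h^{2k}\|\tilde \bw\|_{H^2(\omega_{K^\gamma})}\lesssim h\|\bw\|_{H^1(\omega_{K^\gamma})},
\end{equation*}
using $k\ge 1$ in the second inequality. The first bound in \eqref{eqn:IhwProp} then follows from the triangle inequality, the norm equivalence \eqref{eqn:ugh} (which gives $\|\ipt{\tilde \bw}\|_{H^1(K)}\lesssim \|\tilde \bw\|_{H^1(K^\gamma)}$), squaring, summing over $K$, and the bounded overlap of $\{\omega_{K^\gamma}\}$. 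For the second, I would add and subtract $\tilde \bw$ and combine $\|\bw-\tilde \bw\|_{L_2(K^\gamma)}\lesssim h\|\bw\|_{H^1(\omega_{K^\gamma})}$ with the displayed estimate at $m=0$ (and \eqref{eqn:ugh}), then square with weight $h^{-2}$ and sum.

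The main obstacle is the construction of $\tilde \bw$: naive Euclidean mollification destroys the tangentiality $\tilde \bw\cdot \bnu=0$ on a curved surface, so the smoother must be designed intrinsically (or followed by a tangential projection whose normal-component error meets the same scaling). An attractive alternative that sidesteps regularization is to build a Scott--Zhang-type operator directly in $\bV_h$: at each node $a$ define $\bI_{h,1}\bw|_{K_a}(a)$ by pairing $\ipt\bw|_{K_a}$ with a biorthogonal dual basis of $\bV(K_a)$, propagate to $K\in \calT_a$ via $\calM_a^K$, and repeat the scaling and polynomial-preservation arguments underlying Lemma~\ref{lem:ApproxProps}.
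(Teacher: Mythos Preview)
Your high-level strategy---regularize $\bw$ to gain one derivative, then feed it into $\bI_h$ with $s=2$---is exactly what the paper does. The difference is in the regularizer. The paper avoids semigroup machinery entirely: it pulls $\bw$ back to the affine surface $\bar\Gamma_h$, applies the componentwise Scott--Zhang interpolant into continuous piecewise \emph{linears}, then pushes forward to $\gamma$ and applies the tangential projection $\bPi$. The resulting $\bw_h=\bPi(\bar\bw_h\circ\bPsi^{-1}\circ\bp^{-1})$ lies in $\bC(\gamma)\cap\bH^1_T(\gamma)\cap\bH^2_h(\gamma)$, and because $\bar\bw_h$ is piecewise affine its second derivatives on $\bar\Gamma_h$ vanish, so $h\|\bw_h\|_{H^2_h(\omega_{K^\gamma})}\lesssim\|\bar\bw_h\|_{H^1(\omega_{\bar K})}$ for free. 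This is essentially the ``Scott--Zhang alternative'' you sketch at the end, except the paper does Scott--Zhang in a standard Euclidean Lagrange space on $\bar\Gamma_h$ rather than directly in $\bV_h$, which keeps the construction elementary and the local estimates immediate.

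One technical point on your primary route: the \emph{local} bound $\|\tilde\bw\|_{H^1(K^\gamma)}+h\|\tilde\bw\|_{H^2(K^\gamma)}+h^{-1}\|\bw-\tilde\bw\|_{L_2(K^\gamma)}\lesssim\|\bw\|_{H^1(\omega_{K^\gamma})}$ is not standard for the heat semigroup, since the heat kernel has global support and only Gaussian decay at scale $\sqrt t=h$. Fortunately you do not need it---the global versions of these three bounds (with $\|\bw\|_{H^1(\gamma)}$ on the right) are standard analytic-semigroup estimates and, after summing the elementwise bounds from Lemma~\ref{lem:ApproxProps}, yield \eqref{eqn:IhwProp} directly. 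So your argument goes through once the local claim is relaxed to a global one; the paper's Scott--Zhang construction simply buys the stronger local control with less overhead.
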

\begin{proof}
\rev{For a given $\bw\in \bH^1_T(\gamma)$, we first map the function to the 
polyhedral surface by composition, $\bar{\bw}:=\bw \circ \bp \circ \bPsi$.
We then let $\bar{\bw}_h$ be the componentwise Scott-Zhang interpolant of $\bar{\bw}$ into the continuous piecewise tri-linear Lagrange space \cite{ScottZhang90} with respect to $\bar \calT_h$.
We map this interpolant back to the surface and apply the tangential projection to obtain 
$\bw_h := \bPi (\bar{\bw}_h\circ \bPsi^{-1} \circ \bp^{-1}) \in \bC(\gamma) \cap \bH_T^1(\gamma) \cap \bH_h^2(\gamma)$. 
Due to the regularity of $\bw_h$, its interpolant $\bI_{h,1} \bw:=\bI_h \ipt \bw_h\in \bV_h$ given in Lemma \ref{lem:ApproxProps} is well-defined.}

Given $K \subset \Gamma_{h,k}$, applying norm equivalences, \eqref{eqn:InterpolantEstimate}, the fact that second-order
derivatives of $\bar \bw_h$ vanish on $\bar \Gamma_h$, 
and standard properties of the Scott-Zhang interpolant yields
$$\begin{aligned}
\|\bI_{h,1} \bw \|_{H^1(K)} & \le  \|\bI_h \ipt{\bw}_h-\ipt{\bw}_h\|_{H^1(K)} + \|\ipt{\bw}_h\|_{H^1(K)}
\\ & \lesssim h \|\bw_h \|_{H^2_h(\omega_{K^\gamma})} + \|{\bw}_h\|_{H^1( K^\gamma)}
\\ &\lesssim h \|\overline{\bw}_h\|_{H^2_h(\omega_{\bar K})} + \|\bar \bw_h\|_{H^1(\bar K)}
\\ &\lesssim \|\overline{\bw}_h\|_{H^1(\omega_{\bar K})}
\\ & \lesssim \|\bar{\bw}\|_{H^1(\omega'_{\bar K})} \lesssim \|\bw\|_{H^1(\omega'_{K^\gamma})},
\end{aligned} $$
where
\[
\omega'_{\bar K} = \mathop{\bigcup_{\bar K\in \bar \calT_h}}_{{\rm cl}(\bar K)\cap {\rm cl}(\omega_{\bar K})\neq \emptyset} K,
\]
and an analogous definition for $\omega'_{K^\gamma}$.

By similar arguments, we have
\begin{align*}
\|\bw-\pt{\bI_{h,1} \bw}\|_{L_2(K^\gamma)}
&\le \|\bw-\bw_h\|_{L_2(K^\gamma)}+\|\bw_h-\pt{\bI_{h,1}\bw}\|_{L_2(K^\gamma)}\\
&\lesssim \|\bar \bw-\bar \bw_h\|_{L_2(\bar K)} + \|\ipt{\bw}_h-\bI_{h}\ipt{\bw}_h\|_{L_2(K)}\\
&\lesssim h \|\bar \bw\|_{H^1(\omega_{\bar K})} + h^2 \| \bar \bw_h\|_{H^2_h(\omega_{K^\gamma})}
\lesssim h\|\bw\|_{H^1(\omega'_{K^\gamma})}.
\end{align*}
The desired result \eqref{eqn:IhwProp} now follows after summing over $K\in \calT_{h,k}$.
\end{proof}

\subsection{Inf-Sup stability}
In this section we establish that $\bV_h\times Q_h$ is an inf-sup stable
 pair for the surface Stokes problem. The proof relies
 on a variant of Verf\"urth's trick, analogous to the standard
 stability argument for the Euclidean Taylor--Hood pair.
We begin by showing 
a stability result with respect to a weighted $H^1$-norm.

\begin{lemma} 
There holds 
\begin{equation}\label{eqn:THKMInfSup}
\sup_{\bv\in \bV_h} \frac{\int_{\Gamma_{h,k} } ({\rm div}_{\Gamma_{h,k}} \bv)q}{\|\bv\|_{H^1_h(\Gamma_{h,k})}} \ge c_1 \|h \nab_{\Gamma_{h,k}} q\|_{L_2(\Gamma_{h,k})}\qquad \forall q\in Q_h,
\end{equation}
where $c_1>0$ is independent of $h$.
\end{lemma}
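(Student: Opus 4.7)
I would follow a Verf\"urth-trick strategy adapted to this Piola-transformed, curved setting. Given $\bv \in \bV_h$ and $q \in Q_h$, the natural first step is piecewise integration by parts on each $K \in \calT_{h,k}$. Since $\bV_h \subset \bH({\rm div}_{\Gamma_{h,k}}; \Gamma_{h,k})$ (just established) and $q \in H^1(\Gamma_{h,k})$ is continuous across edges, the co-normal boundary contributions from adjacent elements cancel pairwise, and since $\Gamma_{h,k}$ has no outer boundary this gives
\[
\int_{\Gamma_{h,k}} ({\rm div}_{\Gamma_{h,k}} \bv)\,q \;=\; -\sum_{K \in \calT_{h,k}} \int_K \bv \cdot \nab_{\Gamma_{h,k}} q.
\]
It therefore suffices, for every $q \in Q_h$, to construct $\bv \in \bV_h$ with
\[
-\sum_K \int_K \bv \cdot \nab_{\Gamma_{h,k}} q \,\gtrsim\, \|h\,\nab_{\Gamma_{h,k}} q\|_{L_2(\Gamma_{h,k})}^2 \quad\text{and}\quad \|\bv\|_{H^1_h(\Gamma_{h,k})} \lesssim \|h\,\nab_{\Gamma_{h,k}} q\|_{L_2(\Gamma_{h,k})}.
\]

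\textbf{Construction of $\bv$.} I would pull the problem back to the affine polyhedral surface $\bar \Gamma_h$. Setting $\bar q = q\circ \bPsi$, which is a continuous piecewise $\mathbb{P}_{r-1}$ function on $\bar \Gamma_h$, and $\bar \bv = \calP_{\bPsi^{-1}} \bv$, the divergence-preserving identity for the Piola transform together with the norm equivalence \eqref{eqn:PhiPiolaNormEquiv} converts the two estimates above into their analogues on $\bar \Gamma_h$, modulo geometric factors of size $1+O(h)$ coming from the Jacobian $\mu$ and from $|\nab \bPsi - \nab \bp|\lesssim h^k$. The pulled-back space $\bar \bV_h := \calP_{\bPsi^{-1}} \bV_h$ consists of piecewise $[\mathbb{P}_r]^3$ vectorfields tangent to each affine face with $\bH({\rm div}_{\bar \Gamma_h})$-conformity across edges enforced via the $\calM_a^K$ matching. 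On $\bar \Gamma_h$ I would carry out the classical Euclidean Verf\"urth construction for the $\mathbb{P}_r$--$\mathbb{P}_{r-1}$ Taylor-Hood pair: build a local bubble (or macroelement-supported) polynomial tangent to each $\bar K$ and weighted against $\nab_{\bar \Gamma_h} \bar q$, scaled by $h^2$, so that the resulting $\bar \bv^*$ delivers the lower bound elementwise while obeying the nodal matching. The size estimates follow from the reference-to-physical scaling of Lemma \ref{lem:PiolaOfa} and the nodal equivalence of Lemma \ref{lem:Scaling}; pushing $\bv^* = \calP_{\bPsi} \bar \bv^*$ forward yields the required element of $\bV_h$ since the Piola construction of $\bV_h$ is compatible with $\bPsi$.

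\textbf{Main obstacle.} The chief technical difficulty is executing the Verf\"urth-bubble construction on the Piola-based space $\bar \bV_h$. Unlike Euclidean Taylor-Hood, $\bar \bV_h$ is not $\bH^1$-conforming: its elements are only normal-continuous across edges (the tangential components may jump), and velocities on different faces live in different tangent planes related by $\calM_a^K$. Consequently the standard element-bubble argument must be adapted to respect this tangential constraint on each face and the nodal matching across edges. An additional subtlety is the $O(h^{2k})$ defect in the $\calM_a^K$ relation quantified by Lemma \ref{lem:PiolaD}, which must be absorbed in the stability bound. The case $r=2$ is particularly delicate since no strict interior polynomial bubble exists in $\mathbb{P}_2$, so a macroelement or Fortin-operator variant of the Verf\"urth construction will likely be needed there; this, together with the control of geometric perturbations via Lemmas \ref{lem:PiolaOfa}--\ref{lem:PiolaD}, is where the surface adaptation enters essentially.
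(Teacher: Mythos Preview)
Your strategy is exactly the paper's: pull back to $\bar\Gamma_h$ via the Piola map, carry out the classical Verf\"urth edge-bubble construction there, and control the resulting $O(h)$ geometric perturbation. Two small clarifications are worth making. First, your concern about $r=2$ is unnecessary: the paper (and the standard Euclidean proof) uses the quadratic \emph{edge} bubble $\phi_{\bar e}$, setting vertex DOFs to zero and placing the edge DOFs along the (Piola-mapped) edge tangent $\calP_{\bPsi_{\bar K_a}}\bt_{\bar e}$, with interior DOFs chosen so that the pullback $\bar\bv$ matches $\tilde\bv(\bar x)=-\sum_{\bar e}h_e^2\phi_{\bar e}(\bar x)(\bt_{\bar e}\cdot\nab_{\bar\Gamma_h}\bar q)\,\bt_{\bar e}$ at all interior nodes. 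This works uniformly for $r\ge 2$; no macroelement argument is needed. Second, the perturbation you must absorb is not the $O(h^{2k})$ defect of Lemma~\ref{lem:PiolaD} (which compares $\calM_a^K$ to the exact Piola transfer from $\gamma$ and is irrelevant here), but the $O(h)$ estimate of Lemma~\ref{lem:xPert}: because the edge DOF is chosen tangent to $e$, one has $\calM_a^K\bv_{K_a}(a)=(\bnu_{K_a}\cdot\bnu_K)\bv_{K_a}(a)$, and pulling back gives $\bar\bv_{\bar K}(\bar a)=C_{K,e,a}\,\tilde\bv(\bar a)$ with $C_{K,e,a}=1+O(h)$ by Lemma~\ref{lem:xPert}. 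This $O(h)$ discrepancy between $\bar\bv$ and the classical $\tilde\bv$ is then absorbed into the lower bound for $h$ small.
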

\begin{proof}
Let $\calN_{h,k}^{\calE}\subset \calN_{h,k}$ denote the set of DOFs that lie
in the interior of edges, and let $\calN_{h,k}^{\calT}\subset \calN_{h,k}$ denote
the set of DOFs that lie in the interior of a (curved) simplex. The corresponding sets
defined on $\bar \calT_h$ are denoted by $\bar \calN_h^{\bar \calE}$ and $\bar \calN_h^{\bar \calT}$.

For given $q \in Q_h$, let $\bar{q}=q \circ \bPsi \in \bar{Q}_h$ and define $\bv \in \bV_h$ as follows.  

\noindent(i) {\em Vertex DOFs:} We set $\bv(a)=0$ at all vertices $a$ in $\calT_{h,k}$.

\noindent(ii) {\em Edge DOFs:} If $a\in \calN_{h,k}^{\calE}$ lies in the interior of an edge $e$, 
then we set $\bar e = \bPsi^{-1}(e)$, let  $\bt_{\bar e}$ be a unit tangent vector to $\bar e$,
 and set $\bar a = \bPsi^{-1}(a)$ to be the corresponding point on $\bar e$.
Let $\phi_{\bar e}$ be the continuous, piecewise quadratic polynomial (bubble) that takes the value 
$1$ at the edge midpoint of $\bar e$ and vanishes at the vertices and other edges of $\bar \calT_h$.  
We set
\[
\bv_{K_a}(a) =-h_e^2 \phi_{\bar e}(\bar a)({\bt}_{\bar e}\cdot \nabla_{\bar \Gamma_h} \bar{q}(\bar a))  (\calP_{\bPsi_{\bar K_{ a}}} {\bt}_{\bar e})(a)\qquad a\in \calN_{h,k}^{\calE},
\]
where $\bar K_{a} = \bPsi^{-1}(K_{a})$, and we note $ \bt_{\bar e} \cdot \nabla_{\bar \Gamma_h} \bar{q}(\bar a)$ is single-valued 
because $\bar q$ is continuous. 
We see  that $(\calP_{\bPsi_{\bar K_{a}}}{\bt}_{\bar e})(a)$ is tangent to $\Gamma_{h,k}$
by properties of the Piola transform. 
\rev{For a unit co-normal $ \bn_e$ of $ e$, let $\bar \bn_e = \nab \bPsi^\intercal  \bn_e/ |\nab \bPsi^\intercal  \bn_e|$
be the corresponding co-normal of $\bar e$.
We then have}
 $0 = \big(\mu_e(\bar a)\big)^{-1}( \bt_{\bar e} \cdot \bar \bn_e)(\bar a) =(\calP_{\bPsi_{\bar K_{a}}} \bt_{\bar e})(a)\cdot \bn_e(a)$,
 \rev{where we recall $\mu_e$ is given in Lemma \ref{lem:ChangeVarE}.}
 Thus, $(\calP_{\bPsi_{\bar K_{a}}}{\bt}_{\bar e})(a)$ is parallel to the edge unit tangent $\bt_e(a)$.

For both $K \ni a$, we then have using \eqref{eqn:calMDef} and $\bnu_K(a) \cdot \bt_e(a)=0$ that 
\[
\bv_{K}(a) = \calM_a^K \bv_{K_a}(a)=-h_e^2 \phi_{\bar e}(\bar a)  (\bnu_{K_a}(a) \cdot \bnu_{K}(a) ) ({\bt}_{\bar e}\cdot \nabla_{\bar \Gamma_h} \bar{q}(\bar a)) (\calP_{\bPsi_{\bar K_a}} {\bt}_{\bar e})(a),
\quad K\in \calT_a,\ \ a\in \calN_{h,k}^{\calE}.
\]

\noindent(iii) {\em Interior DOFs:} If $a\in \calN_{h,k}^{\calT}$ is a DOF that lies in the interior of $K\in \calT_{h,k}$ (so that $K_a = K$), then we set
\[
\bv_{K_a}(a) =  -\sum_{e\subset \p K} h_e^2 \phi_{\bar e}(\bar a)( \bt_{\bar e}\cdot \nab_{\bar \Gamma_h} \bar q(\bar a))(\calP_{{\bPsi}_{\bar K_a}}  \bt_{\bar e})(a)\qquad a\in \calN_{h,k}^{\calT}.
\]

Letting $\bar{\bv}=\calP_{\bPsi^{-1}}\bv\in \bH({\rm div}_{\bar \Gamma_h};\bar \Gamma_h)$, we  have 
\[
\bar{\bv}_{\bar{K}}(\bar a)=(\calP_{\bPsi_K^{-1}} \bv_{K})(\bar a) =
\left\{
\begin{array}{ll}
-h_e^2\phi_{\bar e}(\bar a)  (\bnu_{K_a}(a) \cdot \bnu_{K}(a) ) ({\bt}_{\bar e}\cdot \nabla_{\bar \Gamma_h} \bar{q}(\bar a)) \calP_{\bPsi_K^{-1}}(a) (\calP_{\bPsi_{\bar K_a}} {\bt}_{\bar e})(a) 
& a\in \calN_{h,k}^{\calE},\\
-\sum_{e\subset \p K} h_e^2 \phi_{\bar e}(\bar a)( \bt_{\bar e}\cdot \nab_{\bar \Gamma_h} \bar q(\bar a)) \bt_{\bar e}& a\in \calN_{h,k}^{\calT},
\end{array}\right.
\]
and $\bar \bv$ vanishes at the vertices of $\bar \Gamma_h$.

Recall that, for $a\in \calN_{h,k}^{\calE}$, $(\calP_{\bPsi_{\bar K_a}} {\bt}_{\bar e})(a) = \calP_{\bPsi_{\bar K_{a}}}(\bar{a}) {\bt}_{\bar e}$ is parallel to $\bt_e(a)$,
and so $\calP_{\bPsi_K^{-1}}(a) \calP_{\bPsi_{K_{a}}}(\bar{a}) {\bt}_{\bar e}$ is parallel to $ \bt_{\bar e}$.
Thus we can write
\begin{equation}\label{eqn:LineA}
(\bnu_{K_{a}}(a)  \cdot \bnu_K(a)) \calP_{\bPsi_K^{-1}}(a) \calP_{\bPsi_{K_{a}}}(\bar{a}) {\bt}_{\bar e}= C_{K, e,a} {\bt}_{\bar e}
\end{equation}
for some $C_{K,e,a}\in \bbR$.
An application of Lemma \ref{lem:xPert} yields
\begin{equation*}
\begin{split}
\left|\calP_{\bPsi_K^{-1}}(a) \calP_{\bPsi_{\bar K_{a}}}(\bar a)  \bt_{\bar e}- \bt_{\bar e}\right|
&\le \left|\calP_{\bPsi_K^{-1}}(a) \calP_{\bPsi_{\bar K_{a}}}(\bar a)  \bt_{\bar e}- \calP_{\bPsi_{\bar K_{a}}}(\bar a)  \bt_{\bar e}\right|\\
&\qquad+\left|\calP_{\bPsi_{\bar K_{a}}}(\bar a)  \bt_{\bar e}- \bt_{\bar e}\right|\\
&\lesssim h | \bt_{\bar e}| = h.
\end{split}
\end{equation*}
This estimate along with $\bnu_K(a)\cdot \bnu_{K_a}(a) = 1+O(h^{2k})$ and \eqref{eqn:LineA}
shows $C_{K,e,a} = 1+\calO(h)$ and
\[
\bar{\bv}_{\bar{K}}(\bar a)=
\left\{
\begin{array}{ll}
-h_e^2 C_{K,e,a}  \phi_{\bar e}(\bar a)  ({\bt}_{\bar e}\cdot \nabla_{\bar \Gamma_h} \bar{q}(\bar a))  {\bt}_{\bar e}(\rev{\bar a}) 
& \rev{\bar a}\in \bar \calN_{h}^{\bar \calE},\\
-\sum_{e\subset \p K} h_e^2 \phi_{\bar e}(\bar a)(\bt_{\bar e}\cdot \nab_{\bar \Gamma_h} \bar q(\bar a)) \bt_{\bar e}& \rev{\bar a}\in \bar \calN_{h}^{\bar \calT},
\end{array}\right.
\]
\rev{where we use the fact that $\bt_{\bar e}$ is constant in the second line.}
We see from a simple scaling argument that $\|\bar \bv\|_{H^1_h(\bar \Gamma_h)}\lesssim \|h \nab_{\bar \Gamma_h} \bar q\|_{L_2(\bar \Gamma_h)}$.

Now define $\tilde \bv:\bar \Gamma_h\to \bbR^3$ as
\[
\tilde{\bv} (\bar x)= -\sum_{\bar e \in \bar \calE_h}  h_e^2 \phi_{\bar e}(\bar x)({\bt}_{\bar e} \cdot \nab_{\Gamma_h} \bar{q}(\bar x)) {\bt}_{\bar e}
\]
and note that both $\tilde \bv$ and $\bar \bv$ are  polynomials of degree $r$ on each $\bar K\in \bar \calT_h$ with
\[
(\tilde \bv -\bar{\bv})_{\bar{K}}(\bar a)=
\left\{
\begin{array}{ll}
-h_e^2 (1-C_{K,e,a})  \phi_{\bar e}(\bar a)  ({\bt}_{\bar e}\cdot \nabla_{\bar \Gamma_h} \bar{q}(\bar a))  {\bt}_{\bar e}(a) 
& a\in \bar \calN_{h}^{\bar \calE},\\
0& a\in \bar \calN_{h}^{\bar \calT}.
\end{array}\right.
\]
A scaling argument and the inequality $|1-C_{K,e,a}|\lesssim h$ then yields
\begin{align*}
\|h^{-1}(\tilde \bv-\bar \bv)\|_{L_2(\bar \Gamma_h)}^2 
&\lesssim \sum_{\bar a\in \bar \calN_h^{\bar \calE}}  |(\tilde \bv-\bar \bv)(\bar a)|^2\lesssim 
 \sum_{\bar a\in \bar \calN_h} h^6 |\nab_{\bar \Gamma_h} \bar q(\bar a)|^2\lesssim h^2 \|h \nab_{\bar \Gamma_h} \bar q\|_{L_2(\bar \Gamma_h)}^2.
\end{align*}

Using properties of the Piola transform and integrating by parts, rearranging the sum, and using norm equivalence, we then obtain 
for $h$ sufficiently small
\begin{align*}
\int_{\Gamma_{h,k}} ({\rm div}_{\Gamma_{h,k}} \bv) q & = \int_{\bar \Gamma_h} ({\rm div}_{\bar \Gamma_h} \bar{\bv}) \bar{q}  = -\int_{\bar \Gamma_h} \tilde{\bv} \cdot \nabla_{\bar \Gamma_h} \bar{q}
-\int_{\bar \Gamma_h} (\bar \bv-\tilde \bv)\cdot \nab_{\bar \Gamma_h} \bar q
 \\ & \ge \sum_{\bar{K} \in \bar{\calT}_h} \sum_{\bar e \in \bar \calE_h: \bar{e} \subset \partial \bar{K}} \int_{\bar{K}}  h_e^2 \phi_{\bar e} ({\bt}_{\bar e} \cdot \nab_{\bar \Gamma_h} \bar{q})^2
 - \|h^{-1}(\bar \bv-\tilde \bv)\|_{L_2(\bar\Gamma_h)}  \|h \nab_{\bar \Gamma_h} \bar q\|_{L_2(\bar \Gamma_h)}
\\ & \gtrsim \sum_{e\in \calE_h} h^2_e \sum_{K\in \calT_{h,k}: e\subset \p K} \int_{K} |\nab_{\Gamma_h} \bar{q}\cdot {\bt}_{\bar e}|^2-  h\|h \nab_{\bar \Gamma_h} \bar q\|_{L_2(\bar \Gamma_h)}^2
\gtrsim \|h \nab_{\Gamma_h} \bar{q}\|_{L_2(\bar \Gamma_h)}^2
\\ & \gtrsim \|h \nab_{\Gamma_{h,k}} q\|_{L_2(\Gamma_{h,k})}^2.
\end{align*}
The proof is complete upon using this estimate
and
\[
\|\bv\|_{H^1_h(\Gamma_{h,k})}\lesssim \|\bar \bv\|_{H^1_h(\bar \Gamma_h)}\lesssim \|h \nab_{\bar \Gamma_h} \bar q\|_{L_2(\bar \Gamma_h)}\lesssim \|h \nab_{\Gamma_{h,k}} q\|_{L_2(\Gamma_{h,k})}.
\]
\end{proof}

\begin{theorem}
There holds
\begin{align}\label{eqn:THInfSup}
\sup_{\bv\in \bV_h} \frac{\int_{\Gamma_{h,k}} ({\rm div}_{\Gamma_{h,k}} \bv) q}{\|\bv\|_{H^1_h(\Gamma_{h,k})}}
\ge \beta \|q\|_{L_2(\Gamma_{h,k})}\qquad \forall q\in Q_h.
\end{align}
\end{theorem}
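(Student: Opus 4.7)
My plan is a Verf\"urth-type argument that upgrades the weighted inf-sup bound \eqref{eqn:THKMInfSup} to the $L_2$-pressure norm by coupling it with the continuous surface Stokes inf-sup on $\gamma$. Since $\gamma$ is smooth and closed, ${\rm div}_\gamma:\bH^1_T(\gamma)\to \mathring L_2(\gamma)$ admits a bounded right inverse: for each $r\in \mathring L_2(\gamma)$ there exists $\bw\in \bH^1_T(\gamma)$ with ${\rm div}_\gamma \bw = r$ and $\|\bw\|_{H^1(\gamma)}\lesssim \|r\|_{L_2(\gamma)}$.

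Given $q\in Q_h$, let $\tilde q$ denote its lift to $\gamma$ and set $\bar q = |\gamma|^{-1}\int_\gamma \tilde q$. A change of variables together with $\int_{\Gamma_{h,k}} q = 0$ gives $|\gamma|\bar q = \int_{\Gamma_{h,k}} q(\mu_h-1)$, so the bound $|\mu_h-1|\lesssim h^{k+1}$ (which follows from $|d|\lesssim h^{k+1}$ and $|\bnu\cdot\bnu_h-1|\lesssim h^{2k}$) yields $|\bar q|\lesssim h^{k+1}\|q\|_{L_2(\Gamma_{h,k})}$ and hence $\|\tilde q-\bar q\|_{L_2(\gamma)}\approx \|q\|_{L_2(\Gamma_{h,k})}$. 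Choose $\bw$ solving ${\rm div}_\gamma\bw=\tilde q-\bar q$ with $\|\bw\|_{H^1(\gamma)}\lesssim \|q\|_{L_2(\Gamma_{h,k})}$ and set $\bv_1 := \bI_{h,1}\bw\in \bV_h$. Using the Piola identity ${\rm div}_{\Gamma_{h,k}}\bv_1 = \mu_h ({\rm div}_\gamma \pt{\bv_1})\circ \bp$ combined with $\mu_h\,d\sigma_{\Gamma_{h,k}} = d\sigma_\gamma$, I get
\begin{align*}
\int_{\Gamma_{h,k}} ({\rm div}_{\Gamma_{h,k}}\bv_1)\,q
  = \int_\gamma ({\rm div}_\gamma\pt{\bv_1})\,\tilde q
  = \int_\gamma ({\rm div}_\gamma\bw)\,\tilde q
   + \int_\gamma {\rm div}_\gamma(\pt{\bv_1}-\bw)\,\tilde q.
\end{align*}
The first term equals $\|\tilde q\|_{L_2(\gamma)}^2 - \bar q^2|\gamma|\gtrsim \|q\|_{L_2(\Gamma_{h,k})}^2$ for $h$ small. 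For the second, integration by parts on the closed surface $\gamma$ (valid since $\pt{\bv_1}-\bw\in \bH({\rm div}_\gamma;\gamma)$ is tangential and $\tilde q\in H^1(\gamma)$), elementwise Cauchy--Schwarz, and the norm equivalence $\|\nabla_\gamma \tilde q\|_{L_2(K^\gamma)}\approx \|\nabla_{\Gamma_{h,k}} q\|_{L_2(K)}$ give
\begin{align*}
\Bigl|\int_\gamma {\rm div}_\gamma(\pt{\bv_1}-\bw)\,\tilde q\Bigr|
  = \Bigl|\int_\gamma (\pt{\bv_1}-\bw)\cdot \nabla_\gamma \tilde q\Bigr|
  \lesssim \Bigl(\sum_{K\in\calT_{h,k}} h_K^{-2}\|\bw-\pt{\bv_1}\|_{L_2(K^\gamma)}^2\Bigr)^{1/2}\|h\nabla_{\Gamma_{h,k}} q\|_{L_2(\Gamma_{h,k})}.
\end{align*}
The second estimate in \eqref{eqn:IhwProp} bounds the bracketed sum by $\|\bw\|_{H^1(\gamma)}\lesssim \|q\|_{L_2(\Gamma_{h,k})}$, while the first estimate gives $\|\bv_1\|_{H^1_h(\Gamma_{h,k})}\lesssim \|q\|_{L_2(\Gamma_{h,k})}$. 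Combining,
\begin{align*}
\int_{\Gamma_{h,k}}({\rm div}_{\Gamma_{h,k}}\bv_1)\,q\ \ge\ c_0\|q\|_{L_2(\Gamma_{h,k})}^2 - C\|q\|_{L_2(\Gamma_{h,k})}\|h\nabla_{\Gamma_{h,k}} q\|_{L_2(\Gamma_{h,k})}.
\end{align*}

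To close the argument I apply \eqref{eqn:THKMInfSup} to select $\bv_2\in \bV_h$ with $\|\bv_2\|_{H^1_h(\Gamma_{h,k})}\approx \|h\nabla_{\Gamma_{h,k}} q\|_{L_2(\Gamma_{h,k})}$ and $\int_{\Gamma_{h,k}}({\rm div}_{\Gamma_{h,k}}\bv_2)\,q \gtrsim \|h\nabla_{\Gamma_{h,k}} q\|_{L_2(\Gamma_{h,k})}^2$, and set $\bv := \bv_1+\delta\bv_2$ for a fixed, sufficiently large $\delta>0$. A Young's inequality absorbs the cross term $C\|q\|\|h\nabla q\|$, producing $\int({\rm div}_{\Gamma_{h,k}}\bv)\,q\gtrsim \|q\|_{L_2(\Gamma_{h,k})}^2$, and the standard inverse estimate $\|h\nabla_{\Gamma_{h,k}} q\|_{L_2}\lesssim \|q\|_{L_2}$ on $Q_h$ yields $\|\bv\|_{H^1_h(\Gamma_{h,k})}\lesssim \|q\|_{L_2(\Gamma_{h,k})}$, giving \eqref{eqn:THInfSup}. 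The main obstacle is the divergence-mismatch estimate above: this is precisely where the sharp weighted $L_2$ bound in \eqref{eqn:IhwProp} (with the $h_K^{-1}$ weight) becomes essential, and where the geometric interplay between $\gamma$ and $\Gamma_{h,k}$ via the Piola transform must be carefully tracked.
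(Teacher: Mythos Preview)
Your argument is correct and follows essentially the same Verf\"urth-type route as the paper. The only cosmetic differences are: (i) the paper solves ${\rm div}_\gamma \bw = (\mu_h^{-1} q)^\ell$, which automatically has mean zero on $\gamma$ and makes the main term come out exactly as $\|q\|_{L_2(\Gamma_{h,k})}^2$, whereas you solve ${\rm div}_\gamma \bw = \tilde q - \bar q$ and then estimate the small mean correction; and (ii) the paper combines the two bounds at the level of the supremum (adding \eqref{eqn:THKMInfSup} and the Verf\"urth estimate directly), while you build an explicit test function $\bv_1+\delta\bv_2$ and close with Young's inequality. Both are standard and equivalent variants of the same argument, and your use of the Piola identity, the $\bH({\rm div}_\gamma;\gamma)$-conformity of $\pt{\bv_1}$, and the weighted $L_2$ bound in \eqref{eqn:IhwProp} are all handled correctly.
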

\begin{proof}
For a fixed $q\in Q_h$,
let $\bw\in \bH_T^1(\gamma)$ satisfy \cite{JankuhnEtal18}
\[
{\rm div}_\gamma \bw = (\mu_{h}^{-1} q)^\ell,\qquad \|\bw\|_{H^1(\gamma)}\lesssim \|(\mu_{h}^{-1} q)^\ell\|_{L_2(\gamma)}\lesssim \|q\|_{L_2(\Gamma_{h,k})}.
\]
Let $\bI_{h,1} \bw\in \bV_h$ be the approximation of $\bw$ satisfying \eqref{eqn:IhwProp}.
Then
\begin{align*}
\frac{\int_{\Gamma_h} ({\rm div}_{\Gamma_h} (\bI_{h,1} \bw)) q}{\|\bI_{h,1} \bw\|_{H^1_h(\Gamma_h)}} = 
\frac{\int_{\gamma} ({\rm div}_\gamma \pt{\bI_{h,1} \bw})q^\ell}{\|\bI_{h,1} \bw\|_{H^1_h(\Gamma_h)}}
\gtrsim  \frac{\int_{\gamma} ({\rm div}_\gamma \pt{\bI_{h,1} \bw})q^\ell}{\|\bw\|_{H^1(\gamma)}}.
\end{align*}
We then add and subtract $\bw$, integrate by parts, and use \eqref{eqn:IhwProp}:
\begin{equation}\label{eqn:ASw}
\begin{split}
\frac{\int_{\Gamma_{h,k}} ({\rm div}_{\Gamma_{h,k}} (\bI_{h,1} \bw)) q}{\|\bI_h \bw\|_{H^1_h(\Gamma_{h,k})}} 
&\gtrsim  \frac{\int_{\gamma} ({\rm div}_\gamma \bw)q^\ell}{\|\bw\|_{H^1(\gamma)}}
-\frac{\int_\gamma ({\rm div}_\gamma (\bw-\pt{\bI_{h,1} \bw})) q^\ell}{\|\bw\|_{H^1(\gamma)}}\\
&\geq c_2\|q\|_{L_2(\Gamma_{h,k})}
+\frac{\int_\gamma  (\bw-\pt{\bI_{h,1} \bw}) \cdot \nab_\gamma q^\ell}{\|\bw\|_{H^1(\gamma)}}\\
&{\geq c_2} \|q\|_{L_2(\Gamma_{h,k})} 
- { c_3} \left(\sum_{K\in \calT_{h,k}} h_K^2 \|\nab_\gamma q^\ell\|_{L_2(K^\gamma)}^2\right)^{1/2}.
\end{split}
\end{equation}
for some $c_2,c_3>0$ independent of $h$.
%
Note that $\|\nab_\gamma q^\ell\|_{L^2(K^\gamma )}\approx \|\nab_{\Gamma_h} q\|_{L_2(K)}$ on each $K\in \calT_{h,k}$. 
Consequently,
\begin{align*}
\left(\sum_{K\in \calT_{h,k}} h_K^2 \|\nab_\gamma q^\ell\|_{L_2(K^\gamma)}^2\right)^{1/2}\lesssim \|h \nab_{\Gamma_{h,k}} q\|_{L_2(\Gamma_{h,k})}.
\end{align*}
Thus we conclude from \eqref{eqn:ASw},
\begin{equation}\label{eqn:Verf}
\sup_{\bv\in \bV_h} \frac{\int_{\Gamma_{h,k}} ({\rm div}_{\Gamma_{h,k}} \bv) q}{\|\bv\|_{H^1_h(\Gamma_{h,k})}}
\ge \frac{\int_{\Gamma_{h,k}} ({\rm div}_{\Gamma_{h,k}} (\bI_{h,1} \bw)) q}{\|\bI_{h,1} \bw\|_{H^1_h(\Gamma_{h,k})}} 
\ge  c_2 \|q\|_{L_2(\Gamma_{h,k})} - c_4 \|h \nab_{\Gamma_{h,k}} q\|_{L_2(\Gamma_{h,k})}
\end{equation}
with $c_4>0$ independent of $h$.
Finally, we combine \eqref{eqn:THKMInfSup} 
and \eqref{eqn:Verf} to obtain 
\begin{align*}
c_2  \|q\|_{L_2(\Gamma_{h,k})} 
&\le \sup_{\bv\in \bV_h} \frac{\int_{\Gamma_{h,k}} ({\rm div}_{\Gamma_{h,k}} \bv) q}{\|\bv\|_{H^1_h(\Gamma_{h,k})}}
+c_4 \|h \nab_{\Gamma_{h,k}} q\|_{L_2(\Gamma_{h,k})}\\
&\le \left(\sup_{\bv\in \bV_h} \frac{\int_{\Gamma_{h,k}} ({\rm div}_{\Gamma_{h,k}} \bv) q}{\|\bv\|_{H^1_h(\Gamma_{h,k})}}\right)
+\frac{c_4}{c_1} \left(\sup_{\bv\in \bV_h} \frac{\int_{\Gamma_{h,k}} ({\rm div}_{\Gamma_{h,k}} \bv) q}{\|\bv\|_{H^1_h(\Gamma_{h,k})}}\right)\\
& = (1+ \frac{c_4}{c_1})  \left(\sup_{\bv\in \bV_h} \frac{\int_{\Gamma_{h,k}} ({\rm div}_{\Gamma_{h,k}} \bv) q}{\|\bv\|_{H^1_h(\Gamma_{h,k})}}\right).
\end{align*}
Thus, we obtain  the inf-sup condition \eqref{eqn:THInfSup}
with $\beta = c_2 (1+ \frac{c_4}{c_1})^{-1}$.
\end{proof}

\section{Consistency Estimates}\label{sec-Consistency}
In this section, we derive several
estimates that deal with the lack of $\bH^1$-conformity
of the finite element space $\bV_h$.
These results show that, while functions in this space
are not globally continuous on $\Gamma_{h,k}$
they possess sufficient ``weak continuity'' properties
to ensure the finite element method presented in the subsequent section
converges with optimal order.
To derive such estimates, we make the following assumption
on the location of the edge degrees of freedom.
\begin{assumption}
We assume that the location of the edge DOFs of $\bV_h$
correspond to the $\bPsi$-mapped $(r+1)$-point Gauss-Lobatto rule.
We further assume  the surface $\gamma$ (and hence $\bp$) is smooth with 
\begin{equation}
\label{eqn:GammaPsmooth}
\rev{\|\bp\|_{W^{2r+1}_\infty(U_\delta)}\lesssim 1.}
\end{equation}
\end{assumption}

We define the jump of a given elementwise-smooth (with respect to $\calT_{h,k}^\gamma$) vector-valued function $\bv$ across $e^\gamma$
as
\begin{align*}
\jump{\bv}|_{e^\gamma} = \bv_+\otimes \bn^\gamma_+ + \bv_- \otimes \bn^\gamma_-,
\end{align*}
where $e^\gamma = \p K^\gamma_+\cap \p K^\gamma_-\ (K^\gamma_{\pm}\in \calT_{h,k}^\gamma)$, $\bv_{\pm} = \bv|_{K^\gamma_\pm}$, and $\bnu^\gamma_{\pm}$
is the outward unit co-normal with respect to $\p K_{\pm}^\gamma$.
Likewise, given an elementwise-smooth quantity $v$, let $\{v \}|_e = \frac12(v_++v_-)$ denote the average value of $v$ on the mesh skeleton.

Recall $\bV(K)$ is the space of functions in $\bV_h$ restricted to $K\in \calT_{h,k}$.
For a given $\bv\in \bV_h$, we define
$\bv_I$ such that $\bv_I|_K\in \bV(K)$ for all $K\in \calT_{h,k}$ and 
\begin{equation}\label{eqn:bvcConstruction}
\bv_I|_K (a) = \calP_{\bp|_K^{-1}} \calP_{\bp} \bv_{K_a}(a)\qquad \forall K\in \calT_a,\ \forall a\in \calN_{h,k}.
\end{equation}
In further detail, we have
\begin{align*}
\bv_I|_K(a) 
 & = \frac{(\bnu\cdot \bnu_K)(a)}{(\bnu\cdot \bnu_{K_a})(a)} \left[
 {\bf I}- { \frac{ \bnu(a) \otimes \bnu_K(a)}{\bnu(a) \cdot \bnu_K(a)} } \right]\bPi(a) \bv_{K_a}(a)\qquad \forall K\in \calT_a,\ \forall a\in \calN_{h,k}.
\end{align*}
By co-normal-preserving properties
of the Piola transform, there holds $\bv_I\in \bH({\rm div}_{\Gamma_{h,k}};\Gamma_{h,k})$.
Furthermore, using Lemma \ref{lem:PiolaD}, we have $|(\bv-\bv_I)_K(a)|\lesssim h^{2k} |\bv_{K_a}(a)|$. Because $\bv,\bv_I|_K\in \bV(K)$, 
a scaling estimate yields the following result.
\begin{lemma}\label{lem:NTS1}
For a given $\bv\in \bV_h$, let $\bv_I$ be uniquely determined by \eqref{eqn:bvcConstruction}.
Then there holds
\begin{equation}
\label{eqn:NTS1}
\|\pt \bv-\pt \bv_I\|_{L_2(\gamma)}+h \|\pt \bv - \pt \bv_I\|_{H^1(\gamma)}\lesssim h^{2k} \|\pt \bv\|_{L_2(\gamma)}.
\end{equation}
\end{lemma}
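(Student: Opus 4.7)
The plan is to reduce the estimate to a purely local, nodal comparison on each element and then exploit finite-dimensionality and the norm equivalences already established for the Piola map.

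First I would set $\bw = \bv - \bv_I$, which by construction satisfies $\bw|_K \in \bV(K)$ for every $K \in \calT_{h,k}$. The key point is the pointwise defect bound noted just before the lemma, namely
\[
|\bw_K(a)| \lesssim h^{2k}|\bv_{K_a}(a)|\qquad \forall a \in \calN_K,\ K \in \calT_a,
\]
which follows from Lemma~\ref{lem:PiolaD} applied with the tangential vector $\pt{\bv}_{K_a}(\bp(a))$ at the common point $\bp(a) \in \gamma$.

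Next I would use Lemma~\ref{lem:Scaling} with $m=0$ and $m=1$, rewritten as
\[
\|\bw\|_{L_2(K)} \lesssim h\sum_{a\in \calN_K}|\bw(a)|,\qquad \|\bw\|_{H^1(K)} \lesssim \sum_{a\in \calN_K}|\bw(a)|,
\]
to bound $\bw$ locally by its nodal values. Inserting the defect bound gives the right-hand side in terms of $\sum_{a \in \calN_K}|\bv_{K_a}(a)|$. To return this to a Sobolev norm of $\bv$, I would apply the standard inverse/trace argument on the reference element using \eqref{eqn:PiolaOfa} and \eqref{eqn:ScalingOfPiolaA}: since $\bv_{K_a} = \calP_{\ba_{K_a}}\hat{\bv}$ with $\hat{\bv} \in [\mathbb{P}_r(\hat K)]^2$, we have $|\bv_{K_a}(a)| \lesssim \|\calP_{\ba_{K_a}}\|_{L_\infty(\hat K)}\|\hat{\bv}\|_{L_\infty(\hat K)} \lesssim h^{-1}\|\bv\|_{L_2(K_a)}$, using norm equivalence on the fixed finite-dimensional reference polynomial space.

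Combining these local estimates yields
\[
\|\bw\|_{L_2(K)} \lesssim h^{2k}\|\bv\|_{L_2(\omega_K)},\qquad \|\bw\|_{H^1(K)} \lesssim h^{2k-1}\|\bv\|_{L_2(\omega_K)},
\]
where $\omega_K$ appears because nodes on $\partial K$ reference the master elements $K_a$ of neighboring triangles. Passing to $\gamma$ through the Piola norm equivalence \eqref{eqn:ugh} converts $\|\bw\|_{H^m(K)}$ into $\|\pt{\bv} - \pt{\bv}_I\|_{H^m(K^\gamma)}$ and $\|\bv\|_{L_2(K_a)}$ into $\|\pt{\bv}\|_{L_2(K_a^\gamma)}$, up to constants. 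Squaring, summing over $K \in \calT_{h,k}$, and using shape-regularity to control the finite overlap of the patches $\omega_{K^\gamma}$ gives the claim \eqref{eqn:NTS1}.

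The only mildly delicate step is the inverse-type estimate $|\bv_{K_a}(a)| \lesssim h^{-1}\|\bv\|_{L_2(K_a)}$, since the Piola factor $\calP_{\ba_{K_a}}$ contributes a factor $h^{-1}$ rather than being $\calO(1)$; this factor is exactly what balances the factor $h$ from the $m=0$ case of Lemma~\ref{lem:Scaling} and produces the clean $h^{2k}$ scaling in the $L_2$ bound (and the corresponding $h^{2k-1}$ in the $H^1$ bound). Everything else is bookkeeping with norm equivalences that have already been set up.
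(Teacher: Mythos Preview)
Your proposal is correct and follows essentially the same approach as the paper: reduce to nodal defects via the $h^{2k}$ bound from Lemma~\ref{lem:PiolaD}, convert nodal values to Sobolev norms by finite-dimensional scaling, and pass to $\gamma$ via the Piola norm equivalence~\eqref{eqn:ugh}. The only cosmetic difference is that the paper first transfers to the affine surface $\bar K$ via $\calP_{\bPsi^{-1}}$ and applies the scaling argument to the genuine polynomials $\bar\bv,\bar\bv_I$ there, whereas you invoke Lemma~\ref{lem:Scaling} directly on $K$; since Lemma~\ref{lem:Scaling} is itself proved by pulling back to the reference element, the two arguments are equivalent.
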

\begin{proof} 
By a change of variables and \eqref{eqn:PiolaBounds}, we have ($m=0,1$) 
\begin{align*}
\|\pt \bv - \pt \bv_I\|_{H^m(K^\gamma)}
&\lesssim \|\bv -\bv_I\|_{H^m(K)} = \|\calP_{\bPsi} (\bar \bv- \bar \bv_I)\|_{H^m(K)}\\
&\lesssim \|\calP_{\bPsi}\|_{W^m_\infty(\bar K)} \|\bar \bv-\bar \bv_I\|_{H^m(\bar K)}\lesssim \|\bar \bv-\bar \bv_I\|_{H^m(\bar K)},
\end{align*}
where $\bar \bv = \bv \circ \bPsi$ and $\bar \bv_I = \bv_I \circ \bPsi$.
Both $\bar \bv$ and $\bar \bv_I$ are polynomials of degree $r$ on $\bar K$, so a simple scaling argument and \eqref{eqn:PiolaBounds} yields
\begin{align*}
\|\bar \bv-\bar \bv_I\|_{H^m(\bar K)}^2
&\lesssim \sum_{\bar a\in \bar \calN_{\bar K}} h^{2-2m} |(\bar \bv-\bar \bv_I)(\bar a)|^2\\
& = \sum_{\bar a\in \bar \calN_{\bar K}} h^{2-2m} |\calP_{\bPsi^{-1}} \calP_{\bPsi} (\bar \bv-\bar \bv_I)(\bar a)|^2\\
&\le \|\calP_{\bPsi^{-1}}\|^2_{L_\infty(K)} \sum_{a\in \calN_K} h^{2-2m} |(\bv-\bv_I)(a)|^2\lesssim   \sum_{a\in \calN_K} h^{2-2m+4k}|\bv_{K_a}(a)|^2.
\end{align*}
A similar scaling argument shows $h^2 \sum_{a\in \calN_K} |\bv_{K_a}(a)|^2\lesssim  \|\bv\|_{L_2(\omega_K)}^2$.
Therefore
\[
\|\pt \bv - \pt \bv_I\|_{H^m(K^\gamma)}\lesssim h^{2k-m} \|\bv\|_{L_2(\omega_K)}\lesssim h^{2k-m} \|\pt{\bv}\|_{L_2(\omega_{K^\gamma})},
\]
and the result \eqref{eqn:NTS1} follows by summing over $K^\gamma\in \calT^\gamma_{h,k}$.
\end{proof}

Note that $\pt \bv_I$ is {not} continuous on $\gamma$, but is single-valued at the 
(projected) nodal points on $\gamma$. We  exploit this fact
to derive estimates of the jump across edges of 
the mesh $\calT_{h,k}^\gamma$ 
using error estimates of Gauss-Lobatto
integration rules.

\begin{lemma}\label{lem:JUMP}
For given $\bv\in \bV_h$, let $\bv_I$ be defined by \eqref{eqn:bvcConstruction}, 
and let ${\bf G}$ be a piecewise smooth matrix-valued function on $\gamma$.
Then there holds on all $e\in \calE_{h,k}$,
\begin{equation}
\label{eqn:JumpEst}
\left|\int_{e^\gamma} \{{\bf G}\} :\jump{\pt \bv_I}\right| \lesssim h^{r+s}  \big(\|{\bf G}\|_{W^{2r}_\infty(K_+^\gamma)}+\|{\bf G}\|_{W^{2r}_\infty(K_-^\gamma)}\big) \big( \| \bv_I \|_{H^s( K_+)}+\|\bv_I\|_{H^s(K_-)}\big)\quad s=0,1,2,\ldots,r,
\end{equation}
where $e = \p K_+\cap \p K_-$ and $e^\gamma = \bp(e)= {\p K^\gamma_+\cap \p K^\gamma_-}$.
\end{lemma}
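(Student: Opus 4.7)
The strategy is to use Gauss--Lobatto quadrature together with the fact that, by the construction of $\bv_I$, the trace of $\pt \bv_I$ is single-valued at the $\bp$-projected edge nodes.  Indeed, \eqref{eqn:bvcConstruction} gives $\calP_{\bp}(a)\bv_I|_K(a) = \calP_{\bp}(a)\bv_{K_a}(a)$ for every $K\in\calT_a$, so $\pt \bv_I(\bp(a))$ is independent of the choice of $K\in\calT_a$, and therefore $\jump{\pt \bv_I}(\bp(a)) = 0$ at every node $a\in\calN_{h,k}\cap{\rm cl}(e)$.

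Parametrize $e^\gamma$ by $\Phi := \bp\circ\bPsi\circ\tau:[0,1]\to e^\gamma$ with $\tau:[0,1]\to\bar e$ affine.  Under the Gauss--Lobatto hypothesis, the $\bp$-projections of the edge DOFs on $e^\gamma$ are precisely $\Phi(\hat t_0),\ldots,\Phi(\hat t_r)$ for the standard $(r+1)$-point Gauss--Lobatto nodes $\{\hat t_i\}_{i=0}^{r}\subset[0,1]$.  Setting $g(t) := (\{\bG\}:\jump{\pt \bv_I})(\Phi(t))|\Phi'(t)|$, the previous step yields $g(\hat t_i) = 0$, so the associated Gauss--Lobatto quadrature $Q_{GL}(g)$ vanishes identically.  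Since the rule has degree of precision $2r-1$, the Peano kernel theorem yields
\[
\left|\int_{e^\gamma}\{\bG\}:\jump{\pt \bv_I}\right| = \left|\int_0^1 g - Q_{GL}(g)\right| \lesssim \|g^{(2r)}\|_{L_\infty([0,1])}.
\]

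The remaining task is to show $\|g^{(2r)}\|_{L_\infty([0,1])}\lesssim h^{r+s}\sum_{\pm}\|\bG\|_{W^{2r}_\infty(K_\pm^\gamma)}\|\bv_I\|_{H^s(K_\pm)}$.  Expanding via the Leibniz and chain rules, using the norm equivalence $\|\pt \bv_I\|_{W^m_\infty(K_\pm^\gamma)}\approx\|\bv_I\|_{W^m_\infty(K_\pm)}$ from \eqref{eqn:PiolaPBounds} together with the inverse estimate \eqref{eqn:HOIE}, one obtains $|(f\circ\Phi)^{(j)}(t)|\lesssim h^j\max_{m\le j}\|f\|_{W^m_\infty(K_\pm^\gamma)}$ for $f := \{\bG\}:\jump{\pt \bv_I}$.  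A key calculation shows that $|\Phi'(t)| = h|\beta(t)|$ for a smooth vector-valued function $\beta$ with $|\beta|\approx 1$ (each derivative of $\beta$ along $t$ brings a factor of $\tau' \sim h$); hence the higher derivatives obey the sharper bound $|(|\Phi'|)^{(k)}(t)|\lesssim h^{k+1}$ for every $k\ge 0$, rather than the naive $\lesssim h$.  A direct count in the Leibniz sum $\sum_{j+k=2r}(f\circ\Phi)^{(j)}(|\Phi'|)^{(k)}$ then shows that each term contributes at most $h^{r+s}$ times the desired norms, which establishes the $s=r$ case.  The estimate for $s<r$ follows from the additional inverse bound $\|\bv_I\|_{H^r(K_\pm)}\lesssim h^{s-r}\|\bv_I\|_{H^s(K_\pm)}$.

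The principal technical hurdle is the sharp scaling $|(|\Phi'|)^{(k)}|\lesssim h^{k+1}$: a naive bound $\lesssim h$ would only yield $\|g^{(2r)}\|\lesssim h^s$, losing a full factor of $h^r$.  Combined with the saturation of \eqref{eqn:HOIE} at $m=r$ (reflecting the polynomial degree $r$ of the reference function $\hat{\bv}$), this refined scaling is what produces the correct $h^{r+s}$ order.
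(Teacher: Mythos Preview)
Your proposal is correct and follows essentially the same route as the paper. The only cosmetic difference is the choice of reference domain: the paper pulls the integral back to the affine edge $\bar e$ (of length $\approx h$) via $\bp$ and $\bPsi$, applies the Gauss--Lobatto error estimate there to obtain the factor $h^{2r+1}$, and separately bounds the weight $\sigma_{\bar e}=|\nabla\bp\,\bt_e|\circ\bPsi\,|\nabla\bPsi\,\bt_{\bar e}|$ in $W^{2r}_\infty(\bar e)$ by a constant; you instead parametrize by $[0,1]$ and recover the same $h$-powers through the explicit scaling $|\Phi^{(i)}|\lesssim h^i$ and $|(|\Phi'|)^{(k)}|\lesssim h^{k+1}$. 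Both arguments then finish with the inverse estimate \eqref{eqn:HOIE} to convert $\|\bv_I\|_{W^{2r}_\infty(K_\pm)}$ into $h^{s-r-1}\|\bv_I\|_{H^s(K_\pm)}$.
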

\begin{proof}
Let $e$ be an edge of $\calT_{h,k}$, and let $\bar e$ be an edge of $\bar \calT_h$
with $e = \bPsi(\bar e)$. 
By a change of variables, we have
\begin{align*}
\int_{e^\gamma}  \{{\bf G}\} :\jump{\pt \bv_I} = \int_e ( \{{\bf G}\} :\jump{\pt \bv_I})\circ \bp |\nab \bp \bt_e| = \int_{\bar e} \big((\{{\bf G}\}  :\jump{\pt \bv_I}) \circ \bp |\nab\bp \bt_e|\big)\circ \bPsi  |\nab \bPsi \bt_{\bar e}|.
\end{align*}
Set $\bar{\pt \bv}_I = \pt{\bv}_I \circ \bp \circ \bPsi$, $\bar {\bf G} = {\bf G}\circ \bp \circ \bPsi$, and $\sigma_{\bar e} =  |\nab\bp \bt_e|\circ \bPsi  |\nab \bPsi \bt_{\bar e}|$ so that
\begin{align*}
\int_{e^\gamma}  \{{\bf G}\} :\jump{\pt \bv_I} = \int_{\bar e}  \{ \bar {\bf G}\}  :\big[{\bar{\pt\bv}_I}\big] \sigma_{\bar e},
\end{align*}
where $\big[{\bar{\pt\bv}_I}\big] = \bar{\pt \bv}_I\big|_{\bar K_+} \otimes (\bn_{K^\gamma_+}\circ \bp\circ \bPsi)+
\bar{\pt \bv}_I\big|_{\bar K_-} \otimes (\bn_{K^\gamma_-}\circ \bp\circ \bPsi)$.

Because  ${\bar{\pt\bv}_I}$ is single valued at the Gauss-Lobatto nodes and $(\bn_{K^\gamma_+}\circ \bp\circ \bPsi) = -(\bn_{K^\gamma_-}\circ \bp\circ \bPsi)$ on $\bar e$,
we conclude  $ \{{\bf G}\}  :\big[{\bar{\pt\bv}_I}\big]$ vanishes at the Gauss-Lobatto points of $\bar e$. By the error estimate of the $(r+1)$-point Gauss-Lobatto rules, we have
\begin{align}\label{eqn:GLEstimate}
\left|\int_{e^\gamma}  \{ {\bf G}\}  :\jump{\pt \bv_I}\right|\lesssim h^{2r+1} \big(\|\bar {\bf G}\|_{W^{2r}_\infty(\bar K_+)}+\|\bar {\bf G}\|_{W^{2r}_\infty(\bar K_-)}\big)\big( \| \bar {\pt\bv}_I\|_{W^{2r}_\infty(\bar K_+)}+
\|\bar{\pt\bv}_I\|_{W^{2r}_\infty(\bar K_-)}\big) \|\sigma_{\bar e}\|_{W^{2r}_{\infty}(\bar e)},
\end{align}
where $\bar K_{\pm}\in \bar \calT_h$ with $\bar e = \p \bar K_+\cap  \p \bar K_-$. 

By \eqref{eqn:bPsiHSBound} and \eqref{eqn:Ptang}, there holds $\| |\nab \bPsi \bt_{\bar e}|\|_{W^{2r}_\infty(\bar e)} \lesssim 1$.
Likewise, since $\bt_e\circ \bPsi = \nab \bPsi \bt_{\bar e}/|\nab \bPsi \bt_{\bar e}|$, we have $\|\bt_e\|_{W^{2r}_\infty(e)}\lesssim 1$
by \eqref{eqn:bPsiHSBound} and \eqref{eqn:Ptang}. It then follows 
from the chain rule, \eqref{eqn:bPsiHSBound}, \eqref{eqn:GammaPsmooth},  \eqref{eqn:Ptang} , and a change of variables (cf.~Lemma \ref{lem:ChangeVarE}), 
that $\||\nab \bp \bt_e|\circ \bPsi\|_{W^{2r}_\infty(\bar e)}\lesssim \||\nab \bp \bt_e|\|_{W^{2r}_\infty(e)}\lesssim 1$.
Consequently, we have
\begin{align}\label{eqn:SigmaBound}
\|\sigma_{\bar e}\|_{W^{2r}_{\infty}(\bar e)}\lesssim 1.
\end{align}
By \eqref{eqn:GammaPsmooth}, \eqref{eqn:bPsiHSBound} and a change of variables, we have 
\begin{equation}
\label{eqn:ptbvcBound}
\begin{split}
\|\bar {\pt \bv}_I \|_{W^{2r}_\infty(\bar K_+)} + \|\bar {\pt\bv}_I \|_{W^{2r}_{\infty}(\bar K_-)}&\lesssim \| {\pt\bv}_I \|_{W^{2r}_{\infty}( K^\gamma_+)}+\|\pt\bv_I\|_{W^{2r}_\infty(K^\gamma_-)},\\
 \|\bar  {\bf G} \|_{W^{2r}_{\infty}(\bar K_+)}+ \|\bar  {\bf G} \|_{W^{2r}_{\infty}(\bar K_-)}&\lesssim  \| {\bf G} \|_{W^{2r}_{\infty}( K^\gamma_+)}+\|{\bf G}\|_{W^{2r}_\infty(K^\gamma_-)} .
 \end{split}
 \end{equation}
  We use \eqref{eqn:GLEstimate}, \eqref{eqn:SigmaBound}, and \eqref{eqn:ptbvcBound} along with \eqref{eqn:ugh} to conclude
\begin{align}\label{eqn:GLEstimate2}
\left|\int_{e^\gamma}  \{{\bf G}\} :\jump{\pt \bv_I}\right|\lesssim h^{2r+1} \big(\| {\bf G} \|_{W^{2r}_{\infty}( K^\gamma_+)} +\|{\bf G}\|_{W^{2r}_\infty(K^\gamma_-)}\big)\big(  \| \bv_I \|_{W^{2r}_{\infty}( K_+)}
+\| \bv_I \|_{W^{2r}_{\infty}( K_-)}\big),
\end{align}
and the desired result now follows from the inverse estimate \eqref{eqn:HOIE}. 
 \end{proof}


\begin{lemma}
\label{lem:edgejumpest}
For given $\bv\in \bV_h$, let $\bv_I$ be defined by \eqref{eqn:bvcConstruction}.  Then for $0 \le s \le r$, 
\begin{align}\label{eqn:NTS3}
\sum_{e\in \calE_{h,k}} h^{-1}  \|\jump{\pt{\bv}_I}\|_{L_2(e^\gamma)}^2 \lesssim h^{2s} \|\pt\bv_I\|_{H_{h}^s(\gamma)}^2.
\end{align}
\end{lemma}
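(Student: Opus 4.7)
My plan is to reduce the tensor-valued jump to a scalar jump that vanishes at the projected Gauss--Lobatto nodes, then combine the standard Lagrange interpolation error in $L_\infty$ with the inverse estimate \eqref{eqn:HOIE}.

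First, because $\bv_I\in \bH({\rm div}_{\Gamma_{h,k}};\Gamma_{h,k})$ and $\calP_\bp$ preserves co-normal traces, $\pt{\bv}_I\cdot\bn^\gamma$ is single-valued across every $e^\gamma=\bp(\partial K_+\cap \partial K_-)$. Hence $\pt{\bv}_I|_+-\pt{\bv}_I|_-$ is tangent to $e^\gamma$ along $\bt_{e^\gamma}$, yielding the pointwise identity
\[
|\jump{\pt{\bv}_I}|^2 = w^2,\qquad w:=(\pt{\bv}_I|_+-\pt{\bv}_I|_-)\cdot\bt_{e^\gamma}.
\]
The construction \eqref{eqn:bvcConstruction} of $\bv_I$ renders $\pt{\bv}_I$ single-valued at $\bp(a)$ for every $a\in \calN_{h,k}$; in particular $w$ vanishes at the $(r+1)$ projected Gauss--Lobatto nodes on $e^\gamma$.

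Second, parameterizing $e^\gamma$ by arc length on an interval of length $L\approx h$ and using the standard Lagrange remainder (the zero polynomial interpolates $w$ at those $r+1$ nodes) gives
\[
\|w\|_{L_\infty(e^\gamma)}\lesssim h^{r+1}\|w\|_{W^{r+1}_\infty(e^\gamma)}.
\]
A Leibniz expansion, trivial $L_\infty$-trace, and $\|\bt_{e^\gamma}\|_{W^{r+1}_\infty(e^\gamma)}\lesssim 1$ (which follows from \eqref{eqn:GammaPsmooth}, \eqref{eqn:bPsiHSBound}, \eqref{eqn:Ptang}) produce $\|w\|_{W^{r+1}_\infty(e^\gamma)}\lesssim \sum_{\pm}\|\pt{\bv}_I\|_{W^{r+1}_\infty(K^\gamma_\pm)}$. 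Invoking \eqref{eqn:HOIE} at $m=r+1$ (so $\min\{r,m\}=r$) together with the norm equivalence \eqref{eqn:ugh} yields, for $0\le s\le r$,
\[
\|\pt{\bv}_I\|_{W^{r+1}_\infty(K^\gamma)}\lesssim h^{-1+s-r}\|\pt{\bv}_I\|_{H^s(K^\gamma)}.
\]
Chaining these estimates and using $\|w\|_{L_2(e^\gamma)}^2\le|e^\gamma|\,\|w\|_{L_\infty(e^\gamma)}^2\lesssim h\|w\|_{L_\infty(e^\gamma)}^2$ gives $\|\jump{\pt{\bv}_I}\|_{L_2(e^\gamma)}^2\lesssim h^{2s+1}\sum_{\pm}\|\pt{\bv}_I\|_{H^s(K^\gamma_\pm)}^2$. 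Dividing by $h$ and summing over $e\in\calE_{h,k}$ (invoking shape regularity so that each $K^\gamma$ contributes to at most three edges) produces \eqref{eqn:NTS3}.

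The main delicate point is the $L_\infty$-based route. A Hilbert-space analogue that combines the Poincar\'e-type bound $\|w\|_{L_2(e^\gamma)}\lesssim h^{r+1}\|w\|_{H^{r+1}(e^\gamma)}$ with the standard $L_2$-trace would lose an additional $h^{-1/2}$ factor from the trace and fall short by exactly one power of $h$. The $L_\infty$ argument sidesteps this because $L_\infty$-trace is mere restriction and \eqref{eqn:HOIE} is already phrased in the direction $W^{r+1}_\infty\!\leftarrow\! H^s$, which is precisely matched to the $(r+1)$-node Lagrange remainder. I do not expect to use Lemma \ref{lem:JUMP} here: that result produces bounds on integrals paired with externally supplied smooth test matrices $\bf G$, whereas the present claim requires a stand-alone $L_2$-bound on the jump itself.
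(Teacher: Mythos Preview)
Your proof is correct and arrives at \eqref{eqn:NTS3}, but it follows a genuinely different route from the paper's. The paper constructs an auxiliary \emph{globally continuous} approximation $\pt{\bv}_c\in{\bm C}(\gamma)$ by taking the elementwise degree-$r$ Lagrange interpolant $\bar\bw$ of $\pt{\bv}_I\circ\bp\circ\bPsi$ on the affine surface (well-defined since $\pt{\bv}_I$ is single-valued at nodes), then setting $\pt{\bv}_c=\bPi\,\bar\bw\circ\bPsi^{-1}\circ\bp^{-1}$. A two-dimensional Lagrange interpolation error bound combined with an inverse estimate yields $h^m\|\pt{\bv}_c-\pt{\bv}_I\|_{H^m(K^\gamma)}\lesssim h^{s+1}\|\pt{\bv}_I\|_{H^s(K^\gamma)}$, and the result follows from $\jump{\pt{\bv}_I}=\jump{\pt{\bv}_I-\pt{\bv}_c}$ together with the scaled trace inequality.

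Your argument instead works edge-by-edge in one dimension: you first exploit the $\bH({\rm div})$-conformity of $\bv_I$ (and the co-normal preservation of $\calP_{\bp}$) to reduce the tensor jump to a scalar $w$, then use the one-dimensional Lagrange remainder in $L_\infty$ at the $(r{+}1)$ projected nodal zeros, followed by the inverse estimate \eqref{eqn:HOIE}. This is more direct for the lemma itself and makes the $\bH({\rm div})$ structure of $\bv_I$ do real work. Neither argument actually needs the Gauss--Lobatto placement; only the node count matters here. The trade-off is that the paper's construction of $\pt{\bv}_c$ is not merely a proof device for this lemma---it is reused immediately afterward to prove the discrete Korn inequality \eqref{korn:contsurf}, so your route, while cleaner locally, forfeits that byproduct.
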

\begin{proof}
We construct $\bv_c \approx \bv_I$ such that $\pt{\bv}_c \in {\bm C}(\gamma)$.
Let $\bar \bw$ be the continuous, {tri}-piecewise polynomial of degree $r$ (w.r.t. $\bar \calT_h$) such that $\bar \bw(\bar a) = \pt \bv_{K_a}(\bp(a))= \pt \bv_I(\bp(a))$ for $a\in \calN_{h,k}$, where $a = \bPsi(\bar a)$.  We then set 
\begin{align}
\label{vcdef}
\pt{\bv}_c = \bPi \bar \bw \circ \bPsi^{-1} \circ \bp^{-1}, 
\end{align}
and $\bar \bv_I = \calP_{\bPsi^{-1}} \bv_I$.
Using norm equivalence, we then have for $m=0,1$ and all $K^\gamma$
\[
\begin{aligned} 
\|\pt \bv_c-\pt \bv_I\|_{H^m(K^\gamma)} 
& = \|\bPi( \bar \bw \circ \bPsi^{-1} \circ \bp^{-1}- \calP_{\bp} \calP_{\bPsi} \bar \bv_I)\|_{H^m(K^\gamma)} 
\\ & \lesssim \|\bar \bw \circ \bPsi^{-1} \circ \bp^{-1}- \calP_{\bp} \calP_{\bPsi} \bar \bv_I\|_{H^m(K^\gamma)} \lesssim \|\bar \bw-(\calP_{\bp} \circ \bPsi) \calP_{\bPsi} \bar \bv_I\|_{H^m(\bar K)}.
\end{aligned} 
\]
Because $\pt \bv_I$ is single-valued at nodes and $\bar\bw$ coincides with $\pt \bv_I$ at nodes, we have that $\bar \bw$ is the elementwise degree-$r$ Lagrange interpolant of  $\pt \bv_I=\calP_{\bp} \calP_{\bPsi} \bar \bv_I$.  
We thus have 
\begin{align}
\label{vcest}
 \begin{aligned} 
h^m  \|\pt \bv_c-\pt \bv_I\|_{H^m(K^\gamma)} &  \lesssim h^{r+1} |(\calP_{\bp} \circ \bPsi) \calP_{\bPsi} \bar \bv_I|_{H^{r+1}(\bar K)}
\\ & \lesssim h^{r+1} \sum_{j=0}^{r} |(\calP_{\bp} \circ \bPsi) \calP_{\bPsi} |_{W_\infty^{r+1-j}(\bar K)} |\bar \bv_I|_{H^j(\bar K)} 
\\ & \lesssim h^{s+1} \|\bar \bv_I\|_{H^s(\bar K)} \lesssim h^{s+1} \|\pt \bv_I\|_{H^s(K^\gamma)}.
\end{aligned} \end{align}
In the last line we have used the inverse estimate $|\bar \bv_I|_{H^j(\bar K)} \lesssim h^{s-j} \|\bar \bv_I\|_{H^s(\bar K)}$, $s \le j \le r$, along with \eqref{eqn:bPsiHSBound}, \eqref{eqn:PiolaBounds}, and \eqref{eqn:PiolaPBounds}.
The desired estimate then follows by noting that $\|\jump{\pt{\bv}_I}\|_{L_2(e^\gamma)}=  \|\jump{\pt{\bv}_I-\pt \bv_c}\|_{L_2(e^\gamma)}$ and then applying a standard scaled trace inequality followed by the above approximation bound.  
\end{proof}

%

\begin{lemma}\label{lem:JumpDef}
Let $\bw\in \bH^{t+1}(\gamma)$ with $1 \le t \le r-1$.
For given $\bv\in \bV_h$, let $\bv_I$ be defined by \eqref{eqn:bvcConstruction}.
Then there holds
\begin{align}\label{eqn:JumpDef}
 \sum_{e\in \calE_{h,k}} \int_{e^\gamma} {\rm Def}_\gamma \bw: \jump{\pt \bv_I}\lesssim h^{s+t} \|\bw \|_{H^{t+1}(\gamma)} \|\pt \bv_I \|_{H^s_h(\gamma)},\qquad s=0,1,\ldots,r.
 \end{align}
 Consequently, by scaling and Lemma \ref{lem:NTS1},
\begin{align}\label{eqn:JumpDef2}
 \sum_{e\in \calE_{h,k}} \int_{e^\gamma} {\rm Def}_\gamma \bw: \jump{\pt \bv}\lesssim h^{s+t} \|\bw \|_{H^{t+1}(\gamma)} \|\pt \bv\|_{H^s_h(\gamma)}+h^{2k-1} \|\bw\|_{H^2(\gamma)} \|\pt \bv\|_{L_2(\gamma)},\quad s=0,1,\ldots,r.
 \end{align}
\end{lemma}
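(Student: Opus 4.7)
The plan is a smooth/rough decomposition of $\Def \bw$ that isolates where Gauss--Lobatto exactness can be exploited.  Let $\bw_h$ be a piecewise polynomial approximation of $\bw$ of degree $t$ on $\calT_{h,k}^\gamma$, obtained by lifting to $\gamma$ a Scott--Zhang interpolant on $\bar\calT_h$ of the pullback $\bw\circ\bp\circ\bPsi$, so that
\[
\|\bw-\bw_h\|_{H^m(K^\gamma)}\lesssim h^{t+1-m}\|\bw\|_{H^{t+1}(\omega_{K^\gamma})},\qquad 0\le m\le t+1.
\]
Since $\bw\in \bH^1(\gamma)$ is single-valued across edges, $\{\Def \bw\}=\Def \bw$ on each $e^\gamma$, and we split
\[
\int_{e^\gamma}\Def\bw:\jump{\pt\bv_I}
=\int_{e^\gamma}\{\Def(\bw-\bw_h)\}:\jump{\pt\bv_I}
+ \int_{e^\gamma}\{\Def\bw_h\}:\jump{\pt\bv_I}.
\]

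For the rough term, Cauchy--Schwarz over edges combined with a scaled trace inequality and the interpolation bound yields $\big(\sum_e \|\Def(\bw-\bw_h)\|^2_{L_2(e^\gamma)}\big)^{1/2}\lesssim h^{t-1/2}\|\bw\|_{H^{t+1}(\gamma)}$, which together with the jump bound $\big(\sum_e\|\jump{\pt\bv_I}\|^2_{L_2(e^\gamma)}\big)^{1/2}\lesssim h^{s+1/2}\|\pt\bv_I\|_{H^s_h(\gamma)}$ from Lemma \ref{lem:edgejumpest} delivers the desired $h^{s+t}\|\bw\|_{H^{t+1}}\|\pt\bv_I\|_{H^s_h}$.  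For the smooth term I apply Lemma \ref{lem:JUMP} with $\bG=\Def \bw_h$.  The main subtlety is controlling $\|\Def\bw_h\|_{W^{2r}_\infty(K^\gamma)}$: since $\bar\bw_h$ is polynomial of degree $t$ on $\bar K$, its derivatives of order $>t$ vanish, and a Faà di Bruno/chain-rule computation using the uniform smoothness of $\bp\circ\bPsi$ (cf.~\eqref{eqn:GammaPsmooth} and \eqref{eqn:bPsiHSBound}) together with the reference-element inverse estimate $|\bar\bw_h|_{W^j_\infty(\bar K)}\lesssim h^{-1}|\bar\bw_h|_{H^j(\bar K)}$ for $j\le t$ gives
\[
\|\Def\bw_h\|_{W^{2r}_\infty(K^\gamma)}\lesssim \|\bw_h\|_{W^{2r+1}_\infty(K^\gamma)}\lesssim h^{-1}\|\bar\bw_h\|_{H^t(\bar K)}\lesssim h^{-1}\|\bw\|_{H^{t+1}(\omega_{K^\gamma})}.
\]
Substituting into Lemma \ref{lem:JUMP} yields the smooth-term bound $h^{r+s-1}\|\bw\|_{H^{t+1}(\omega_{K^\gamma})}\|\bv_I\|_{H^s(K_\pm)}$; the hypothesis $t\le r-1$ is precisely what is needed to conclude $h^{r+s-1}\le h^{s+t}$.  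Summing over $e\in\calE_{h,k}$ and converting $\|\bv_I\|_{H^s_h(\Gamma_{h,k})}\approx \|\pt\bv_I\|_{H^s_h(\gamma)}$ via \eqref{eqn:ugh} completes the proof of \eqref{eqn:JumpDef}.

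The second estimate \eqref{eqn:JumpDef2} follows by writing $\jump{\pt\bv}=\jump{\pt\bv_I}+\jump{\pt(\bv-\bv_I)}$.  For the first contribution one uses \eqref{eqn:JumpDef} together with $\|\pt\bv_I\|_{H^s_h(\gamma)}\le\|\pt\bv\|_{H^s_h(\gamma)}+\|\pt(\bv-\bv_I)\|_{H^s_h(\gamma)}\lesssim \|\pt\bv\|_{H^s_h(\gamma)}+h^{2k-s}\|\pt\bv\|_{L_2(\gamma)}$ (combining Lemma \ref{lem:NTS1} with the polynomial inverse estimate for $s\ge 2$), the resulting higher-order correction being subsumed into the $h^{2k-1}\|\bw\|_{H^2}\|\pt\bv\|_{L_2}$ term.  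For the second contribution, Cauchy--Schwarz with scaled trace inequalities gives $\big(\sum_e\|\Def\bw\|^2_{L_2(e^\gamma)}\big)^{1/2}\lesssim h^{-1/2}\|\bw\|_{H^2(\gamma)}$ and, via Lemma \ref{lem:NTS1} plus a scaled trace, $\big(\sum_e\|\jump{\pt(\bv-\bv_I)}\|^2_{L_2(e^\gamma)}\big)^{1/2}\lesssim h^{2k-1/2}\|\pt\bv\|_{L_2(\gamma)}$, whose product is exactly the $h^{2k-1}\|\bw\|_{H^2}\|\pt\bv\|_{L_2}$ term.  The main technical obstacle is the smooth-term analysis in \eqref{eqn:JumpDef}: Lemma \ref{lem:JUMP} requires a $W^{2r}_\infty$ estimate on $\bG$, which cannot come from the $\bH^{t+1}$ regularity of $\bw$ alone and is recovered only by exploiting that $\bw_h$ is polynomial of low degree $t\le r-1$, losing a single factor of $h^{-1}$ but no more.
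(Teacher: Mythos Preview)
Your argument is correct and follows essentially the same rough/smooth decomposition as the paper: split $\Def\bw$ into an approximable piece and a remainder, handle the remainder via Cauchy--Schwarz, trace inequalities, and Lemma~\ref{lem:edgejumpest}, and handle the smooth piece via Lemma~\ref{lem:JUMP}.  The only difference is the choice of smooth approximant: the paper takes ${\bf G}_h$ to be the elementwise $L_2$ projection of $\Def\bw\circ\bp\circ\bPsi$ onto discontinuous polynomials of degree $r-2$, which makes the $W^{2r}_\infty$ inverse estimate for $\bG$ immediate and yields the sharp scaling $\|{\bf G}_h\|_{W^{2r}_\infty}\lesssim h^{t-r}$ (so that Lemma~\ref{lem:JUMP} gives exactly $h^{s+t}$), whereas your choice $\bG=\Def\bw_h$ with $\bw_h$ a degree-$t$ Scott--Zhang interpolant requires the chain-rule/Fa\`a di Bruno detour and lands at $h^{r+s-1}$, which is then upgraded to $h^{s+t}$ only via the hypothesis $t\le r-1$.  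Both routes are valid; the paper's is slightly more direct for the $W^{2r}_\infty$ control, while yours keeps the approximation at the level of $\bw$ rather than $\Def\bw$.
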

\begin{proof}
Let ${\bf G}_{h} \circ \bp \circ \bPsi$ be the componentwise $L_2$ projection on $\bar\Gamma_h$ of ${\rm Def}_\gamma \bw \circ \bp \circ \bPsi$ onto the (discontinuous) piecewise polynomials of degree $(r-2)$ on $\bar\calT_h$.  Using standard scaled trace and approximation bounds, along with inverse estimates, we have
 \begin{equation}\label{eqn:DefConst}
 \sum_{e\in \calE_{h,k}} h \|{\rm Def}_\gamma \bw -\{{\bf G}_h\} \|_{L_2(e^\gamma)}^2 \lesssim h^{2t} \|{\rm Def}_\gamma \bw\|_{H^{t}(\gamma)}^2\lesssim h^{2t} \|\bw\|_{H^{t+1}(\gamma)}^2,
\end{equation}
and
\begin{equation}
\label{eqn:DefStabG}
\begin{split}
\sum_{K\in \calT_{h,k}} &  \|{\bf G}_h\|^2_{W^{2r}_\infty(K^\gamma)}  \lesssim   \sum_{K\in \calT_{h,k}} h^{-2} \|{\bf G}_h\|^2_{H^{2r}(K^\gamma)}
\lesssim  \sum_{K\in \calT_{h,k}}  h^{-2} \|{\bf G}_h\|^2_{H^{r-2}(K^\gamma)}
\\ & \lesssim  \sum_{K\in \calT_{h,k}}  h^{-2} \|{\bf G}_h\|^2_{H^{r-1}(K^\gamma)} \lesssim h^{-2+2(t-r+1)}\sum_{K\in \calT_{h,k}}   \|{\bf G}_h\|_{H^t(K^\gamma)}^2 
\lesssim  h^{2(t-r)} \|\bw\|_{H^{t+1}(\gamma)}^2.
\end{split}
\end{equation}

We write
\begin{equation}
\label{eqn:ASD}
\begin{split}
 \sum_{e\in \calE_{h,k}} \int_{e^\gamma} {\rm Def}_\gamma \bw: \jump{\pt \bv_I}
 & = 
  \sum_{e\in \calE_{h,k}} \int_{e^\gamma} \big({\rm Def}_\gamma \bw-\{{\bf G}_h\} \big): \jump{\pt \bv_I}
  +   \sum_{e\in \calE_{h,k}} \int_{e^\gamma} \{{\bf G}_h\} :\jump{\pt \bv_I}.
  \end{split}
  \end{equation}
  
  By \eqref{eqn:NTS3} and \eqref{eqn:DefConst},  there holds
\begin{align}\label{eqn:FirstTerm}
\sum_{e\in \calE_{h,k}} \int_{e^\gamma} \big({\rm Def}_\gamma \bw-\{{\bf G}_h\} \big): \jump{\pt \bv_I}\lesssim h^{s+t} \|\bw\|_{H^{t+1}(\gamma)} \|\pt\bv_I\|_{H^s(\gamma)},
\end{align}
and by \eqref{eqn:JumpEst} and \eqref{eqn:DefStabG}, we have
\begin{align}\label{eqn:SecondTerm}
  \sum_{e\in \calE_{h,k}} \int_{e^\gamma} \{{\bf G}_h\} :\jump{\pt \bv_I}\lesssim h^{s+t} \|\bw\|_{H^{t+1}(\gamma)}\|\pt\bv_I\|_{H^s(\gamma)}.
  \end{align}
Combining \eqref{eqn:ASD} and \eqref{eqn:FirstTerm}--\eqref{eqn:SecondTerm} yields the desired result.
\end{proof}

The following discrete Korn inequalities are easy consequences of several tools developed above.
\begin{lemma}
Given $\bv \in \bV_h$, there holds
\begin{align}
\label{korn:contsurf}
\|\pt{\bv}\|_{H_h^1(\gamma)} \lesssim \|\pt{\bv} \|_{L_2(\gamma)} + \|{\rm Def}_{\gamma, h} \pt{\bv} \|_{L_2(\gamma)}
\end{align}
and
\begin{align}
\label{korn:discsurf}
\|\bv\|_{H_h^1(\Gamma_{h,k})} \lesssim \|\bv \|_{L_2(\Gamma_{h,k})} + \|{\rm Def}_{\Gamma_{h,k},h}\bv\|_{L_2(\Gamma_{h,k})}.
\end{align}
\end{lemma}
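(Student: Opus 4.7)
The strategy is standard for discrete Korn inequalities in nonconforming finite element settings: derive a broken Korn inequality on the smooth surface $\gamma$ which picks up jump penalty terms on the skeleton, and then use the weak continuity properties of $\bV_h$ (encoded in Lemmas \ref{lem:NTS1} and \ref{lem:edgejumpest}) to show that these jump terms are controlled by the $L_2$-norm.

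For \eqref{korn:contsurf}, I would invoke a broken Korn inequality on the smooth compact surface $\gamma$ without boundary: for piecewise $\bH^1$ tangential vectorfields $\bw$ with respect to the partition $\calT_{h,k}^\gamma$,
\[
\|\bw\|_{H^1_h(\gamma)}^2 \lesssim \|\bw\|_{L_2(\gamma)}^2 + \|\Def \bw\|_{L_2(\gamma)}^2 + \sum_{e\in \calE_{h,k}} h^{-1}\|\jump{\bw}\|_{L_2(e^\gamma)}^2.
\]
This is the surface analogue of Brenner's classical broken Korn inequality and can be established through standard partition-of-unity/extension arguments. Applying it to $\bw=\pt \bv$ reduces the proof to bounding the jump term. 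Writing $\jump{\pt\bv} = \jump{\pt \bv_I} + \jump{\pt\bv - \pt \bv_I}$, the first piece is controlled directly by Lemma \ref{lem:edgejumpest} with $s=0$ combined with Lemma \ref{lem:NTS1}:
\[
\sum_{e\in \calE_{h,k}} h^{-1} \|\jump{\pt \bv_I}\|_{L_2(e^\gamma)}^2 \lesssim \|\pt \bv_I\|_{L_2(\gamma)}^2 \lesssim \|\pt\bv\|_{L_2(\gamma)}^2.
\]
For the second piece, a scaled trace inequality on each $K^\gamma$ together with the proof of Lemma \ref{lem:NTS1} (which gives the elementwise bound $\|\pt\bv - \pt\bv_I\|_{H^m(K^\gamma)} \lesssim h^{2k-m}\|\pt\bv\|_{L_2(\omega_{K^\gamma})}$ for $m=0,1$) yields
\[
\sum_{e\in \calE_{h,k}} h^{-1}\|\jump{\pt \bv - \pt \bv_I}\|_{L_2(e^\gamma)}^2 \lesssim h^{4k-2}\|\pt\bv\|_{L_2(\gamma)}^2 \lesssim \|\pt\bv\|_{L_2(\gamma)}^2
\]
since $k\ge 1$. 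This establishes \eqref{korn:contsurf}.

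For \eqref{korn:discsurf}, I would transfer the inequality from $\gamma$ to $\Gamma_{h,k}$ via the Piola transform. Two ingredients suffice: first, the norm equivalences $\|\pt\bv\|_{H^m(K^\gamma)} \approx \|\bv\|_{H^m(K)}$ from \eqref{eqn:ugh}, which elementwise convert $\pt\bv$-norms into $\bv$-norms; and second, Lemma \ref{lem:DefCons}, which gives
\[
\|\Def \pt \bv - ({\rm Def}_{\Gamma_{h,k}} \bv)\circ \bp^{-1}\|_{L_2(K^\gamma)} \lesssim h^k\(\|\bv\|_{H^1(K)} + \|\bv\|_{L_2(K)}\).
\]
Summing and using a change of variables shows that $\|\Def \pt \bv\|_{L_2(\gamma)}$ and $\|{\rm Def}_{\Gamma_{h,k},h}\bv\|_{L_2(\Gamma_{h,k})}$ agree up to an $h^k$-order perturbation of $\|\bv\|_{H^1_h(\Gamma_{h,k})} + \|\bv\|_{L_2(\Gamma_{h,k})}$. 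Applying \eqref{korn:contsurf} and absorbing this small term by kickback (for $h$ sufficiently small) yields \eqref{korn:discsurf}.

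The main non-routine step is justifying the broken Korn inequality on the surface $\gamma$ for piecewise $\bH^1$ tangential fields, since Brenner's argument is phrased for Euclidean polygonal domains. On a smooth closed surface without boundary this should follow via standard localization using a finite atlas of charts that flattens $\gamma$ and maps $\calT_{h,k}^\gamma$ locally to a Euclidean triangulation, reducing to the Euclidean result; all other steps are trace inequalities, kickback, and direct applications of the lemmas developed above.
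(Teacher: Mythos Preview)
Your proposal is correct but follows a somewhat different route from the paper.  The paper does not invoke a broken Korn inequality with jump penalties on $\gamma$.  Instead, it uses the continuous approximation $\pt\bv_c\in \bC(\gamma)\cap \bH^1_T(\gamma)$ constructed in the proof of Lemma~\ref{lem:edgejumpest} (see \eqref{vcdef}), combines \eqref{eqn:NTS1} with \eqref{vcest} (taking $m=1$, $s=0$) to obtain $\|\pt\bv-\pt\bv_c\|_{H^1_h(\gamma)}\lesssim \|\pt\bv\|_{L_2(\gamma)}$, and then applies the \emph{continuous} surface Korn inequality to $\pt\bv_c$ together with a triangle inequality.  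The transfer to $\Gamma_{h,k}$ via \eqref{eqn:DefChain} and kickback is exactly as you describe.

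The practical difference is that the paper sidesteps entirely the need to establish a jump-penalty broken Korn inequality on the smooth surface $\gamma$, which you correctly flag as the one non-routine step in your argument.  Since the conforming correction $\pt\bv_c$ has already been built in Lemma~\ref{lem:edgejumpest}, the paper's route is shorter and fully self-contained, relying only on the standard continuous Korn inequality for $\bH^1_T(\gamma)$.  Your approach is perfectly viable and is the more ``textbook'' nonconforming strategy, but it requires the additional chart-localization argument you sketch, whereas the paper gets the result essentially for free from tools already in hand.
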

\begin{proof}
Let $\pt{\bv}_c\in {\bm C} (\gamma) \cap {\bm H}_T^1(\gamma)$ be as defined in \eqref{vcdef}.  Combining \eqref{eqn:NTS1} and \eqref{vcest} with $m=1$ and $s=0$ yields
$$\|\pt{\bv}-\pt{\bv}_c\|_{H_h^1(\gamma)} \lesssim \|\pt{\bv}\|_{L_2(\gamma)}.$$
With this relationship and \eqref{eqn:DefChain} in hand, the proofs of \eqref{korn:contsurf} and \eqref{korn:discsurf} are completed essentially as in \cite[Lemma 3.5]{DemlowNeilan23} and \cite[Lemma 3.6]{DemlowNeilan23}, respectively. 
\end{proof}

 \rev{We end this section by remarking on differences in our approach to analyzing nonconformity errors here as compared with the predecessor work \cite{DemlowNeilan23}.  There estimates for the nonconformity errors were proved by direct comparison in norm with a fully conforming relative (cf. Lemma 3.4 of that work).  As we see below, here we instead analyze nonconformity errors by using the results directly above to bound edge jump terms resulting from an integration by parts.  Also, while we do employ a fully conforming relative here in the proof of Lemma \ref{lem:edgejumpest}, our main tool in this regard is the ``near-conforming'' relative $\bv_I$ that possesses full continuity at mesh nodes but not elsewhere.  This nodal continuity is sufficient to exploit properties of Gauss-Lobatto quadrature rules and thus obtain optimal-order convergence.  }

\section{Finite element method and convergence analysis}\label{sec-FEM}
\rev{Recall that the discrete velocity space is not $\bH^1$-conforming,
and therefore the method must be based on a piecewise variational formulation.}
We define the bilinear forms
\begin{align*}
a(\bv,\bw) 
&= \int_{\gamma} \left(({\rm Def}_{\gamma,h}\bv):
 ({\rm Def}_{\gamma,h}\bw)+ \bv\cdot \bw\right),\\
 b(\bv,q)
 & = -\int_{\gamma} ({\rm div}_\gamma \bv)q,
 \end{align*}
 where ${\rm Def}_{\gamma,h}\bv|_{K^\gamma} = {\rm Def}_{\gamma}\bv|_{K^\gamma}\ \forall K^\gamma\in \calT_{h,k}^\gamma$
 is the piecewise defined deformation operator.
The variational formulation of the surface Stokes problem then reads:
 find $(\bu,p)\in \bH^1_T(\gamma)\times \mathring{L}_2(\gamma)$ such that
\begin{equation}
\label{eqn:StokesVariational}
 \begin{aligned}
 a(\bu,\bv) + b(\bv,p) & = \int_\gamma {\bm f}\cdot \bv\qquad &&\forall \bv\in \bH^1_T(\gamma),\\
 b(\bu,q)& = 0\qquad &&\forall q\in \mathring{L}_2(\gamma).
 \end{aligned}
 \end{equation}
The relevant continuous inf-sup condition for this problem is found in \cite{JankuhnEtal18}; well-posedness follows.  Here the mass term is added to the standard Stokes equations in order to avoid possible degeneracies due to the presence of Killing fields \cite{SurfaceStokes1}.  Finally, $H^2$ regularity results for surface Stokes equations posed are contained in \cite{ORZ21}, while higher-order regularity is proved in \cite{BNSPP}.  Below we assume these results without direct citation.
 
 The analogous bilinear forms defined on $\Gamma_{h,k}$ are given by
\begin{align*}
a_h(\bv,\bw) 
&= 
\int_{\Gamma_{h,k}}\left(({\rm Def}_{\Gamma_{h,k},h}\bv):
 ({\rm Def}_{\Gamma_{h,k},h}\bw)+ \bv\cdot \bw\right),\\
 b_h(\bv,q)
 & = -\int_{\Gamma_{h,k}} ({\rm div}_{\Gamma_{h,k}} \bv)q,
 \end{align*}
 where $ {\rm Def}_{\Gamma_{h,k},h}\bv|_K =  {\rm Def}_{\Gamma_{h,k}} \bv|_K\ \forall K\in \calT_{h,k}$.
 The finite element method seeks $(\bu_h,p_h)\in \bV_h \times Q_h$ such that 
 \begin{equation}
\label{eqn:FEM}
 \begin{aligned}
 a_h(\bu_h,\bv) + b_h(\bv,p_h) & = \int_{\Gamma_{h,k}} {\bm f}_h \cdot \bv\qquad &&\forall \bv\in \bV_h,\\
 b_h(\bu_h,q)& = 0\qquad &&\forall q\in Q_h.
 \end{aligned}
 \end{equation}
 The existence and uniqueness of a solution to \eqref{eqn:FEM} follows
from the inf-sup condition \eqref{eqn:THInfSup} and the discrete Korn inequality \eqref{korn:discsurf}.
Furthermore, there holds $\|\bu_h\|_{H^1_h(\Gamma_{h,k})}+\|p_h\|_{L_2(\Gamma_{h,k})}\lesssim \|{\bm f}_h\|_{L_2(\Gamma_{h,k})}$.

 \subsection{Geometric consistency estimates}

 \begin{lemma}
 \label{lem:consist1}
 Define the bilinear form
 $G_h:\bH^1_h(\Gamma_{h,k})\times \bH_h^1(\Gamma_{h,k}) \rightarrow \mathbb{R}$ by
 \begin{align}
 \label{eqn:GhDef}
 G_h(\bw,\bv) = a(\pt{\bw},\pt{\bv}) - a_h(\bw,\bv).
 \end{align}
 There holds
 \begin{align}\label{eqn:GhEstimate}
 |G_h(\bv,\bw)|\lesssim h^k \|\pt{\bv}\|_{H^1_h(\gamma)} \|\pt{\bw}\|_{H^1_h(\gamma)}.
 \end{align}
 \end{lemma}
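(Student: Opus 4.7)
\medskip
\noindent\textbf{Proof plan.}  The plan is to rewrite $a(\pt{\bw},\pt{\bv})$ as an integral over $\Gamma_{h,k}$ using the area change factor $\mu_h$, and then to split the resulting integrand into pieces that are each controlled by $h^k$ either through the defect $({\rm Def}_\gamma \pt{\bv})^e - {\rm Def}_{\Gamma_{h,k}} \bv$ (via Lemma~\ref{lem:DefCons}), through the normal-error factor $\bnu - \bnu_h = \calO(h^k)$, or through the area defect $\mu_h - 1 = \calO(h^{k+1})$.

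First, using $\int_\gamma f = \int_{\Gamma_{h,k}} f^e \mu_h$ elementwise, write
\begin{align*}
G_h(\bw,\bv) &= \sum_{K\in \calT_{h,k}} \int_K \Big[ \big(({\rm Def}_\gamma \pt{\bw})^e:({\rm Def}_\gamma \pt{\bv})^e\big)\mu_h - ({\rm Def}_{\Gamma_{h,k}}\bw):({\rm Def}_{\Gamma_{h,k}}\bv)\Big] \\
&\quad + \sum_{K\in \calT_{h,k}} \int_K \Big[ (\pt{\bw}\cdot \pt{\bv})^e \mu_h - \bw\cdot \bv\Big]=: E_1 + E_2.
\end{align*}
For $E_1$, I would add and subtract suitable terms to get the decomposition
\[
\big(({\rm Def}_\gamma \pt{\bw})^e : ({\rm Def}_\gamma \pt{\bv})^e\big)(\mu_h-1) + [({\rm Def}_\gamma \pt{\bw})^e - {\rm Def}_{\Gamma_{h,k}} \bw]:({\rm Def}_\gamma \pt{\bv})^e + ({\rm Def}_{\Gamma_{h,k}}\bw):[({\rm Def}_\gamma \pt{\bv})^e - {\rm Def}_{\Gamma_{h,k}} \bv].
\]
Using $|\mu_h - 1|\lesssim h^{k+1}$ (which follows from $|\bnu\cdot \bnu_h - 1|\lesssim h^{2k}$ and $|d|\lesssim h^{k+1}$ on $\Gamma_{h,k}$) for the first piece, and Lemma~\ref{lem:DefCons} for the remaining two, each term is bounded pointwise by $h^k$ times a product of $|\nab_{\Gamma_{h,k}}\cdot| + |\cdot|$ factors applied to $\bw$ and $\bv$. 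Cauchy--Schwarz together with the norm equivalence $\|\pt \bv\|_{H^1(K^\gamma)} \approx \|\bv\|_{H^1(K)}$ from \eqref{eqn:ugh} then gives $|E_1|\lesssim h^k \|\pt \bw\|_{H^1_h(\gamma)} \|\pt \bv\|_{H^1_h(\gamma)}$.

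For $E_2$, I would use the explicit Piola formula \eqref{eqn:Piola} to write $(\pt{\bw}\cdot \pt{\bv})^e \mu_h = \mu_h^{-1} (\bPi - d\bH)\bw\cdot (\bPi - d\bH)\bv$ on $K\in \calT_{h,k}$. Then the identity $(\bPi - {\bf I})\bv = -(\bnu \cdot \bv)\bnu = -((\bnu - \bnu_h) \cdot \bv)\bnu$ (valid because $\bnu_h \cdot \bv = 0$ for elementwise tangential discrete fields) together with $|\bnu - \bnu_h|\lesssim h^k$, $|d\bH|\lesssim h^{k+1}$, and $|\mu_h^{-1}-1|\lesssim h^{k+1}$ gives the pointwise estimate
\[
\big| (\pt{\bw}\cdot \pt{\bv})^e \mu_h - \bw\cdot \bv\big| \lesssim h^k |\bw|\,|\bv|,
\]
so that $|E_2|\lesssim h^k\|\bw\|_{L_2(\Gamma_{h,k})}\|\bv\|_{L_2(\Gamma_{h,k})}\lesssim h^k \|\pt \bw\|_{H^1_h(\gamma)}\|\pt \bv\|_{H^1_h(\gamma)}$.

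The main obstacle is essentially algebraic bookkeeping: structuring the splittings so that every appearance of $\mu_h-1$, $\bnu-\bnu_h$, or $({\rm Def}_\gamma \pt{\bv})^e - {\rm Def}_{\Gamma_{h,k}}\bv$ is isolated and hit with the right one of the $O(h^k)$ / $O(h^{k+1})$ estimates. Provided the roles of the tangential projections $\bPi$ and $\bPi_h$ are handled carefully (in particular in rewriting $\pt{\bv}$ via \eqref{eqn:Piola} on each $K$), no new tools beyond Lemma~\ref{lem:DefCons}, \eqref{eqn:normalApprox}, and the norm equivalence \eqref{eqn:ugh} are needed.
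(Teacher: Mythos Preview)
Your proposal is correct and follows essentially the same approach as the paper, which defers to \cite[Lemma~4.2]{DemlowNeilan23} and relies on the same two ingredients: Lemma~\ref{lem:DefCons} for the deformation defect in $E_1$, and the estimates $|\mu_h-1|\lesssim h^{k+1}$, $|\bnu-\bnu_h|\lesssim h^k$ for $E_2$. The paper does record the sharper bound $h^{k+1}$ for the $L_2$ part (equation \eqref{l2geo}, obtained since $\bPi\bw\cdot\bPi\bv-\bw\cdot\bv=-(\bnu\cdot\bw)(\bnu\cdot\bv)=\calO(h^{2k})$ when both fields are tangential to $\Gamma_{h,k}$), but your $h^k$ suffices for the lemma as stated.
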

 \begin{proof}
The arguments in \cite[(4.8)]{DemlowNeilan23} and surrounding may be easily adapted to the current case of higher-order surface approximations to yield
\begin{align}
\label{l2geo}
\int_{\Gamma_{h,k}} \bw\cdot \bv -\int_\gamma \pt{\bw} \cdot \pt{\bv} \lesssim h^{k+1}\|\bw\|_{L_2(\Gamma_{h,k})} \|\bv \|_{L_2(\Gamma_{h,k})}.
\end{align} 
 The desired result then directly follows from \eqref{l2geo}, \eqref{eqn:DefChain}, and the arguments in \cite[Lemma 4.2]{DemlowNeilan23}.   Therefore its proof is omitted.
 \end{proof}

 The following lemma improves the geometric consistency error, provided
 the inputted functions are sufficiently regular. 
 Its proof is found in Appendix \ref{app:GhBoundProof}.
 \begin{lemma}\label{lem:GhIBound}
 There holds for all $\bv,\bw\in \bH^2_T(\gamma)$
 \begin{equation}
 |G_h(\ipt{\bv},\ipt{\bw})|\lesssim h^{k+1} \|\bv\|_{H^2(\gamma)} \|\bw\|_{H^2(\gamma)}.
 \end{equation}
 \end{lemma}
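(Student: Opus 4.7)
The plan is to decompose $G_h(\ipt{\bv},\ipt{\bw})$ into mass and deformation contributions and to exploit the extra smoothness $\bv,\bw\in \bH^2_T(\gamma)$ to upgrade the $\calO(h^k)$ estimate implicit in Lemma~\ref{lem:consist1} to $\calO(h^{k+1})$. Because $\pt{(\ipt{\bv})}=\bv$ on $\gamma$, the change of variables $\int_\gamma f = \int_{\Gamma_{h,k}}(f\circ\bp)\mu_h$ lets us write
\[
G_h(\ipt{\bv},\ipt{\bw}) = \int_{\Gamma_{h,k}}\big[\mu_h(\bv\circ\bp)\cdot(\bw\circ\bp) - \ipt{\bv}\cdot\ipt{\bw}\big] + \int_{\Gamma_{h,k}}\big[\mu_h\,A_\bv : A_\bw - B_\bv : B_\bw\big],
\]
with $A_\bv := ({\rm Def}_\gamma\bv)\circ\bp$, $B_\bv := {\rm Def}_{\Gamma_{h,k}}\ipt{\bv}$, and analogous definitions for $\bw$. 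The first (mass) integral is already bounded at the desired order $h^{k+1}\|\bv\|_{L_2(\gamma)}\|\bw\|_{L_2(\gamma)}$ by the refined $L_2$ geometric consistency estimate~\eqref{l2geo}, with no use of the $\bH^2$ hypothesis.

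For the deformation integral I would use the algebraic split
\[
\mu_h A_\bv : A_\bw - B_\bv : B_\bw = (\mu_h-1)A_\bv : A_\bw + (A_\bv-B_\bv) : A_\bw + B_\bv : (A_\bw-B_\bw).
\]
From $|\bPsi-\bp|\lesssim h^{k+1}$ we have $d=\calO(h^{k+1})$ on $\Gamma_{h,k}$, while $\bnu\cdot\bnu_h = 1+\calO(h^{2k})$, so $\mu_h-1 = \bnu\cdot\bnu_h\det(I-d\bH) - 1 = \calO(h^{k+1})$ for $k\ge 1$. The $(\mu_h-1)$-piece is therefore immediate after using the Piola-based equivalence $\|A_\bv\|_{L_2(\Gamma_{h,k})}+\|B_\bv\|_{L_2(\Gamma_{h,k})}\lesssim \|\bv\|_{H^1(\gamma)}$ and its analogue for $\bw$.

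The main obstacle is the two cross pieces, since the pointwise bound of Lemma~\ref{lem:DefCons} yields only $|A_\bv-B_\bv|\lesssim h^k\big(|\nab_{\Gamma_{h,k}}\ipt{\bv}|+|\ipt{\bv}|\big)$, which naively gives $h^k\|\bv\|_{H^1}\|\bw\|_{H^1}$. My plan to extract the missing factor of $h$ is to revisit the derivation of Lemma~\ref{lem:DefCons}, expanding the Piola formula~\eqref{eqn:invPiola} and the underlying chain-rule identity one order beyond what was needed there, so as to isolate the leading $\calO(h^k)$ portion of $A_\bv-B_\bv$ as a structured contraction $\mathbf{G}_h:\nab_\gamma\bv$ where $\mathbf{G}_h$ is a (piecewise) smooth tensor of magnitude $h^k$ built from $\bnu-\bnu_h$, $d$, and derivatives of $\bp$; the residual after extracting this leading term is $\calO(h^{k+1})$ pointwise times $(|\nab\bv|+|\bv|)$ and therefore contributes directly at the desired order. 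For the structured leading term, I would integrate by parts elementwise on $\calT_{h,k}$ to transfer the derivative off $\bv$ onto $\bw$ and the smooth geometric factor $\mathbf{G}_h$, producing a volume residual bounded by $h^{k+1}\|\bv\|_{H^2(\gamma)}\|\bw\|_{H^1(\gamma)}$ via the $\bH^2$ regularity of $\bv$. The arising edge contributions along $\partial K\in \calT_{h,k}$ are the most delicate point; these I would control using the weak-continuity tools of Lemmas~\ref{lem:JUMP}--\ref{lem:JumpDef} applied to a $\bV(K)$-type interpolant of $\ipt{\bv}$, together with Lemma~\ref{lem:NTS1} to absorb the perturbation between $\ipt{\bv}$ and that interpolant, and a scaled trace inequality to balance the edge weights. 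A symmetric treatment of $A_\bw-B_\bw$ and summation over the three algebraic pieces then yields $|G_h(\ipt{\bv},\ipt{\bw})|\lesssim h^{k+1}\|\bv\|_{H^2(\gamma)}\|\bw\|_{H^2(\gamma)}$.
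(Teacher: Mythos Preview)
Your decomposition into mass and deformation parts, the treatment of the mass term via~\eqref{l2geo}, and the handling of the $(\mu_h-1)$ piece are all correct and match the paper. The gap is in your mechanism for gaining the extra factor of $h$ on the cross terms.

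Your plan is to write the leading $\calO(h^k)$ portion of $A_\bv-B_\bv$ as $\mathbf{G}_h:\nabla_\gamma\bv$ and then integrate by parts to move the derivative off $\bv$ onto $\bw$ and $\mathbf{G}_h$. This does not gain a power of $h$: the geometric factor $\mathbf{G}_h$ is built from quantities like $\bPi\bnu_h$ and $\bPi_h\bnu$, which are $\calO(h^k)$, but their surface derivatives involve $\bH_h$, $\bH-\bH_h$, etc., which are $\calO(1)$ or even $\calO(h^{k-1})$, not smaller. So the volume term after your integration by parts is no better (and possibly worse) than the original $h^k$ bound. In addition, the edge machinery you propose to invoke (Lemmas~\ref{lem:JUMP}, \ref{lem:edgejumpest}, \ref{lem:JumpDef}, \ref{lem:NTS1}) is formulated for discrete functions in $\bV_h$ and does not apply to $\ipt{\bv}$ with $\bv\in\bH^2_T(\gamma)$.

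The paper's argument is different in a crucial way. After expanding $\nabla_\gamma\bv-(\nabla_{\Gamma_{h,k}}\ipt{\bv})^\ell$ via the chain-rule identity~\eqref{eqn:ChainRuleFun} into explicit pieces $J_1,\ldots,J_4$, the terms that are only $\calO(h^k)$ pointwise all have the specific structure of a smooth field dotted with $\bPi_h\bnu$ (or $\bPi\bnu_h$). The key observation is that $\bPi_h\bnu=\nabla_{\Gamma_{h,k}}d$ is a \emph{gradient} of the signed distance function, and $|d|\lesssim h^{k+1}$ on $\Gamma_{h,k}$. A global integration by parts on the closed surface $\Gamma_{h,k}$ (no edge terms, once one perturbs by $\calO(h^{k+1})$ to ensure the integrand is in $\bW^1_1$) then gives
\[
\int_{\Gamma_{h,k}}\bz\cdot(\bPi_h\bnu)=-\int_{\Gamma_{h,k}}d\,{\rm div}_{\Gamma_{h,k}}\bz\lesssim h^{k+1}\|\bz\|_{W^1_1(\Gamma_{h,k})},
\]
and the $\bH^2$ regularity of $\bv,\bw$ enters precisely to make $\bz$ (built from ${\rm Def}_\gamma\bw$, $\bH$, $\bv$, etc.) lie in $\bW^1_1$. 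This is the missing idea in your proposal: the extra $h$ comes not from shuffling derivatives between $\bv$ and $\bw$, but from recognizing the $\calO(h^k)$ geometric factor as the gradient of an $\calO(h^{k+1})$ scalar.
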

 
\subsection{Convergence Analysis}
We start the convergence analysis by introducing
the discretely divergence-free subspace of $\bV_h$:
\[
\bX_h = \{\bv\in \bV_h:\ b_h(\bv,q)=0\ \forall q\in Q_h\}.
\]
We also set
\[
{\bm F}_h = \big[{\bf I}-d {\bf H}]^{-1} \left[ {\bf I} - \frac{\bnu_h\otimes \bnu}{\bnu_h\cdot \bnu}\right] \bPi_h {\bm f}_h,
\]
so that (cf.~\cite{DemlowNeilan23})
\begin{equation}\label{eqn:CapF}
\int_{\Gamma_{h,k}} {\bm f}_h \cdot \bv = \int_{\Gamma_{h,k}} (\bPi_h {\bm f}_h) \cdot \bv = \int_{\gamma} {\bm F}^\ell_h \cdot \pt{\bv}\qquad \forall \bv\in \bL_2(\Gamma_{h,k}),\ \bv\cdot \bnu_h = 0.
\end{equation}
\begin{remark}[Choice of ${\bm f}_h$]
Using $[{\bf I}-d {\bf H}]^{-1} \bnu = \bnu$ and $\bnu \otimes \bnu \frac{\bnu_h \otimes \bnu}{\bnu\cdot \bnu_h} = \bnu \otimes \bnu$, we have
\begin{align*}
{\bm F}_h 
&= \left(\big[{\bf I}-d {\bf H}\big]^{-1} \left[ {\bf I} - \frac{\bPi \bnu_h\otimes  \bnu}{\bnu\cdot \bnu_h}\right] {-} \bnu\otimes \bnu\right)\bPi_h{\bm f}_h\\
&= \big[{\bf I}-d {\bf H}\big]^{-1} \left[ {\bf I} - \frac{\bPi \bnu_h\otimes  \bPi_h \bnu}{\bnu\cdot  \bnu_h}\right] {\bm f}_h - ((\bPi_h {\bm f}_h)\cdot \bnu)\bnu
\end{align*}
Consequently, using {$\pt{\bv} \cdot \bnu=0$,} $|\bPi \bnu_h|\lesssim h^k$, $|\bPi_h \bnu|\lesssim h^k$, and $|d|\lesssim h^{k+1}$, we conclude
\begin{align}\label{eqn:FsDiff}
\int_\gamma ({\bm f}-{\bm F}_h^\ell)\cdot \pt{\bv}\lesssim \left(\|{\bm f}-\bPi {\bm f}_h^\ell\|_{L_2(\gamma)} + h^{k+1} \|{\bm f}_h^\ell\|_{L_2(\gamma)}\right)\|\pt{\bv}\|_{L_2(\gamma)}\qquad \forall \bv\in \bV_h.
\end{align}
Choosing for example either ${\bm f}_h = {\bm f}^\ell$ or ${\bm f}_h = \ipt{\bm f}$, we obtain $\|{\bm f}-\bPi {\bm f}_h^\ell\|_{L_2(\gamma)} \lesssim h^{k+1} \|{\bm f}\|_{L_2(\gamma)}$.  Thus
\begin{align}
\label{fhest}
\int_\gamma ({\bm f}-{\bm F}_h^\ell)\cdot \pt{\bv} \lesssim h^{k+1} \|{\bm f}\|_{L_2(\Gamma)} \hbox{ and } \|{\bm f}_h\|_{L_2(\Gamma_{h,k})} \lesssim \|{\bm f}\|_{L_2(\gamma)}.
\end{align}
It is possible to obtain optimal-order convergence under more general assumptions on the choice of ${\bm f}_h$, but we assume \eqref{fhest} below in order to simplify our presentation.

\end{remark}

\begin{theorem}\label{thm:EnergyConv}
Let $(\bu_h,p)\in \bV_h \times Q_h$ satisfy the finite element
method \eqref{eqn:FEM} and assume the exact solution satisfies $(\bu,p)\in \bH^{r+1}(\gamma)\times H^{r}(\gamma)$.  Assume also that ${\bm f}_h$ is chosen so that \eqref{fhest} is satisfied. 
Then there holds
\begin{align}\label{eqn:EnergyError}
\|\bu-\pt{\bu}_h\|_{H^1_h(\gamma)}  +\|p-p^\ell\|_{L_2(\gamma)}  &\lesssim h^{r} (\|\bu\|_{H^{r+1}(\gamma)}+\|p\|_{H^r(\gamma)}) + h^k \|{\bm f}\|_{L_2(\gamma)}. 
\end{align}
\end{theorem}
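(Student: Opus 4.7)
The plan is to follow a second-Strang-lemma analysis, pulled back to $\gamma$ so as to exploit the continuous inf-sup condition and the weak-continuity estimates from Sections~\ref{sec-3}--\ref{sec-Consistency}. Let $\bu_I = \bI_h\ipt{\bu}\in \bV_h$ (from Lemma~\ref{lem:ApproxProps} with $s=r+1$) and let $p_I\in Q_h$ be a standard isoparametric Lagrange interpolant of $p$, so that
\[
\|\bu-\pt{\bu}_I\|_{H^1_h(\gamma)}\lesssim h^r\|\bu\|_{H^{r+1}(\gamma)}+h^{2k-1}\|\bu\|_{H^2(\gamma)},\qquad\|p-p_I^\ell\|_{L_2(\gamma)}\lesssim h^r\|p\|_{H^r(\gamma)}.
\]
Set $\be := \bu_h-\bu_I\in\bV_h$ and $\epsilon := p_h-p_I\in Q_h$. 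The inf-sup condition~\eqref{eqn:THInfSup}, discrete Korn~\eqref{korn:discsurf}, and the standard Brezzi saddle-point framework combine to yield
\[
\|\be\|_{H^1_h(\Gamma_{h,k})}+\|\epsilon\|_{L_2(\Gamma_{h,k})}\lesssim \sup_{(\bv,q)\in \bV_h\times Q_h}\frac{|a_h(\be,\bv)+b_h(\bv,\epsilon)+b_h(\be,q)|}{\|\bv\|_{H^1_h(\Gamma_{h,k})}+\|q\|_{L_2(\Gamma_{h,k})}},
\]
so the analysis reduces to bounding the consistency residuals in the numerator.

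These residuals are pulled back to $\gamma$ using \eqref{eqn:FEM}, the identity~\eqref{eqn:CapF}, Lemma~\ref{lem:consist1}, and the Piola identity $b_h(\bv,q)=b(\pt{\bv},q^\ell)$ (which follows from $\mu_h\,\div_{\Gamma_{h,k}}\bv = (\div_\gamma \pt{\bv})\circ\bp$ together with $\mu_h\,d\sigma_{\Gamma_{h,k}}=d\sigma_\gamma$, so no geometric error appears in $b_h$). Since $\pt{\bv}$ is only piecewise $H^1$ on $\gamma$, element-by-element integration by parts in the continuous weak formulation~\eqref{eqn:StokesVariational} against $\pt{\bv}$ produces
\[
a(\bu,\pt{\bv})+b(\pt{\bv},p)=\int_\gamma\bm f\cdot\pt{\bv}+\sum_{e\in\calE_{h,k}}\int_{e^\gamma}({\rm Def}_\gamma \bu):\jump{\pt{\bv}},
\]
with no pressure-jump contribution because $\bV_h\subset\bH(\div_{\Gamma_{h,k}};\Gamma_{h,k})$ and the Piola transform preserves conormal components. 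Subtracting from the pulled-back discrete equation yields the identities
\[
\begin{aligned}
a_h(\be,\bv)+b_h(\bv,\epsilon) = & \,a(\bu-\pt{\bu}_I,\pt{\bv})+b(\pt{\bv},p-p_I^\ell)+\int_\gamma (\bm F_h^\ell-\bm f)\cdot\pt{\bv}\\
& -\sum_{e\in\calE_{h,k}}\int_{e^\gamma}({\rm Def}_\gamma\bu):\jump{\pt{\bv}}+G_h(\bu_I,\bv),\\
b_h(\be,q) = & \,-b(\pt{\bu}_I-\bu,q^\ell).
\end{aligned}
\]

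Each term is then bounded by the machinery already in place: the first two terms by the approximation estimates above; the forcing term by~\eqref{fhest}; the divergence residual by $\|\div_\gamma(\bu-\pt{\bu}_I)\|_{L_2(\gamma)}\lesssim \|\bu-\pt{\bu}_I\|_{H^1_h(\gamma)}$; the geometric term via Lemma~\ref{lem:consist1} combined with the approximation stability $\|\pt{\bu}_I\|_{H^1_h(\gamma)}\lesssim \|\bu\|_{H^1(\gamma)}$; and the edge-jump term by Lemma~\ref{lem:JumpDef} (equation~\eqref{eqn:JumpDef2}) applied with $\bw=\bu$, $s=1$, and $t=r-1$, which produces exactly $h^r\|\bu\|_{H^r(\gamma)}\|\pt{\bv}\|_{H^1_h(\gamma)}+h^{2k-1}\|\bu\|_{H^2(\gamma)}\|\pt{\bv}\|_{L_2(\gamma)}$. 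Elliptic $H^2$-regularity $\|\bu\|_{H^2(\gamma)}\lesssim\|\bm f\|_{L_2(\gamma)}$ absorbs all $h^{2k-1}\le h^k$ remainders into the $h^k\|\bm f\|_{L_2(\gamma)}$ contribution, while the norm equivalence $\|\pt{\bv}\|_{H^1_h(\gamma)}\approx\|\bv\|_{H^1_h(\Gamma_{h,k})}$ and a final triangle inequality against the interpolation bounds deliver~\eqref{eqn:EnergyError}.

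The main obstacle is the edge-jump nonconformity term, whose optimal-order control depends critically on the Gauss-Lobatto placement of edge DOFs through Lemma~\ref{lem:JumpDef}; without this placement, the nonconformity error would only converge suboptimally, in close analogy with the Crouzeix-Raviart element's requirement that its degrees of freedom sit at edge midpoints. A secondary bookkeeping challenge is organizing the several distinct geometric-inconsistency remainders (arising in $G_h$, in the $h^{2k-1}$ tail of the interpolation estimate, and in $\bm F_h^\ell-\bm f$) so they all fit under the advertised $h^k\|\bm f\|_{L_2(\gamma)}$ term via elliptic regularity.
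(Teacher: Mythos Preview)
Your proposal is correct and uses essentially the same ingredients as the paper: elementwise integration by parts producing the edge-jump term, Lemma~\ref{lem:JumpDef} with $s=1$, $t=r-1$ for the nonconformity, Lemma~\ref{lem:consist1} for the geometric error, the exact Piola identity $b_h(\bv,q)=b(\pt{\bv},q^\ell)$, and \eqref{fhest} for the data term. The only organizational difference is that the paper first restricts to the discretely divergence-free subspace $\bX_h$ and applies Strang's lemma to obtain the velocity error, then recovers the pressure error separately via the discrete inf-sup condition, whereas you invoke the full Brezzi saddle-point stability on $\bV_h\times Q_h$ at the outset and bound velocity and pressure residuals simultaneously; both routes are standard and lead to the same estimate. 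One small correction: the stability $\|\pt{\bu}_I\|_{H^1_h(\gamma)}\lesssim\|\bu\|_{H^1(\gamma)}$ you claim actually needs $\|\bu\|_{H^2(\gamma)}$ on the right (via Lemma~\ref{lem:ApproxProps} with $s=2$), but this is harmless since $\|\bu\|_{H^2(\gamma)}\lesssim\|\bm f\|_{L_2(\gamma)}$ by regularity, exactly as you invoke elsewhere.
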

\begin{proof}
Given $\bv \in \bX_h$, we add and subtract terms and use \eqref{eqn:GhEstimate} to obtain
\begin{equation}
\label{eqn:ConstStart}
\begin{split}
a(\bu-\pt{\bu}_h,\pt{\bv}) 
&= \Big[a(\bu,\pt{\bv}) - \int_\gamma {\bm f}\cdot \pt{\bv}\Big] + \int_\gamma ({\bm f}-{\bm F}^\ell_h)\cdot \pt{\bv} + G_h(\bu_h,\bv)\\
&\lesssim \Big[a(\bu,\pt{\bv}) - \int_\gamma {\bm f}\cdot \pt{\bv}\Big] +\big(\|{\bm f}-{\bm F}^\ell_h\|_{L_2(\gamma)} +h^k \|\pt{\bu}_h\|_{H^1_h(\gamma)}\big)\|\pt{\bv}\|_{H^1_h(\gamma)}.
%
\end{split}
\end{equation}
%
%

Using integration by parts, we find
\begin{equation}\label{eqn:aIBP}
\begin{split}
a(\bu,\pt{\bv}) 
&= \sum_{K\in \calT_{h,k}} \int_{K^\gamma} {\rm Def}_{\gamma} \bu:{\rm Def}_\gamma \pt{\bv} + \int_\gamma \bu\cdot \pt{\bv}\\
&= -\int_{\gamma} {\rm div}_\gamma ({\rm Def}_{\gamma} \bu ) \cdot \pt{\bv}
+  \sum_{K\in \calT_{h,k}} \int_{\p K^\gamma} ({\rm Def}_{\gamma} \bu) \bn_{\p K^\gamma} \cdot {\pt{\bv}} + 
\int_\gamma \bu\cdot \pt{\bv}\\
%
%
& = \int_{\gamma} {\bm f}\cdot \pt{\bv} - b(\pt{\bv},p)
+  \sum_{e\in \calE_{h,k}} \int_{e^\gamma} ({\rm Def}_{\gamma} \bu) : \jump{\pt{\bv}}.
\end{split}
\end{equation}

Therefore by the divergence-preserving properties of the Piola transform {and since $\bv$ is discretely divergence-free},
{
\begin{align*}
a(\bu,\pt{\bv}) & = \int_{\gamma} {\bm f}\cdot \pt{\bv} - b(\pt{\bv},p-q^\ell)
+  \sum_{e\in \calE_{h,k}} \int_{e^\gamma} ({\rm Def}_{\gamma} \bu) : \jump{\pt{\bv}}\quad \forall \bv\in \bX_h,\ \forall q\in Q_h.
\end{align*}
}
We then find using Lemma \ref{lem:JumpDef} with $t=r-1$ and $s=1$ that 
\begin{align*}
a(\bu,\pt{\bv}) - \int_\gamma {\bm f}\cdot \pt{\bv}
&\lesssim \|p-q^\ell\|_{L_2(\gamma)} \|\pt\bv\|_{H^1_h(\gamma)} + h^{2k-1} \|\bu\|_{H^2(\gamma)} \|\pt{\bv}\|_{L_2(\gamma)} + h^{r} \|\bu\|_{H^{r}(\gamma)} \|\pt \bv\|_{H^1_h(\gamma)}\\
&\lesssim \big(\|p-q^\ell\|_{L_2(\gamma)}+ h^{2k-1} \|\bu\|_{H^2(\gamma)}  + h^{r} \|\bu\|_{H^{r}(\gamma)}\big) \|\pt\bv\|_{H^1_h(\gamma)} .
\end{align*}
We apply this estimate to \eqref{eqn:ConstStart} along with   
$\|\pt{\bu}_h\|_{H^1_h(\gamma)}\lesssim \|\bu_h\|_{H^1_h(\Gamma_{h,k})}\lesssim \|{\bm f}_h\|_{L_2(\Gamma_{h,k})}$, $\|\bu\|_{H^2(\gamma)}\lesssim \|{\bm f}\|_{L_2(\gamma)}$,  \eqref{fhest}, and $k \le 2k-1$
to conclude
\begin{align*}
a(\bu-\pt{\bu}_h,\pt{\bv}) 
&\lesssim 
\Big(\|p-q^\ell\|_{L_2(\gamma)}  + h^{r} \|\bu\|_{H^{r}(\gamma)}+ h^k \|{\bm f}\|_{L_2(\gamma)} \Big) \|\pt\bv\|_{H_h^1(\gamma)}
\end{align*}
for all $q\in Q_h$ and $\bv\in \bX_h$. We then take $q\in Q_h$ to satisfy $\|p-q^\ell\|_{L_2(\gamma)}\lesssim h^{r} \|p\|_{H^{r}(\gamma)}$, apply
Strang's lemma, and use the approximation properties of the finite element space $\bV_h$ (cf.~Lemma \ref{lem:ApproxProps})
 to obtain
\begin{align}\label{eqn:velocityH1Error}
\|\bu-\pt{\bu}_h\|_{H^1_h(\gamma)}\lesssim h^{r} (\|\bu\|_{H^{r+1}(\gamma)}+\|p\|_{H^{r}(\gamma)})
+ h^k \|{\bm f}\|_{L_2(\gamma)}\rev{.} 
\end{align}

Next, we use \eqref{eqn:FEM}, \eqref{eqn:CapF} and \eqref{eqn:GhDef},
along with the divergence-preserving properties of the Piola transform\rev{,} to write
\begin{align*}
b_h(\bv,p_h - q)
& = \int_\gamma {\bm F}_h^\ell \cdot \pt{\bv} - a(\pt{\bu}_h,\pt{\bv}) -b(\pt{\bv},q^\ell) + G_h(\bu_h,\bv)\\
& = 
\left[ \int_\gamma {\bm f}\cdot \pt{\bv} - a(\bu,\pt{\bv})-b(\pt{\bv},p)\right] 
+\int_\gamma ({\bm F}_h^\ell-{\bm f})\cdot \pt{\bv} +a(\bu-\pt{\bu}_h,\pt{\bv}) -b(\pt{\bv},q^\ell-p)+G_h(\bu_h,\bv)
\end{align*}
for all $\bv\in \bV_h$ and $q\in Q_h$. 

We let $q\in Q_h$ satisfy $\|p-q^\ell\|_{L_2(\gamma)}\lesssim h^{r}\|p\|_{H^{r}(\gamma)}$,
and apply 
 \eqref{eqn:GhEstimate} and \eqref{fhest} to obtain
\begin{align*}
&b_h(\bv,p_h - q)\lesssim {\left[ \int_\gamma {\bm f}\cdot \pt{\bv} - a(\bu,\pt{\bv})-b(\pt{\bv},p)\right]}\\
&\qquad+ \left(h^{k+1} \|{\bm f}\|_{L_2(\gamma)}
+\|\bu-\pt{\bu}_h\|_{H^1_h(\gamma)} + h^{r}\|p\|_{H^{r}(\gamma)}+h^{k}\|{\bm f}_h\|_{L_2(\Gamma_{h,k})}\right)\|\pt{\bv}\|_{H^1_h(\gamma)}.
\end{align*}
Using \eqref{eqn:aIBP} and Lemma \ref{lem:JumpDef}, we have
\begin{align*}
{ \int_\gamma {\bm f}\cdot \pt{\bv} - a(\bu,\pt{\bv})-b(\pt{\bv},p) }
&= 
 -\sum_{e\in \calE_{h,k}} \int_{e^\gamma} ({\rm Def}_\gamma \bu): \jump{\pt{\bv}}\\
 &\lesssim h^{r} \|\bu\|_{H^{r}(\gamma)} \|\pt{\bv}\|_{H_h^1(\gamma)}+h^{2k-1}\|\bu\|_{H^2(\gamma)}\|\pt{\bv}\|_{L_2(\gamma)}.
 \end{align*}
{Thus by \eqref{eqn:velocityH1Error}, \eqref{fhest}, and $\|\bu\|_{H^2(\gamma)}\lesssim \|{\bm f}\|_{L_2(\gamma)}$,
\begin{align*}
b_h(\bv,p_h -q)  \lesssim   \left ( h^{r} (\|\bu\|_{H^{r+1}(\gamma)}+\|p\|_{H^{r}(\gamma)})
+ h^k\|{\bm f}\|_{L_2(\gamma)} \right ) \|\pt{\bv}\|_{H_h^1(\gamma)}.
\end{align*}
} 

Applying a triangle inequality, norm equivalence, and the discrete inf-sup condition \eqref{eqn:THKMInfSup} then yields for properly chosen 
$\bv \in \bV_h$ with $\|\bv\|_{H_h^1(\Gamma_{h,k})}=1$ that
$$
\begin{aligned}
\|p-p_h^\ell\|_{L_2(\gamma)} & \lesssim \|p-q^\ell\|_{L_2(\gamma)} + b_h(\bv, p_h-q)
\\ & \lesssim h^{r} (\|\bu\|_{H^{r+1}(\gamma)}+\|p\|_{H^{r}(\gamma)}) + h^{k} \|{\bm f}\|_{L_2(\gamma)}. 
\end{aligned} $$
This estimate combined with \eqref{eqn:velocityH1Error}  yields the desired result \eqref{eqn:EnergyError}.
\end{proof}

{\begin{theorem}\label{thm:L2Estimate}
Suppose the exact solution to the Stokes problem 
satisfies $(\bu,p)\in \bH^{r+1}(\gamma)\times H^r(\gamma)$, and that ${\bm f}_h$ is chosen so that \eqref{fhest} holds.  Then 
\begin{align*}
\|\bu-\pt{\bu}_h\|_{L_2(\gamma)}&\lesssim  h^{r+1}(\|\bu\|_{H^{r+1}(\gamma)}+\|p\|_{H^r(\gamma)})+(h^{2k-1}+h^{k+1})\|{\bm f}\|_{L_2(\gamma)}.
\end{align*}
\end{theorem}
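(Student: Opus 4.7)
The plan is to use a standard Aubin--Nitsche duality argument, leveraging the improved consistency estimates of Sections \ref{sec-3} and \ref{sec-Consistency}. Set $\be = \bu - \pt{\bu}_h$ and consider the dual surface Stokes problem: find $(\bz,\bw) \in \bH^1_T(\gamma)\times \mathring{L}_2(\gamma)$ such that
\[
a(\bv,\bz) + b(\bv,\bw) = \int_\gamma \be\cdot \bv \quad \forall \bv\in \bH^1_T(\gamma), \qquad b(\bz,q) = 0 \quad \forall q\in \mathring{L}_2(\gamma).
\]
By the Stokes regularity results cited near \eqref{eqn:StokesVariational}, $\|\bz\|_{H^2(\gamma)}+\|\bw\|_{H^1(\gamma)}\lesssim \|\be\|_{L_2(\gamma)}$. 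Testing with $\bv = \be$ yields $\|\be\|_{L_2(\gamma)}^2 = a(\be,\bz) + b(\be,\bw)$, so the proof reduces to estimating these two pairings.

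For $b(\be,\bw)$, I would insert an approximation $q\in Q_h$ with $\|\bw-q^\ell\|_{L_2(\gamma)}\lesssim h\|\bw\|_{H^1(\gamma)}$; since $\bu$ is exactly divergence-free and $b(\pt{\bu}_h,q^\ell) = b_h(\bu_h,q) = 0$, this reduces to $b(\be,\bw-q^\ell)$, controlled by Cauchy--Schwarz together with Theorem \ref{thm:EnergyConv}. For $a(\be,\bz)$, I would split as $a(\be,\bz) = a(\be, \bz-\pt{\bI_h\ipt{\bz}}) + a(\be,\pt{\bI_h\ipt{\bz}})$. The first summand is handled by Cauchy--Schwarz, Lemma \ref{lem:ApproxProps} with $s = 2$, and Theorem \ref{thm:EnergyConv}. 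For the second, I would insert $\bI_h \ipt{\bz}$ into both \eqref{eqn:FEM} and the piecewise integration-by-parts version of \eqref{eqn:StokesVariational}, using Piola divergence-preservation, \eqref{eqn:CapF}, and the definition \eqref{eqn:GhDef} of $G_h$, to obtain
\begin{align*}
 a(\be, \pt{\bI_h\ipt{\bz}}) = &\int_\gamma (\bm f - \bm F_h^\ell)\cdot \pt{\bI_h\ipt{\bz}} + \sum_{e\in\calE_{h,k}} \int_{e^\gamma} ({\rm Def}_\gamma \bu) : \jump{\pt{\bI_h\ipt{\bz}}} \\
  & - b(\pt{\bI_h\ipt{\bz}}, p - p_h^\ell) - G_h(\bu_h, \bI_h\ipt{\bz}).
\end{align*}

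Each of the four pieces is then treated separately: the forcing mismatch by \eqref{fhest}; the pressure term by first subtracting $\bz$ (which annihilates $b$), then using approximation together with Theorem \ref{thm:EnergyConv}; and the consistency error by decomposing $G_h(\bu_h,\bI_h\ipt{\bz}) = G_h(\bu_h - \ipt{\bu},\bI_h\ipt{\bz}) + G_h(\ipt{\bu}, \bI_h\ipt{\bz} - \ipt{\bz}) + G_h(\ipt{\bu}, \ipt{\bz})$, applying the generic bound \eqref{eqn:GhEstimate} to the first two (whose norms carry an extra $h^r$ or $h$ factor from Theorem \ref{thm:EnergyConv} and Lemma \ref{lem:ApproxProps}) and the improved Lemma \ref{lem:GhIBound} to the last. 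The main obstacle is the jump term $\sum_e\int_{e^\gamma}({\rm Def}_\gamma \bu):\jump{\pt{\bI_h\ipt{\bz}}}$: the parameter choice used for the energy estimate ($s=1,\,t=r-1$ in Lemma \ref{lem:JumpDef}) delivers only $O(h^r)$, which is one power short. The key observation is that since $\bI_h\ipt{\bz}$ is elementwise polynomial of degree $r\ge 2$, an inverse estimate combined with the $H^1$ approximation bound of Lemma \ref{lem:ApproxProps} yields $\|\pt{\bI_h\ipt{\bz}}\|_{H^2_h(\gamma)}\lesssim \|\bz\|_{H^2(\gamma)}$. Applying \eqref{eqn:JumpDef2} with $s=2,\,t=r-1$ then produces the desired $O(h^{r+1})$ regularity-based contribution along with an unavoidable $h^{2k-1}\|\bu\|_{H^2(\gamma)}\|\bz\|_{H^2(\gamma)}$ term; via $\|\bu\|_{H^2(\gamma)}\lesssim \|\bm f\|_{L_2(\gamma)}$, this accounts for exactly the $h^{2k-1}\|\bm f\|_{L_2(\gamma)}$ summand in the statement, while all remaining geometric contributions from the $G_h$ and $\bm f_h$ terms collapse to $h^{k+1}\|\bm f\|_{L_2(\gamma)}$. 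Collecting the bounds and dividing by $\|\be\|_{L_2(\gamma)}$ gives the claim.
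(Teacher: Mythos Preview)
Your proposal contains a genuine gap at the very first step.  You write that ``testing with $\bv=\be$ yields $\|\be\|_{L_2(\gamma)}^2=a(\be,\bz)+b(\be,\bw)$,'' but the dual variational identity is stated only for $\bv\in\bH^1_T(\gamma)$, and $\be=\bu-\pt{\bu}_h$ is \emph{not} conforming because $\pt{\bu}_h$ is merely piecewise $H^1$ on $\gamma$.  If you pass to the strong form of the dual and integrate by parts elementwise, the divergence term causes no trouble (since $\pt{\bu}_h\in\bH({\rm div}_\gamma;\gamma)$), but the deformation term produces the additional contribution
\[
\sum_{e\in\calE_{h,k}}\int_{e^\gamma}{\rm Def}_\gamma\bz:\jump{\pt{\bu}_h},
\]
which your outline omits entirely.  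This is the ``dual'' counterpart of the primal jump you do treat, and it cannot be bounded with the same parameters: only $\|\bz\|_{H^2(\gamma)}$ is available, so one must take $t=1$ in \eqref{eqn:JumpDef2}, and then $s=r$ is needed to reach order $r+1$.  This in turn forces you to control $\|\pt{\bu}_h\|_{H^r_h(\gamma)}$, which requires an inverse estimate combined with Theorem~\ref{thm:EnergyConv} and Lemma~\ref{lem:ApproxProps}.  The paper carries out exactly this step in its treatment of Term~$II$; see \eqref{eqn:IIStart} and the bound on $\|\pt{\bu}_h\|_{H^r_h(\gamma)}$ that follows.

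Aside from this omission your argument is close in spirit to the paper's.  The main structural difference is that the paper introduces the \emph{discrete} dual solution $(\bvarphi_h,s_h)$ and writes $\|\be\|^2=a(\bu,\bvarphi)-a_h(\bu_h,\bvarphi_h)$ by testing the continuous dual with $\bu$ and the discrete dual with $\bu_h$; this makes the dual-side nonconformity appear automatically (as Term~$II$) rather than being hidden in an illegal test function, and it also supplies the bound \eqref{varphihER} on $\|\pt{\bvarphi}_h\|_{H^2_h(\gamma)}$ used when invoking Lemma~\ref{lem:JumpDef} with $s=2$.  Your use of $\bI_h\ipt{\bz}$ in place of $\bvarphi_h$ is a reasonable alternative and your inverse-estimate argument for $\|\pt{\bI_h\ipt{\bz}}\|_{H^2_h(\gamma)}\lesssim\|\bz\|_{H^2(\gamma)}$ plays the same role, but you still need to account for the second jump term above to close the proof.
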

}
\begin{proof}
Let $(\bvarphi,s)\in \bH^1_T(\gamma)\times \mathring{L}_2(\gamma)$ satisfy
\begin{equation}\label{eqn:DualProblem}
\begin{aligned}
a(\bv,\bvarphi)+b(\bv,s) & = \int_\gamma (\bu-\pt \bu_h)\cdot \bv\qquad &&\forall \bv\in \bH^1_T(\gamma),\\
b(\bvarphi,q) & = 0\qquad &&\forall q\in \mathring{L}_2(\gamma),
\end{aligned}
\end{equation}
and let $(\bvarphi_h,s_h)\in \bV_h\times Q_h$ be the corresponding finite element solution:
\begin{equation}\label{eqn:DiscreteDualProblem}
\begin{aligned}
a_h(\bv,\bvarphi_h)+b_h(\bv,s_h) & = \int_{\gamma} (\bu- \pt\bu_h)\cdot \pt\bv\qquad &&\forall \bv\in \bV_h,\\
b_h(\bvarphi_h,q) & = 0\qquad &&\forall q\in Q_h.
\end{aligned}
\end{equation}
Theorem \ref{thm:EnergyConv} and the elliptic regularity 
estimate $\|\bvarphi\|_{H^2(\gamma)}+\|s\|_{H^1(\gamma)}\lesssim \|\bu-\pt{\bu}_h\|_{L_2(\gamma)}$
yield
\begin{align}\label{eqn:DualErrorEstimate}
\|\bvarphi-\pt\bvarphi_h\|_{H^1_h(\gamma)}+ \|s-s^\ell_h\|_{L^2(\gamma)}\lesssim h \|\bu-\pt{\bu}_h\|_{L^2(\gamma)}.
\end{align}
This estimate, along with inverse estimates, approximation properties of $\bV_h$,
  and elliptic regularity, yields
\begin{equation}\label{varphihER}
\|\pt \bvarphi_h\|_{H^2_h(\gamma)} 
 \lesssim \|\bu-\pt{\bu}_h\|_{L_2(\gamma)}.
\end{equation}


%
We take $\bv = \bu$ in \eqref{eqn:DualProblem} and $\bv = \bu_h$ 
in \eqref{eqn:DiscreteDualProblem} and subtract to obtain
\begin{equation}
\label{eqn:DualityStart}
\begin{split}
\|\bu-\pt{\bu}_h\|_{L_2(\gamma)}^2 
&= a(\bu,\bvarphi) - {a_h({\bu}_h,\bvarphi_h)}\\
& = \underbrace{a(\bu-\pt{\bu}_h,\bvarphi)}_{:=I} +\underbrace{\big[a(\pt\bu_h,\bvarphi) - a_h(\bu_h,\bvarphi_h)\big]}_{=:II}.
\end{split}
\end{equation}

To estimate $I$, we  add and subtract terms and apply 
\eqref{eqn:DualErrorEstimate}:
\begin{equation}
\label{eqn:ITermStart}
\begin{split}
I = a(\bu-\pt{\bu}_h,\bvarphi) 
& = a(\bu-\pt{\bu}_h,\bvarphi-\pt{\bvarphi}_h)+a(\bu-\pt{\bu}_h,\pt{\bvarphi}_{h})\\ 
&\lesssim  h\|\bu-\pt{\bu}_h\|_{H^1_h(\gamma)}\|\bu-\pt{\bu}_h\|_{L_2(\gamma)} + a(\bu-\pt{\bu}_h,\pt{\bvarphi}_h).
\end{split}
\end{equation}

We then write using \eqref{eqn:GhDef} and \eqref{eqn:CapF} and the fact $\bvarphi_h$ is discretely divergence-free
\begin{align*}
a(\bu-\pt{\bu}_h,\pt\bvarphi_{h})  
&= a(\bu,\pt{\bvarphi}_{h}) - \int_\gamma {\bm f}\cdot \pt{\bvarphi}_{h} + \int_\gamma {\bm f}\cdot \pt{\bvarphi}_{h} - a(\pt{\bu}_h,\pt{\bvarphi}_{h})\\
&= a(\bu,\pt{\bvarphi}_{h}) - \int_\gamma {\bm f}\cdot \pt{\bvarphi}_{h} + \int_\gamma {\bm f}\cdot \pt{\bvarphi}_{h} - a_h({\bu}_h,{\bvarphi}_{h})
{-} G_h(\bu_h,\bvarphi_{h})\\
%
%
&= a(\bu,\pt{\bvarphi}_{h}) - \int_\gamma {\bm f}\cdot \pt{\bvarphi}_{h} + \int_\gamma ({\bm f}-{\bm F}_h^\ell)\cdot \pt{\bvarphi}_{h}
{-} G_h(\bu_h,\bvarphi_{h}).
\end{align*}
%

 To bound the geometric consistency error, we have by {Lemma \ref{lem:consist1}}, Lemma \ref{lem:GhIBound}, and \eqref{eqn:DualErrorEstimate},
\begin{equation}
\label{eqn:GeoInProof}
\begin{split}
G(\bu_h,\bvarphi_{h}) 
&= G(\bu,\bvarphi)+G(\bu,\bvarphi_{h}-\bvarphi)+G(\bu_h-\bu,\bvarphi_{h})\\
&\lesssim G(\bu,\bvarphi)+h^k\|\bu\|_{H^1(\gamma)} \|\pt\bvarphi_{h}-\bvarphi\|_{H^1(\gamma)}+h^k \|\bu-\pt{\bu}_h\|_{H^1(\gamma)}\|\pt{\bvarphi}_{h}\|_{H^1(\gamma)}\\
&\lesssim (h^{k+1}\|\bu\|_{H^2(\gamma)} +h^{k} \|\bu-\pt{\bu}_h\|_{H^1(\gamma)}) \|\bu-\pt{\bu}_h\|_{L_2(\gamma)}.
\end{split}
\end{equation}

Applying \eqref{fhest} and elliptic regularity we therefore obtain
\begin{equation}
\label{eqn:PartOfI}
\begin{split}
a(\bu-\pt{\bu}_h,\pt\bvarphi_{h})  
&\lesssim \left[a(\bu,\pt\bvarphi_{h}) - \int_\gamma {\bm f}\cdot \pt\bvarphi_{h}\right] \\
&\qquad
+ \left( 
 h^{k+1}\|{\bm f}\|_{L_2(\gamma)}
+h^k \|\bu-\pt{\bu}_h\|_{H^1_h(\gamma)}\right) \|\bu-\pt{\bu}_h\|_{L_2(\gamma)}.
\end{split}
\end{equation}

An application of Lemma \ref{lem:JumpDef} with $t=r-1$ and $s=2$ along with \eqref{varphihER}  yields
\begin{align*}
a(\bu,\pt{\bvarphi}_{h}) - \int_\gamma {\bm f}\cdot \pt{\bvarphi}_{h} 
&= \sum_{e\in \calE_{h,k}} \int_{e^\gamma} {{\rm Def}_\gamma} \bu: \jump{\pt{\bvarphi}_{h}} {-b(\pt{\bvarphi}_{h},p)}
\\ & \lesssim h^{r+1} \|\bu\|_{H^r(\gamma)}\|\pt{\bvarphi}_{h}\|_{H^2_h(\gamma)}+h^{2k-1}\|\bu\|_{H^2(\gamma)}\|\pt{\bvarphi}\|_{L_2(\gamma)}{-b(\pt{\bvarphi}_{h},p)}
\\ & \lesssim (h^{r+1}\|\bu\|_{H^r(\gamma)} +h^{2k-1}\|{\bm f}\|_{L_2(\gamma)})\|\bu-\pt{\bu}_h\|_{L_2(\gamma)}{-b(\pt{\bvarphi}_{h},p)}.
\end{align*}

Now since $\bvarphi_h\in \bH({\rm div}_{\Gamma_h};\Gamma_h)$ is discretely divergence-free and 
$\bvarphi$ is exactly divergence-free,
we have by the divergence-preserving properties of the Piola transform,
\begin{align*}
-b(\pt\bvarphi_h,p) = b(\bvarphi-\pt{\bvarphi}_h,p) \lesssim \|\bvarphi-\pt{\bvarphi}_h\|_{H^1_h(\gamma)}\inf_{q\in Q_h} \|p-q^\ell\|_{L_2(\gamma)}\lesssim h^{r+1}\|p\|_{H^r(\gamma)}\|\bu-\pt{\bu}_h\|_{L_2(\gamma)},
\end{align*}
where we used the approximation properties of $Q_h$ and \eqref{eqn:DualErrorEstimate} in the last step.

%
Thus, we have
\begin{align*}
a(\bu,\pt{\bvarphi}_{h}) - \int_\gamma {\bm f}\cdot \pt{\bvarphi}_{h} \lesssim \left(h^{r+1}(\|\bu\|_{H^r(\gamma)}+\|p\|_{H^{r}(\gamma)})+h^{2k-1}\|{\bm f}\|_{L_2(\gamma)}\right)\|\bu-\pt{\bu}_h\|_{L_2(\gamma)},
\end{align*}
and so by inserting this estimate into \eqref{eqn:PartOfI},  and by using $\|\bu\|_{H^2(\gamma)}+\|p\|_{H^1(\gamma)}\lesssim \|{\bm f}\|_{L_2(\gamma)}$, we have
\begin{align*}
a(\bu-\pt{\bu}_h,\pt{\bvarphi}_{h})
&\lesssim \left( h^{r+1}(\|\bu\|_{H^r(\gamma)} + \|p\|_{H^r(\gamma)})  \right. 
\\
&\qquad\left.+(h^{k+1}+h^{2k-1})\|{\bm f}\|_{L_2(\gamma)}  +h^{k}\|\bu-\pt{\bu}_h\|_{H^1_h(\gamma)}\right) \|\bu-\pt{\bu}_h\|_{L_2(\gamma)}.
\end{align*}
We insert this estimate into \eqref{eqn:ITermStart} and use Theorem \ref{thm:EnergyConv}, thus obtaining
\begin{equation}\label{eqn:IBound}
\begin{split}
I
&\lesssim \left(h \|\bu-\pt{\bu}_h\|_{H^1_h(\gamma)}+ h^{r+1}(\|\bu\|_{H^r(\gamma)} + \|p\|_{H^r(\gamma)}) 
+(h^{k+1}+h^{2k-1})\|{\bm f}\|_{L_2(\gamma)}\right) \|\bu-\pt{\bu}_h\|_{L_2(\gamma)}\\
%
&\lesssim  \left(h^{r+1}(\|\bu\|_{H^{r+1}(\gamma)} + {\|p\|_{H^r(\gamma)}})
+(h^{k+1}+h^{2k-1})\|{\bm f}\|_{L_2(\gamma)}
\right) \|\bu-\pt{\bu}_h\|_{L_2(\gamma)}.
\end{split}
\end{equation}


To bound $II$, we use integration by parts analogous to \eqref{eqn:aIBP} and write
\begin{equation*}
\begin{split}
II  
& = \left[a(\pt\bu_h,\bvarphi) -\int_\gamma \pt\bu_h \cdot  (\bu-\pt{\bu}_h)\right] + \left[\int_\gamma \pt\bu_h \cdot  (\bu-\pt{\bu}_h) -a_h(\bu_h,\bvarphi_h)\right]\\
%
& = \sum_{e\in \calE_{h,k}} \int_{e^\gamma} {\rm Def}_\gamma \bvarphi: \jump{\pt\bu_h} - b(\pt\bu_h,s)+b_h(\bu_h,s_h)\\ 
& = \sum_{e\in \calE_{h,k}} \int_{e^\gamma} {\rm Def}_\gamma \bvarphi: \jump{\pt\bu_h} +b(\bu-\pt\bu_h,s-s_h^\ell). 
\end{split}
\end{equation*}
We then use \eqref{eqn:GeoInProof}, \eqref{eqn:DualErrorEstimate}, and Lemma \ref{lem:JumpDef} with $t=1$ and $s= r$:
\begin{equation}
\label{eqn:IIStart}
\begin{split}
II
&\lesssim  (h^{r+1}\|\pt{\bu}_{h}\|_{H^r_h(\gamma)} + h^{2k-1} \|\pt{\bu}_h\|_{L_2(\gamma)}) \|\bvarphi\|_{H^2(\gamma)}+ \|\bu-\pt{\bu}_h\|_{H^1_h(\gamma)}\|s-s_h^\ell\|_{L_2(\gamma)}\\
%
&\lesssim \left(h^{r+1}\|\pt{\bu}_{h}\|_{H^r_h(\gamma)} +h^{2k-1} \|\pt{\bu}_h\|_{L_2(\gamma)} + h \|\bu-\pt{\bu}_h\|_{H^1_h(\gamma)}\right) \|\bu-\pt{\bu}_h\|_{L_2(\gamma)}
\end{split}
\end{equation}
Using Lemma \ref{lem:ApproxProps}, we let  $\bv\in \bV_h$ satisfy $\|\bu-\pt \bv\|_{H^r_h(\gamma)}+h^{1-r}\|\bu-\pt\bv\|_{H^1_h(\gamma)} \lesssim h \|\bu\|_{H^{r+1}(\gamma)}+h^{2k-r}\|{\bm f}\|_{L_2(\gamma)}$. 
We then add and subtract terms
and apply inverse estimates to obtain
%
\begin{align*}
\|\pt{\bu}_h\|_{H^r_h(\gamma)} 
&\le \|\bu-\pt \bv\|_{H^r_h(\gamma)}+\|\bu_h-\pt \bv\|_{H^r_h(\gamma)}+\|\bu\|_{H^r(\gamma)}\\
&\lesssim  h \|\bu\|_{H^{r+1} (\gamma)} + h^{2k-r}\|{\bm f}\|_{L_2(\gamma)}+h^{1-r} \|\pt \bu_h-\pt \bv\|_{H^1_h(\gamma)}+\|\bu\|_{H^r(\gamma)}\\
&\lesssim  h \|\bu\|_{H^{r+1} (\gamma)} +h^{2k-r}\|{\bm f}\|_{L_2(\gamma)}+h^{1-r} \|\bu - \pt \bu_h\|_{H^1_h(\gamma)}+\|\bu\|_{H^r(\gamma)}.
\end{align*}

We apply this estimate to \eqref{eqn:IIStart} 
and then employ Theorem \ref{thm:EnergyConv} to obtain
\begin{equation}
\label{eqn:IIBoundF}
\begin{split}
II
&\lesssim \left(h^{r+1}\|{\bu}\|_{H^{r+1} (\gamma)} +h^{2k-1} \|\pt{\bu}_h\|_{L_2(\gamma)} +h^{2k+1}\|{\bm f}\|_{L_2(\gamma)}+h \|\bu-\pt{\bu}_h\|_{H^1_h(\gamma)} \right) \|\bu-\pt{\bu}_h\|_{L_2(\gamma)}\\
%
%
&\lesssim \Big(h^{r+1}(\|{\bu}\|_{H^{r+1} (\gamma)} +\|p\|_{H^{r}(\gamma)})+   (h^{k+1}+h^{2k-1})\|{\bm f}\|_{L_2(\gamma)} 
\Big) \|\bu-\pt{\bu}_h\|_{L_2(\gamma)}.
\end{split}
\end{equation}
Finally, we obtain the desired result by combining \eqref{eqn:DualityStart}, \eqref{eqn:IBound} and \eqref{eqn:IIBoundF}.
\end{proof}

\begin{remark}
In the isoparametric case $k=r\ge 2$, Theorems \ref{thm:EnergyConv}--\ref{thm:L2Estimate} yield
optimal-order error estimates in the energy and $L^2$ norms:
\begin{align*}
\|\bu-\pt \bu_h\|_{L_2(\gamma)}+ h\big(\|\bu-\pt \bu_h\|_{H^1_h(\gamma)}+\|p-p_h^\ell\|_{L_2(\gamma)}\big) \lesssim h^{r+1}\big(\|\bu\|_{H^{r+1}(\gamma)}+ \|p\|_{H^r(\gamma)}+ \|{\bm f}\|_{L_2(\gamma)}\big).
\end{align*}
\end{remark}

\begin{remark}[Case of equidistant Lagrange nodes]
In the case of standard Lagrange nodes, where the edge degrees
of freedom are equidistant,  Lemma \ref{lem:JumpDef} is no longer
valid. Instead, using the error estimate of the $(r+1)$-point Newton-Cotes rules
in Lemma \ref{lem:JUMP}, the estimate \eqref{eqn:JumpDef2} would be replaced by
\begin{align*}
 \sum_{e\in \calE_{h,k}} \int_{e^\gamma} {\rm Def}_\gamma \bw: \jump{\pt \bv}\lesssim 
 h^{r'-2r+s+1+t} \|\bw\|_{H^{t+1}(\gamma)} \|\pt \bv\|_{H^s_h(\gamma)} + h^{2k-1} \|\bw\|_{H^2(\gamma)} \|\pt{\bv}\|_{L_2(\gamma)}\quad s=0,1,\ldots r,
 \end{align*}
 where $r'=r+1$ if $r$ is even and $r' = r$ if $r$ is odd. Using this estimate instead of \eqref{eqn:JumpDef2}
 in the proof of Theorem \ref{thm:EnergyConv} (where we take the parameters $t=r-1$ and $s=1$) yields
 a convergence rate in the energy norm of order $r'-r+1$, i.e., first order if $r$ is odd and second order if $r$ is even.
 However, numerical experiments below suggest  these rates are not sharp for $r \ge 3$.
\end{remark}

\section{Numerical Experiments}\label{sec-numerics}

We took $\gamma$ to be the ellipsoid given by $\Psi(x,y,z):=\frac{x^2}{1.1^2} + \frac{y^2}{1.2^2} + \frac{z^2}{1.3^2}=1$.  The test solution is 
{$\bu=\bPi (-z^2, x, y)^\intercal$}.  Note that $\bPi={\bf I}-\bnu\otimes \bnu$ with $\bnu=\frac{\nabla \Psi}{|\nabla \Psi |}$ on $\gamma$, so $\bu$ is componentwise a rational function and not a polynomial.  The pressure is $p=xy^3+z$.  The incompressibility condition ${\rm div}_\gamma \bu=0$ does not hold, so the Stokes system must be solved with nonzero divergence constraint.  We employed a MATLAB code built on top of the iFEM library \cite{Ch09PP}.  The implementation of the degrees of freedom for the velocity space is described in \cite{DemlowNeilan23}.  Briefly, the DOF handler (degree of freedom data structure) must be modified to correctly translate between standard degrees of freedom defined on a Euclidean reference element and the global surface degrees of freedom.  In the case of $\mathbb{P}^2-\mathbb{P}^1$ elements it is possible to employ a standard implementation of the reference Euclidean velocity space, while for higher-order elements a modified implementation incorporating Gauss-Lobatto edge degrees of freedom is needed.

In our tests we employed standard Taylor-Hood quadratic ($\mathbb{P}^2-\mathbb{P}^1$) elements with degrees of freedom at the standard Lagrange nodes, which coincide with the Gauss-Lobatto nodes on element edges.  We also ran experiments with cubic ($\mathbb{P}^3-\mathbb{P}^2$) and quartic ($\mathbb{P}^4-\mathbb{P}^3$) elements, both using standard Lagrange nodes and using Gauss-Lobatto nodes as the edge degrees of freedom.  Interior degrees of freedom were chosen to ensure unisolvence.  Except as otherwise noted, an isoparametric surface approximation $k=r$ was used.  

In Figure \ref{fig_n1}, optimal order convergence in the energy norm $|\ipt \bu-\bu_h|_{H_h^1(\Gamma_{h,k})}+ \|p^e-p_h\|_{L_2(\Gamma_{h,k})}$ and in the velocity $L_2$ norm $\|\ipt \bu -\bu_h\|_{L_2(\Gamma_{h,k})}$ are verified for quadratic, cubic, and quartic Taylor-Hood elements using Gauss-Lobatto edge degrees of freedom.   

\begin{figure}[h]
\begin{center}
\includegraphics[scale=.25]{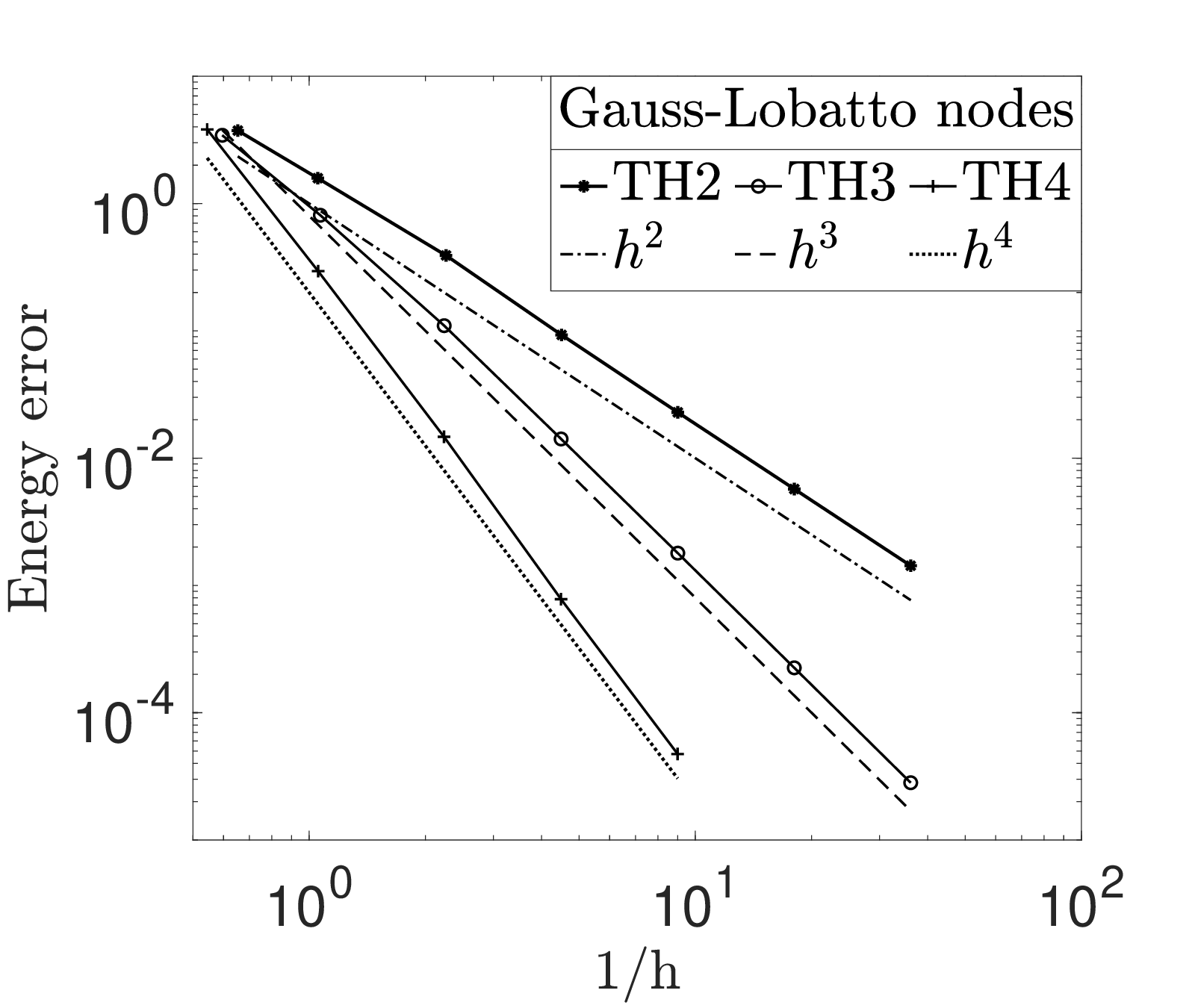}
\includegraphics[scale=.25]{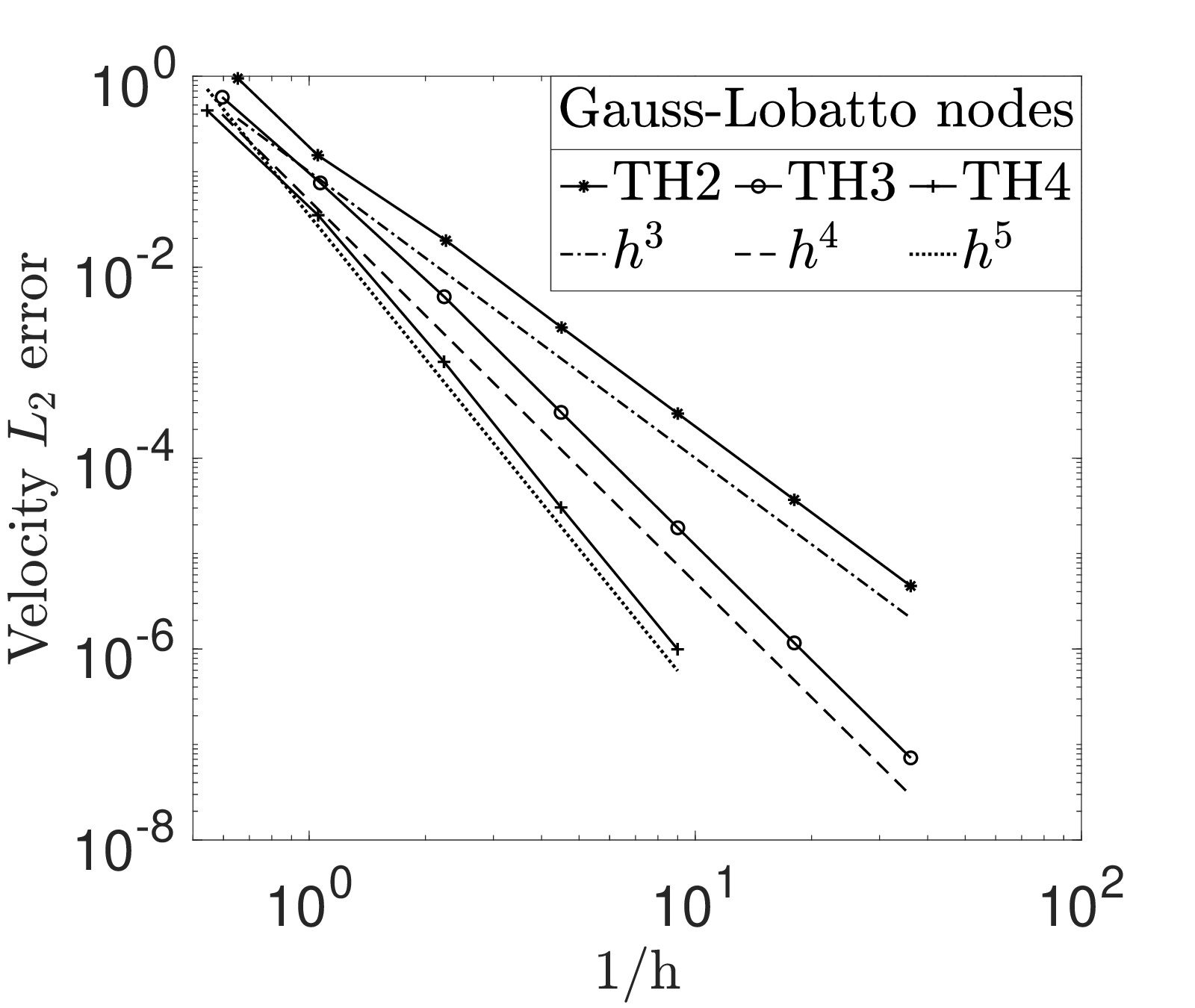}
\end{center}
\caption{Convergence in the energy norm $|\ipt \bu-\bu_h|_{H_h^1(\Gamma_{h,k})}+ \|p^e-p_h\|_{L_2(\Gamma_{h,k})}$ (left) and $L_2$ norm $\|\ipt \bu -\bu_h\|_{L_2(\Gamma_{h,k})}$ (right) for quadratic, cubic, and quartic Taylor-Hood elements using Gauss-Lobatto edge degrees of freedom.}
\label{fig_n1}
\end{figure}

In Figure \ref{fig_n2}, convergence histories in the energy norm 
and in the velocity $L_2$ norm 
are displayed for quadratic, cubic, and quartic Taylor-Hood elements using Lagrange edge degrees of freedom.   As above, convergence for quadratic elements is optimal since Lagrange and Gauss-Lobatto edge degrees of freedom coincide.  For cubic elements the orders of convergence in the energy and $L_2$ norms degenerate clearly to \rev{2 and 3}, respectively, which is suboptimal by one order.  For quartic elements the order of convergence in the energy norm is clearly decreasing to 3, whereas the decrease in the order of convergence in the $L_2$ norm is present but not as pronounced over the range of $h$ values tested.  

\begin{figure}[h]
\begin{center}
\includegraphics[scale=.25]{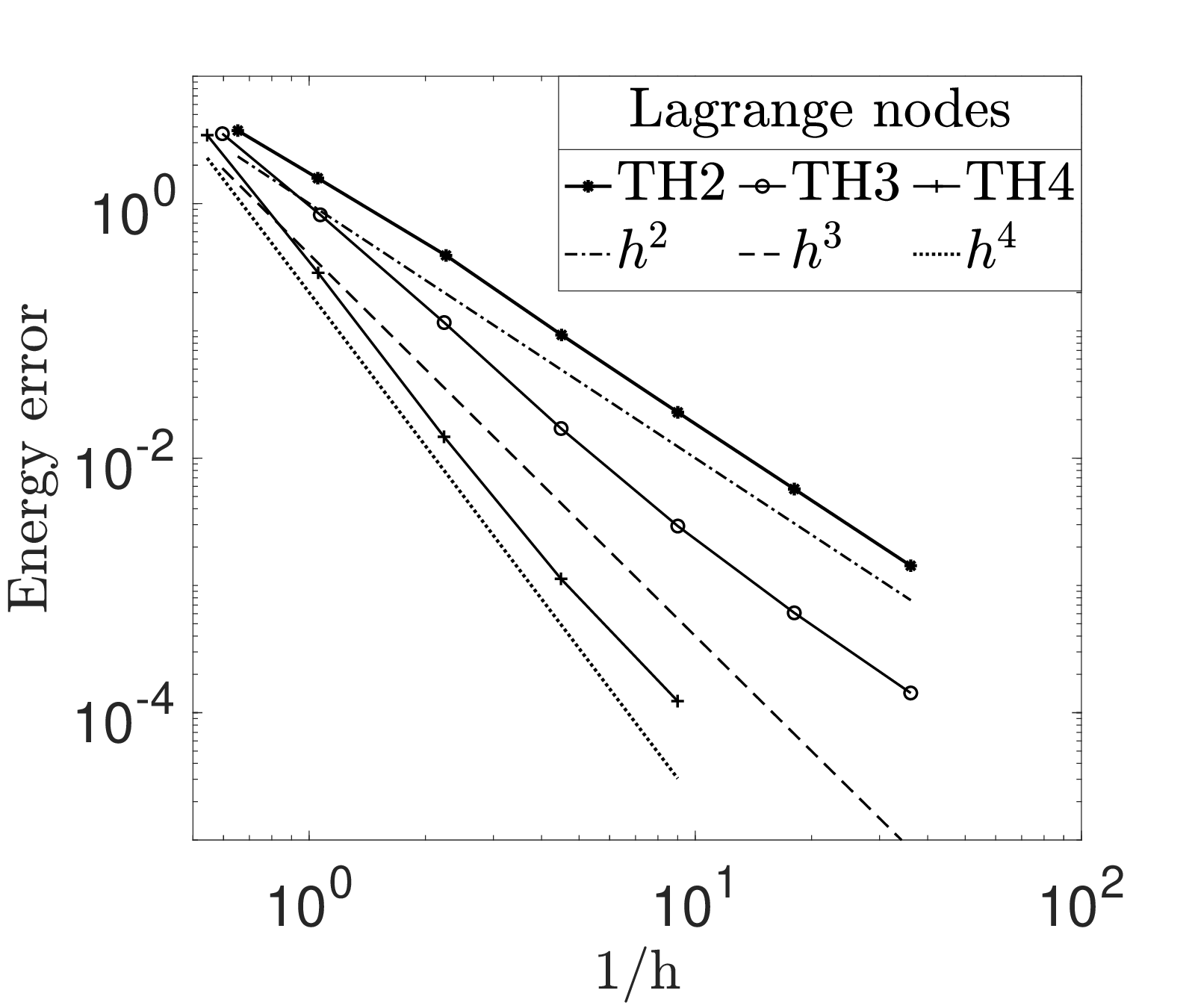}
\includegraphics[scale=.25]{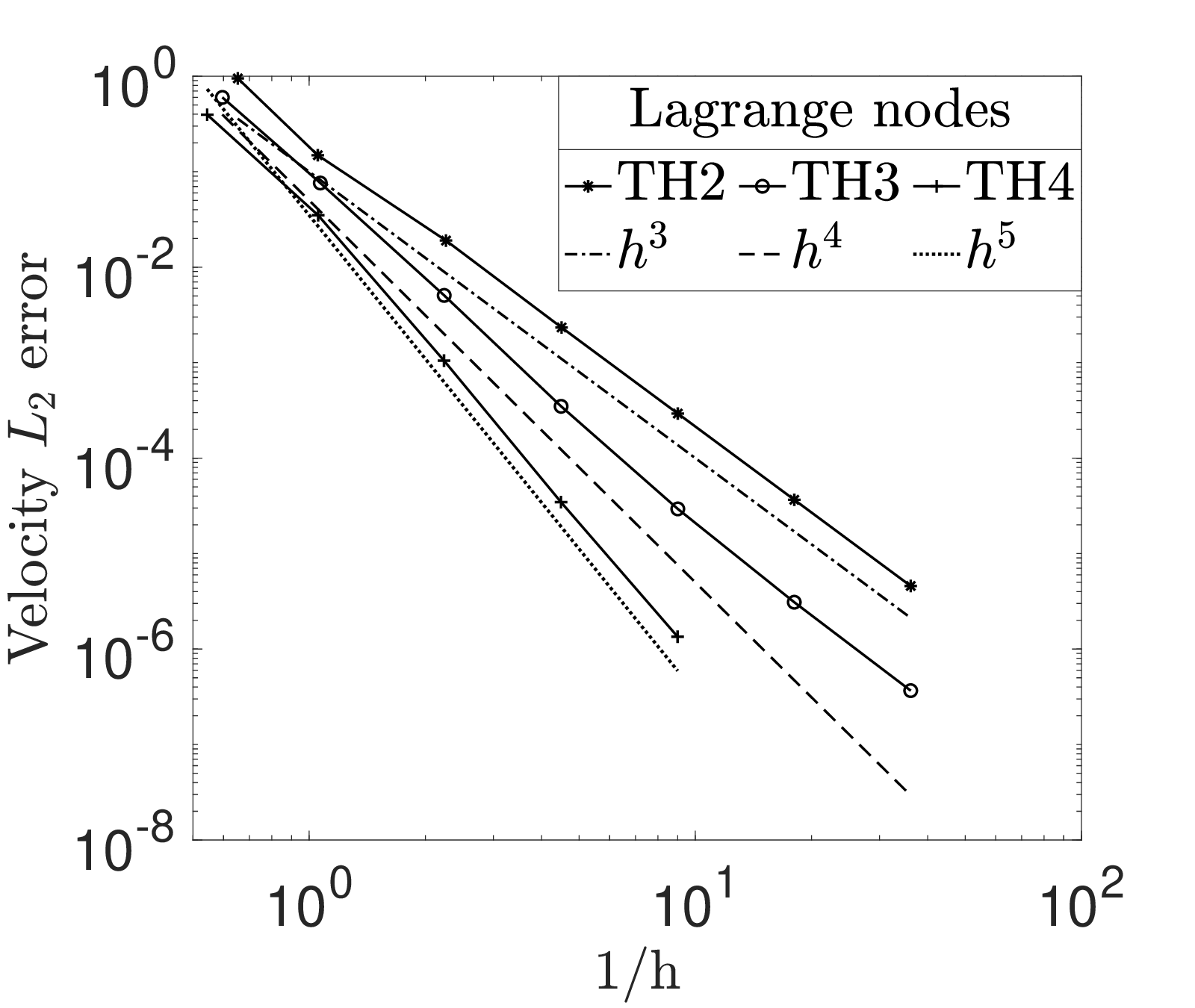}
\end{center}
\caption{\rev{Convergence} in the energy norm $|\ipt \bu-\bu_h|_{H_h^1(\Gamma_{h,k})}+ \|p^e-p_h\|_{L_2(\Gamma_{h,k})}$ (left) and $L_2$ norm $\|\ipt \bu -\bu_h\|_{L_2(\Gamma_{h,k})}$ (right) for quadratic, cubic, and quartic Taylor-Hood elements using Lagrange degrees of freedom.}
\label{fig_n2}
\end{figure}

Finally, we tested a superparametric approximation for cubic elements, that is, we employed $r=3$ and $k=4$.  The results in Figure \ref{fig_n3} indicate almost no change in the error when employing a higher-order surface approximation.  
\begin{figure}[h]
\begin{center}
\includegraphics[scale=.25]{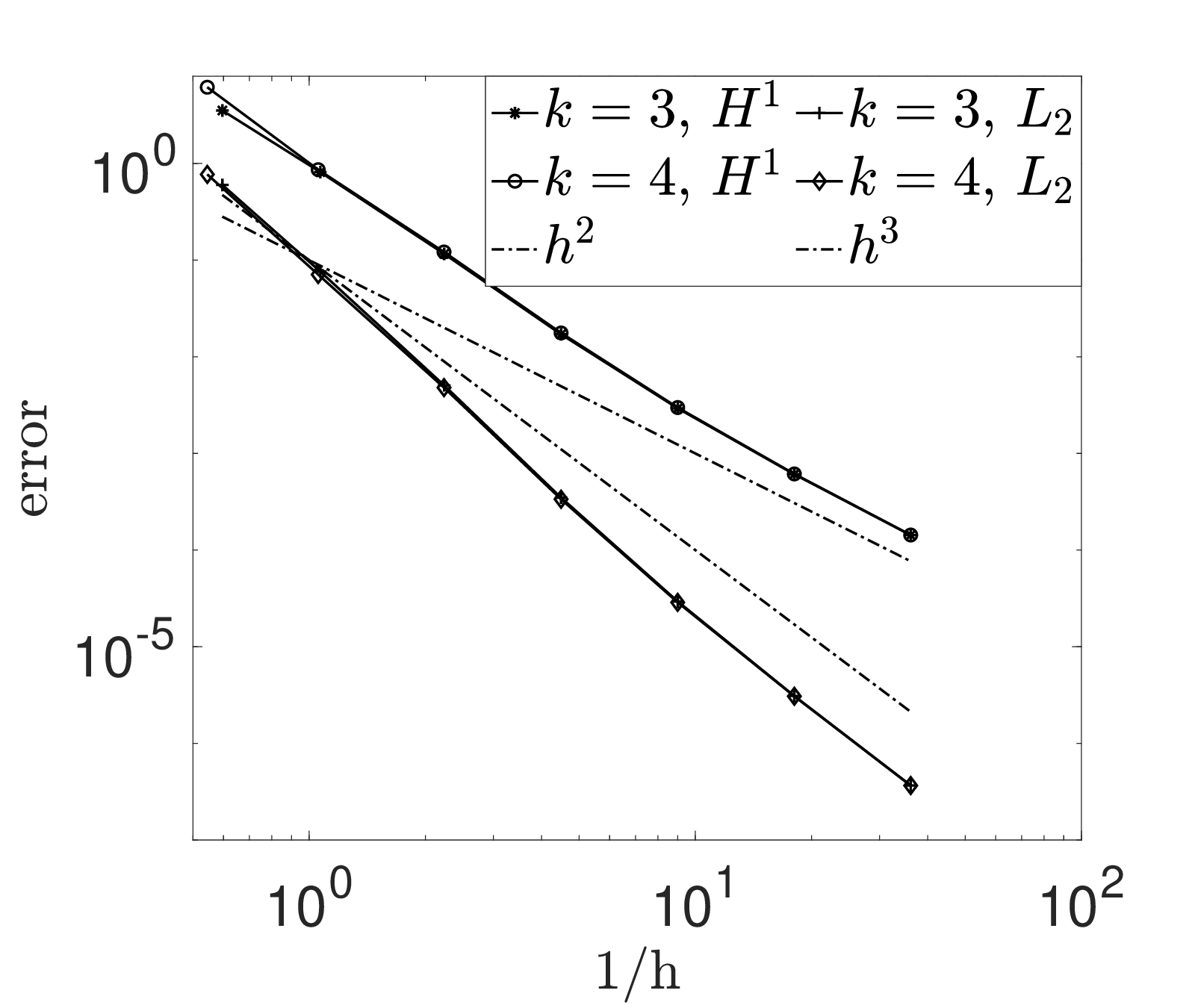}
\end{center}
\caption{\rev{Convergence} in the energy norm $|\ipt \bu-\bu_h|_{H_h^1(\Gamma_{h,k})}+ \|p^e-p_h\|_{L_2(\Gamma_{h,k})}$ and $L_2$ norm $\|\ipt \bu -\bu_h\|_{L_2(\Gamma_{h,k})}$ for cubic Taylor-Hood elements using Lagrange degrees of freedom, and cubic and quartic surface approximations.}
\label{fig_n3}
\end{figure}

Collectively these experiments verify that employing Gauss-Lobatto nodes as edge degrees of freedom is both sufficient and necessary to achieve optimal convergence using our method.  In addition, the nonconformity error is confirmed to be essentially independent from the geometric error since increasing the degree of the surface approximation does not lead to an optimal order of convergence when using Lagrange degrees of freedom.

\section*{Acknowledgements} The authors thank Orsan Kilicer for assistance with numerical computations.

\bibliographystyle{siam}
\bibliography{literatur}

\appendix


\section{Proof of Lemma \ref{lem:ChangeVarE}}
Let $r:[0,1]\to \bar e$ be some parameterization of $\bar e$ such that $\bar \bt_e\circ r(t) =  \frac{r'(t)}{|r'(t)|}$. 
Then $\bPsi\circ r$ is a parameterization of $e$, and we have
\begin{align*}
\int_e q 
&= \int_0^1 q((\bPsi\circ r)(t)) |(\bPsi\circ r)'(t)|\, dt = \int_0^1 (q\circ \bPsi)\circ r(t) |(\nab \bPsi\circ r(t))r'(t)|\, dt\\
&= \int_0^1 (q\circ \bPsi)\circ r(t) |(\nab \bPsi\bar \bt_e)\circ r(t))|\cdot |r'(t)|\, dt\\
&= \int_0^1 (\mu_e (q\circ \bPsi))\circ r(t) |r'(t)|\, dt\\
& = \int_{\bar e} \mu_e q\circ \bPsi.
\end{align*}

\section{Proof of Lemma \ref{lem:xPert}}
First, we write
\begin{align}\label{calPPsi}
\calP_{\bPsi_{\bar{K}}}=\calP_{\ba_k} \calP_{F^{-1}_{\bar K}} = 
\frac{\sqrt{ {\rm det} \nabla F_{\bar K}^\top \nabla F_{\bar K}}}{\sqrt{{\rm det} \nabla \ba_k^\top \nabla \ba_k}} \nabla \ba_k (\nabla F_{\bar K}^\top \nabla F_{\bar K})^{-1} \nabla F_{\bar K}^{\top}.
\end{align}
Therefore, for $\bx$ tangent to $\bar K$, and by writing $\ba_k = F_{\bar K}+\rev{\bell_k}$, we have 
\begin{align*}
\calP_{\bPsi_{\bar{K}}} \bx  
&= \frac{\sqrt{ {\rm det} \nabla F_K^\top \nabla F_{\bar K}}}{\sqrt{{\rm det} \nabla \ba_k^\top \nabla \ba_k}} (\nabla F_{\bar K}+\nab \bell_k) (\nabla F_{\bar K}^\top \nabla F_K)^{-1} \nabla F_{\bar K}^{\top}\bx\\
%
%
&= \bx + \left(\frac{\sqrt{ {\rm det} \nabla F_K^\top \nabla F_{\bar K}}}{\sqrt{{\rm det} \nabla \ba_k^\top \nabla \ba_k}}-1\right)\bx 
+ \frac{\sqrt{ {\rm det} \nabla F_K^\top \nabla F_{\bar K}}}{\sqrt{{\rm det} \nabla \ba_k^\top \nabla \ba_k}} \nab \bell_k (\nab F_{\bar K}^\top \nab F_{\bar K})^{-1} \nab F_{\bar K}^\top \bx.
\end{align*}
Now $(\nab F_{\bar K}^\top \nab F_{\bar K})(\nab \ba_k^\top \nab \ba_k)^{-1} = {\bf I} + {\bf B}(\nab \ba_k^\top \nab \ba_k)^{-1}$ for some ${\bf B}$ satisfying 
$\|{\bf B}\|_{L_\infty(\hat K)}\lesssim \|\rev{\nab \bell_k}\|_{L_\infty(\hat K)}\|\nab \ba_k\|_{L_\infty(\bar K)}+\|\rev{\nab \bell_k}\|_{L_\infty(\hat K)}^2$.
Using the estimates \eqref{eqn:FbarEst}--\eqref{eqn:baPert}, we have
\[
\det\left((\nab F_{\bar K}^\top \nab F_{\bar K})(\nab \ba_k^\top \nab \ba_k)^{-1}\right) = 1+O(h),
\]
and 
\[
\|\nab \bell_k (\nab F_{\bar K}^\top \nab F_{\bar K})^{-1} \nab F_{\bar K}^\top\|_{L_\infty(\hat K)}\lesssim h.
\]
Combining these two estimates then yields
\begin{align*}
\left|\calP_{\bPsi_{\bar{K}}} \bx -\bx\right| \lesssim h |\bx|
\end{align*}
for all vectors $\bx$ tangent to $\bar K$, and so
\begin{align*}
\left|\calP_{\bPsi^{-1}_{{K}}} \bx -\bx\right| \lesssim h |\calP_{\bPsi^{-1}_K} \bx|\lesssim h |\bx|
\end{align*}
for all vectors $\bx$ tangent to $K$.

\section{Proof of Lemma \ref{lem:PiolaOfa}}\label{app:ProofPiolaOfa}


We follow the arguments given in \cite[Lemma 2.3]{NeilanOtus21}.
Set $g = (\det(\nab \ba_K^\intercal \nab \ba_K))^{1/2}:\hat K\to \bbR$.  
Letting $E(m,j) = \{\alpha\in \bbN_0^m:\ |\alpha|=j\text{ and } \sum_{j=1}^m j \alpha_j = m\}$,
there holds the  chain rule \cite{Bernardi89}
\begin{align}
\label{chain_rule}
D^m (f_1\circ f_2) = \sum_{j=1}^m (D^j f_1)\circ f_2 \sum_{\alpha\in E(m,j)} c_\alpha \prod_{i=1}^m (D^i f_2)^{\alpha_i}
\end{align}
for some constants $c_\alpha\in \bbR$.

Now set $f =  \det(\nab \ba_K^\intercal \nab \ba_K)$, so that $g = f^{1/2}=f_1 \circ f_2$ with $f_1(x)=x^{1/2}$ and $f_2=f$.  A simple induction argument and \eqref{eqn:akGood} shows $|f|_{W^m_\infty(\hat K)}\lesssim h^{4+m}$.  Applying \eqref{chain_rule} and recalling \eqref{eqn:akGood} then yields for any multi-index $\beta$ with $|\beta|=m$,
\begin{align*}
\left|\frac{\p^m }{\p \hat x^\beta}g \right|
& =\left|\frac{\p^m }{\p \hat x^\beta}\left(f^{1/2}\right)\right| \lesssim \sum_{j=1}^m |f^{1/2-j}| \sum_{\alpha \in E(m,j)} \prod_{i=1}^m |(D^i f)|^{\alpha_i}
\\ & \lesssim \sum_{j=1}^m h^{4(1/2-j)} \sum_{\alpha \in E(m,j)} \prod_{i=1}^m h^{(4+i)\alpha_i}
 \lesssim \sum_{j=1}^m h^{2-4j} \sum_{\alpha \in E(m,j)} h^{4j+m}
\\ & \lesssim h^{m+2}.
\end{align*}
Applying \eqref{chain_rule} now with $f_1(x) = x^{-1}$ and $f_2=g$ and recalling that $g \approx h^2$ from \eqref{eqn:akGood}, we have for any multi-index $\alpha = (\alpha_1,\alpha_2)$ with $|\alpha|=m$
\begin{align*}
\left|\frac{\p^m }{\p \hat x^\alpha}\left(\frac1{g}\right)\right| 
& \lesssim \sum_{j=1}^m |g|^{-(j+1)} \sum_{\alpha \in E(m,j)} \prod_{i=1}^m |(D^i g)|^{\alpha_i}
\\ & \lesssim \sum_{j=1}^m h^{-2(j+1)} \sum_{\alpha \in E({m,j})} \prod_{i=1}^m h^{(i+2)\alpha_i}
\\ & \lesssim \sum_{j=1}^m h^{-2(j+1)} \cdot h^{m+2j}  \lesssim h^{m-2}.
\end{align*}
We then conclude
\begin{align*}
|g^{-1}|_{W^m_\infty(\hat K)} \lesssim h^{m-2}.
\end{align*}
We then apply the product rule and \eqref{eqn:akGood} to obtain
\begin{align*}
|\calP_{\ba_K}|_{W^m_\infty(\hat K)}
&\lesssim \sum_{\ell=0}^m |\nab \ba_K|_{W^\ell_\infty(\hat K)} |g^{-1}|_{W^{m-\ell}_\infty(\hat K)}
\lesssim h^{m-1}.
\end{align*}


Next, we write $(\calP_{\ba_K})^\dagger = g (\nab \ba_K^\intercal \nab \ba_K)^{-1}\nab \ba_K^\intercal = g^{-1} {\rm adj}(\nab \ba_K^\intercal \nab \ba_K) \nab \ba_K^\intercal$, note that $|{\rm adj}(\nab \ba_K^\intercal \nab \ba_K)|_{W_\infty^q(\hat K)}=|(\nab \ba_K^\intercal \nab \ba_K)|_{W^{q}_\infty(\hat K)}$ because $\nab \ba_K^\intercal \nab \ba_K$ is a $2 \times 2$ matrix, 
and apply similar arguments as above to obtain
\begin{align*}
\left|(\calP_{\ba_K})^\dagger\right|_{W^m_\infty(\hat K)}
&\lesssim \sum_{\ell=0}^m |g^{-1}|_{W^{m-\ell}_\infty(\hat K)} |{\rm adj}(\nab \ba_K^\intercal \nab \ba_K)\nab \ba_K|_{W^\ell_\infty(\hat K)}\\
&\lesssim \sum_{\ell=0}^m h^{m-\ell-2}  \sum_{q=0}^\ell |(\nab \ba_K^\intercal \nab \ba_K)|_{W^{q}_\infty(\hat K)} |\nab \ba_K|_{W^{\ell-q}_\infty(\hat K)}\\
%
&\lesssim \sum_{\ell=0}^m h^{m-\ell-2}  \sum_{q=0}^\ell  h^{\ell+3}
\lesssim h^{m+1}.
\end{align*}

Using \eqref{eqn:FbarEst}, we have $\|(\calP_{F_{\bar K}})^\dagger\|_{L_\infty(\hat K)}\lesssim h$. 
Consequently,  by \eqref{eqn:PiolaOfa} and since $(\calP_{F_{\bar K}})^\dagger$ is constant, we have
\begin{align*}
|\calP_{\bPsi}|_{W^m_\infty(\bar K)}
&\lesssim |F_{\bar K}^{-1}|_{W^1_\infty(\bar K)}^m
 |\calP_{\ba_K}|_{W^m_\infty(\hat K)}\|(\calP_{F_{\bar K}})^\dagger\|_{L_\infty(\hat K)}\lesssim 1,
\end{align*}
for $m\ge 1$.
Likewise, we have $\|\calP_{F_{\bar K}}\|_{L_\infty(\hat K)}\lesssim h^{-1}$ and so by \eqref{eqn:PiolaOfa}, \eqref{eqn:akGood},
and \eqref{chain_rule},
\[
\|\calP_{\bPsi^{-1}}\|_{L_\infty(K)} \lesssim \|\calP_{F_{\bar K}}\|_{L_\infty(\hat K)} \|(\calP_{\ba_K})^\dagger\|_{L_\infty(\hat K)}\lesssim 1,
\]
and
\begin{align*}
|\calP_{\bPsi^{-1}}|_{W^m_\infty(K)}
&\lesssim \|\calP_{F_{\bar K}}\|_{L_\infty(\hat K)}\sum_{j=1}^m |(\calP_{\ba_K})^\dagger|_{W^j_\infty(\hat K)}
\prod_{\alpha\in E_{(m,j)}} |\ba_K^{-1}|_{W^i_\infty(K)}^{\alpha_i}\\
&\lesssim h^{-1}\sum_{j=1}^m h^{-j} |(\calP_{\ba_K})^\dagger|_{W^j_\infty(\hat K)}\lesssim 1.
\end{align*}

To prove the remaining inequalities, we recall \eqref{chain_rule} and use \eqref{eqn:PiolaOfa}, \eqref{eqn:akGood}, and the product rule
to obtain
\begin{align}
\label{eqn:wbound}
\begin{aligned}
|\bw|_{H^m(K)}
& = |(\calP_{\ba_K} \hat \bw) \circ \ba_K^{-1} |_{H^m(K)}\lesssim 
h \sum_{j=1}^m \sum_{\ell=0}^j |\calP_{\ba_K}|_{W_\infty^{j-\ell}(\hat K)} |\hat \bw|_{H^\ell(\hat K)}
\sum_{\alpha\in E(m,j)} \prod_{i=1}^m |\ba_K^{-1}|_{W^i_\infty(K)}^{\alpha_i}\\
&\lesssim h \sum_{j=1}^m \sum_{\ell=0}^j h^{j-\ell-1} |\hat \bw|_{H^\ell(\hat K)}
\sum_{\alpha\in E(m,j)} \prod_{i=1}^m h^{-\alpha_i}\\
&\lesssim h \sum_{j=1}^m \sum_{\ell=0}^j h^{j-\ell-1} |\hat \bw|_{H^\ell(\hat K)} \cdot h^{-j}\\
%
%
&\lesssim \sum_{\ell=0}^m h^{-\ell} |\hat \bw|_{H^\ell(\hat K)}.
\end{aligned}
\end{align}
%
%
Likewise, we use \eqref{eqn:PiolaOfa} and  \eqref{eqn:akGood} to obtain
\begin{align*}
|\hat \bw|_{H^m(\hat K)} 
&=  |(\calP_{\ba^{-1}_K})  {(\bw\circ \ba_K)}|_{H^m(\hat K)}
\lesssim h^{-1} \sum_{r=0}^m |(\calP_{\ba_K})^\dagger|_{W^{m-r}_\infty(\hat K)}  \sum_{j=1}^r |\bw|_{H^{j}(K)} \sum_{\alpha\in E(r,j)} \prod_{i=1}^r |\ba_K|_{W^i_\infty(\hat K)}^{\alpha_i}\\
&\lesssim h^{-1}   \sum_{r=0}^m h^{m-r+1}  \sum_{j=1}^r |\bw|_{H^{j}(K)} \sum_{\alpha\in E(r,j)} \prod_{i=1}^r h^{i\alpha_i}\\
&\lesssim h^{-1}   \sum_{r=0}^m h^{m-r+1}  \sum_{j=1}^r |\bw|_{H^{j}(K)} \cdot h^{r}\lesssim h^m \|\bw\|_{H^m(K)}.
\end{align*}

Finally, the estimate \eqref{eqn:PhiPiolaNormEquiv} directly follows from \eqref{eqn:PiolaBounds} 
and \eqref{eqn:bPsiHSBound}.

\section{Proof of Lemma \ref{lem:DefCons}}
The proof of the estimate \eqref{eqn:DefChain}
mostly follows from the arguments in \cite[Lemma 2.2]{DemlowNeilan23}, so we only
present the main points.

First, by the chain rule (cf.~(A.1) in \cite{DemlowNeilan23})
we have
\begin{align*}
\nab (\bv\circ \bp^{-1}) \bPi 
&= \left(\nab \bv \bPi_h \big[{\bf I} - \frac{\bnu\otimes \bnu_h}{\bnu\cdot \bnu_h}\big]\big[{\bf I}- d {\bf H}\big]^{-1}\right)\circ \bp^{-1}.
\end{align*}
Next, set ${\bf H}_h = \nab \bnu_h$ on $K$ (where $\bnu_h$ is extended to $U_\delta$).
Then since $\bPi_h \bv = \bv$, there holds
\[
\nab \bv \bPi_h = \nab (\bPi_h \bv)\bPi_h = \left(\bPi_h \nab \bv- \bnu_h \otimes ({\bf H}_h \bv)\right)\bPi_h = \nab_{\Gamma_{h,k}} \bv - \bnu_h\otimes ({\bf H}_h \bv).
\]
Setting ${\bf L} = \mu_h^{-1} \big[\bPi - d {\bf H}\big]$, we have
\begin{equation}
\label{eqn:ChainRuleFun}
\begin{split}
\nab_\gamma \pt{\bv} 
& = {\bf L} \left( \big(\nab_{\Gamma_{h,k}}\bv )\big) \left[ {\bf I} - \frac{\bnu\otimes \bnu_h}{\bnu\cdot \bnu_h}\right]
\left[{\bf I}-d {\bf H}\right]^{-1}\right)\circ \bp^{-1}\\
&\qquad  - \left(({\bf L}\bnu_h) \otimes ({\bf H}_h \bv) \left[ {\bf I} - \frac{\bnu\otimes \bnu_h}{\bnu\cdot \bnu_h}\right]
\left[{\bf I}-d {\bf H}\right]^{-1}\right)\circ \bp^{-1}
+ \bPi \nab ({\bf L}\circ \bp^{-1}) \bv\circ \bp^{-1} \bPi.
\end{split}
\end{equation}
Using the estimates $|{\bf L} \bnu_h|\lesssim h^k$
and $|{\bf L}- \bPi_h|\lesssim h^k$
and applying
the same arguments as in \cite[Lemma 2.2]{DemlowNeilan23},
we obtain 
\begin{equation}\label{eqn:DefProof1}
\left|{\rm Def}_{\gamma} \pt{\bv} - ({\rm Def}_{\Gamma_{h,k}} \bv)\circ \bp^{-1}\right|\lesssim 
h^k \big(|(\nab_{\Gamma_{h,k}} \bv)\circ \bp^{-1}| + |\bv\circ \bp^{-1}|\big) + \left|\bPi \nab ({\bf L}\circ \bp^{-1}) \bv\circ \bp^{-1} \bPi\right|.
\end{equation}

Next, we have (cf.~\cite[(A.5)--(A.6)]{DemlowNeilan23})
\begin{align*}
 \left|\bPi \nab ({\bf L}\circ \bp^{-1}) \bv\circ \bp^{-1} \bPi\right|
 &\lesssim \left|\bPi \big(\nab {\bf L} \bv \bPi_h\big)\circ \bp^{-1}\right|,
\end{align*}
and 
\begin{align*}
\bPi \nab {\bf L} \bv \bPi_h
& = -\mu_h^{-1} \bPi\left[ ({\bf L}\bv)\otimes \nab \mu_h + \bnu \otimes ({\bf H} \bv)+({\bf H}\bv)\otimes \bnu + (\bnu\cdot \bv) {\bf H} + d \nab {\bf H}\bv\right]\bPi_h\\
& = -\mu_h^{-1} \left[ ({\bf L}\bv)\otimes (\bPi_h \nab \mu_h) +({\bf H}\bv)\otimes (\bPi_h \bnu) + (\bnu\cdot \bv) {\bf H}\bPi_h  + d \bPi \nab {\bf H}\bv\bPi_h\right]\\
& = -  ({\bf L}\bv)\otimes (\bPi_h \nab \mu_h)+O(h^k |\bv|)
\end{align*}
We write $\mu_h = \bnu\cdot \bnu_h \det({\bf I}-d {\bf H})$
and note that $\nab \det({\bf I}-d {\bf H}) = -\bnu {\rm tr}({\bf H}) +O(h^{k+1})$.
Consequently,
\begin{equation}
\label{eqn:JacobiFun}
\begin{split}
\nab \mu_h 
&= \nab (\bnu\cdot \bnu_h) \det({\bf I}-d {\bf H}) + (\bnu\cdot \bnu_h) \nab \det({\bf I}-d {\bf H})\\
& = \big({\bf H}\bnu_h + {\bf H}_h \bnu\big)\det({\bf I}-d {\bf H}) -  {\rm tr}({\bf H})(\bnu\cdot \bnu_h) \bnu +O(h^{k+1})\\
& = - {\rm tr}({\bf H}) \bnu + O(h^k).
\end{split}
\end{equation}
Thus using $|\bPi_h \bnu|\lesssim h^k$, we obtain
\begin{align*}
\big|\bPi \nab {\bf L} \bv \bPi_h\big|\lesssim h^k |\bv|,
\end{align*}
and so
\begin{align}\label{eqn:DefProof2}
 \left|\bPi \nab ({\bf L}\circ \bp^{-1}) \bv\circ \bp^{-1} \bPi\right|\lesssim h^k|\bv\circ \bp^{-1}|.
\end{align}
The estimate \eqref{eqn:DefChain} is then obtained by combining \eqref{eqn:DefProof1} and \eqref{eqn:DefProof2}.


\section{Proof of Lemma \ref{lem:GhIBound}}\label{app:GhBoundProof}
Recall
\[
G(\bv,\bw) = a(\pt{\bv},\pt{\bw}) - a_h(\bv,\bw).
\]
We make a change of variables, add and subtract terms, and recall \eqref{l2geo} to get
\begin{align*}
G(\ipt{\bw},\ipt{\bv})  
& \lesssim \int_\gamma {\rm Def}_\gamma {\bw}: {\rm Def}_\gamma {\bv} - \int_{\Gamma_{h,k}} {\rm Def}_{\Gamma_{h,k}} \ipt{\bw}: 
 {\rm Def}_{\Gamma_{h,k}} \ipt{\bv} +h^{k+1}\|\bw\|_{L_2(\gamma)} \|\bv \|_{L_2(\gamma)}\\
 &= \int_\gamma {\rm Def}_\gamma {\bw}: {\rm Def}_\gamma {\bv} - \int_{\gamma} \mu_h^{-1} \big({\rm Def}_{\Gamma_{h,k}} \ipt\bw)^\ell: 
\big( {\rm Def}_{\Gamma_{h,k}} \ipt\bv\big)^\ell +h^{k+1}\|\bw\|_{L_2(\gamma)} \|\bv \|_{L_2(\gamma)}\\
%
%
%
 &= \int_\gamma \big({\rm Def}_\gamma {\bw} - ({\rm Def}_{\Gamma_{h,k}} \ipt \bw)^\ell\big): {\rm Def}_\gamma {\bv} 
 +\int_\gamma \big({\rm Def}_\gamma {\bv}-({\rm Def}_{\Gamma_{h,k}} \ipt \bv)^\ell\big):  {\rm Def}_\gamma \bw\\ 
&\qquad 
+\int_\gamma \big(({\rm Def}_{\Gamma_{h,k}} \ipt{\bw})^\ell - {\rm Def}_\gamma \bw\big):\big({\rm Def}_\gamma {\bv}-({\rm Def}_{\Gamma_{h,k}} \ipt \bv)^\ell\big)
\\ 
&\qquad 
+ \int_{\gamma} (1-\mu_h^{-1}) \big({\rm Def}_{\Gamma_{h,k}} \ipt \bw)^\ell: 
\big( {\rm Def}_{\Gamma_{h,k}} \ipt \bv\big)^\ell +h^{k+1}\|\bw\|_{L_2(\gamma)} \|\bv \|_{L_2(\gamma)}.
 \end{align*}
 Therefore by Lemma \ref{lem:DefCons} and $|1-\mu_h^{-1}|\lesssim h^{k+1}$, we have
\begin{equation}
\label{eqn:StartOfLong}
 \begin{split}
\big|G(\ipt{\bw},\ipt{\bv})  \big|
&\lesssim 
  \left|\int_\gamma \big({\rm Def}_\gamma {\bw} - ({\rm Def}_{\Gamma_{h,k}} \ipt \bw)^\ell\big): {\rm Def}_\gamma {\bv} \right|
 +\left|\int_\gamma \big({\rm Def}_\gamma {\bv}-({\rm Def}_{\Gamma_{h,k}} \ipt \bv)^\ell\big):  {\rm Def}_\gamma \bw\right|\\
 &\qquad + h^{k+1}\|\bw\|_{H^1(\gamma)}\|\bv\|_{H^1(\gamma)}.
 \end{split}
 \end{equation}
Thus, we turn our attention to estimating 
 \begin{align}\label{eqn:GEStarting}
 \int_\gamma \big({\rm Def}_\gamma {\bv} - ({\rm Def}_{\Gamma_{h,k}} \ipt \bv)^\ell\big): {\rm Def}_\gamma {\bw} 
 = \int_\gamma \big(\nab_\gamma {\bv}-(\nab_{\Gamma_{h,k}} \ipt \bv)^\ell\big): {\rm Def}_\gamma {\bw}
\end{align}
for $\bw,\bv\in \bH^2_T(\gamma)$; in the last equality we employ ${\bf A}^\intercal:{\bf B}= {\bf A}:{\bf B}$ when ${\bf B}^\intercal={\bf B}$.

Our starting point is \eqref{eqn:ChainRuleFun}, which we rephrase here for convenience:
\begin{equation*}
\begin{split}
\nab_\gamma {\bv} 
& = {\bf L} \left( \big(\nab_{\Gamma_{h,k}}\ipt \bv )\big) \left[ {\bf I} - \frac{\bnu\otimes \bnu_h}{\bnu\cdot \bnu_h}\right]
\left[{\bf I}-d {\bf H}\right]^{-1}\right)^\ell\\
&\qquad  - \left(({\bf L}\bnu_h) \otimes ({\bf H}_h \ipt \bv) \left[ {\bf I} - \frac{\bnu\otimes \bnu_h}{\bnu\cdot \bnu_h}\right]
\left[{\bf I}-d {\bf H}\right]^{-1}\right)^\ell
+ \bPi \nab ({\bf L}^\ell) \ipt \bv^\ell \bPi.
\end{split}
\end{equation*}
We then add and subtract terms to get
\begin{equation}
\label{eqn:JsExpand}
\begin{split}
\nab_\gamma {\bv} 
& = (\nab_{\Gamma_{h,k}} \ipt \bv)^\ell + \left(\big[{\bf L}-{\bf I}\big] \nab_{\Gamma_{h,k}} \ipt \bv\left[{\bf I} - \frac{\bnu\otimes \bnu_h}{\bnu\cdot \bnu_h} \right]\big[{\bf I}-d {\bf H}\big]^{-1}\right)^\ell\\
&\qquad +\left(\nab_{\Gamma_{h,k}} \ipt \bv\left(\left[{\bf I} - \frac{\bnu\otimes \bnu_h}{\bnu\cdot \bnu_h} \right] \big[{\bf I}-d {\bf H}\big]^{-1} - \bPi_h\right)\right)^\ell\\
&\qquad\qquad  - \left(({\bf L}\bnu_h) \otimes ({\bf H}_h \ipt \bv) \left[ {\bf I} - \frac{\bnu\otimes \bnu_h}{\bnu\cdot \bnu_h}\right]
\left[{\bf I}-d {\bf H}\right]^{-1}\right)^\ell+ \bPi \nab {\bf L}^\ell \ipt \bv^\ell\bPi\\
& =:(\nab_{\Gamma_{h,k}} \ipt \bv)^\ell  + J_1+J_2+J_3+J_4.
\end{split}
\end{equation}
We now bound each $J_i$.

We use $|{\bf L}- \bPi|\lesssim h^{k+1}$ and $|d|\lesssim h^{k+1}$ to obtain
\begin{align*}
\int_\gamma J_1: {\rm Def}_\gamma {\bw}
& \lesssim \int_\gamma \left(\big[\bPi - {\bf I}\big] \nab_{\Gamma_{h,k}} \ipt \bv \left[{\bf I} - \frac{\bnu\otimes \bnu_h}{\bnu\cdot \bnu_h}\right]\right)^\ell: {\rm Def}_\gamma {\bw} + h^{k+1}\|{\bw}\|_{H^1(\gamma)} \|{\bv}\|_{H^1(\gamma)}.
\end{align*}
We then use the algebraic relation $({\bf A}{\bf B}):{\bf C} = ({\bf A}^\intercal {\bf C}):{\bf B}$ and $(\bPi-{\bf I}){\rm Def}_\gamma \pt{\bw} = 0$
to get
\begin{align}\label{eqn:J1Bound}
\left|\int_\gamma J_1: {\rm Def}_\gamma {\bw}\right| \lesssim h^{k+1} \|{\bw}\|_{H^1(\gamma)} \|{\bv}\|_{H^1(\gamma)}.
\end{align}

For $J_2$, we first use $|d|\lesssim h^{k+1}$ to obtain
\begin{align*}
\left[{\bf I} - \frac{\bnu\otimes \bnu_h}{\bnu\cdot \bnu_h} \right] \big[{\bf I}-d {\bf H}\big]^{-1} - \bPi_h
&\lesssim \left(\left[{\bf I} - \frac{\bnu\otimes \bnu_h}{\bnu\cdot \bnu_h} \right] - \bPi_h\right)+h^{k+1}\\
&= \bnu_h\otimes \bnu_h -\frac{\bnu\otimes \bnu_h}{\bnu\cdot \bnu_h} +h^{k+1}.
\end{align*}
Therefore, using $\nab_{\Gamma_{h,k}} \ipt \bv (\bnu_h\otimes \bnu_h) = 0$, $|\nab_{\Gamma_{h,k}} \ipt \bv \bnu|\lesssim h^k$, 
the algebraic identity $({\bf A} \ba \otimes \bb) : {\bf B}= ({\bf A}\ba) \cdot ({\bf B} \bb)$, 
and $|\bnu_h^\intercal {\rm Def}_{\gamma} {\bw}|\lesssim h^k$, we have
\begin{align}\label{eqn:J2Bound}
\left|\int_\gamma J_2: {\rm Def}_\gamma {\bw}\right|\lesssim (h^{2k}+h^{k+1})\|{\bv}\|_{H^1(\gamma)} \|{\bw}\|_{H^1(\gamma)}\lesssim h^{k+1}\|{\bv}\|_{H^1(\gamma)} \|{\bw}\|_{H^1(\gamma)}.
\end{align}

Next, using $|{\bf L}-\bPi|\lesssim h^{k+1}$, $|d|\lesssim h^{k+1}$, 
and the algebraic relation $((\ba\otimes \bb) {\bf A}):{\bf B} = ({\bf B} {\bf A}^\intercal \bb)\cdot \ba$,
we obtain
\begin{equation}
\label{eqn:J3Start}
\begin{split}
\left|\int_{\gamma} J_3: {\rm Def}_\gamma \pt{\bw}\right|
&\lesssim \left|\int_{\gamma} \left((\bPi \bnu_h)\otimes ({\bf H}_h \ipt \bv)\left[{\bf I}- \frac{\bnu\otimes \bnu_h}{\bnu\cdot \bnu_h}\right]\right)^\ell : {\rm Def}_\gamma {\bw}\right|+h^{k+1} \|{\bv}\|_{H^1(\gamma)} \|{\bw}\|_{H^1(\gamma)}\\
&= \left|\int_{\gamma} \left({\rm Def}_\gamma {\bw} \left(\left[{\bf I} - \frac{\bnu_h\otimes \bnu}{\bnu\cdot \bnu_h}\right]{\bf H}_h \ipt \bv\right)^\ell\right)\cdot (\bPi \bnu^\ell_h)\right|
+h^{k+1} \|{\bv}\|_{H^1(\gamma)} \|{\bw}\|_{H^1(\gamma)}.
\end{split}
\end{equation}
The desired bound for $\int_{\gamma} J_3: {\rm Def}_\gamma \pt{\bw}$ is completed in the case $k=1$ by using ${\bf H}_h=0$.
When $k \ge 2$, we recall that $\bnu^\intercal {\bf H}=0$, $|{\bf H}_h - {\bf H}| \lesssim h^{k-1}$,   $|\bPi\bnu_h^\ell| \lesssim h^k$,
and $|\ipt{\bv}^\ell-\bv|\lesssim h^k |\bv|$.  
These relationships yield after noting that $2k-1 \ge k+1$ when $k \ge 2$
\begin{equation}
\label{eqn:J3Start2}
\begin{split}
\left|\int_{\gamma} J_3: {\rm Def}_\gamma \pt{\bw}\right|
&\lesssim 
 \left|\int_{\gamma} \left({\rm Def}_\gamma {\bw} \left(\left[{\bf I} - \frac{\bnu_h\otimes \bnu}{\bnu\cdot \bnu_h}\right]({\bf H}_h-{\bf H}) \ipt \bv\right)^\ell\right)\cdot (\bPi \bnu^\ell_h)\right|\\
&\qquad+ \left |  \int_\gamma \left ({\rm Def}_\gamma \bw {\bf H} \ipt \bv^\ell \right ) \cdot (\bPi \bnu_h^\ell) \right | 
+ h^{k+1} \|{\bv}\|_{H^1(\gamma)} \|{\bw}\|_{H^1(\gamma)}\\
&\lesssim  \left |  \int_\gamma \left ({\rm Def}_\gamma \bw {\bf H} \bv \right ) \cdot (\bPi \bnu_h^\ell) \right | + h^{k+1} \|{\bv}\|_{H^1(\gamma)} \|{\bw}\|_{H^1(\gamma)}.
\end{split}
\end{equation}

Next, following arguments from \cite{HansboLarsonLarsson20}, we have for any $\bz\in \bW^1_1(\Gamma_{h,k})$,
\begin{equation}
\label{eqn:HLTrick}
\begin{split}
\int_{\Gamma_{h,k}} \bz \cdot (\bPi_h \bnu) 
&= \int_{\Gamma_{h,k}} \bz \cdot \nab_{\Gamma_{h,k}} d = -\int_{\Gamma_{h,k}}  d({\rm div}_{\Gamma_{h,k}} \bz) 
\lesssim h^{k+1} \|\bz\|_{W^1_1(\Gamma_{h,k})},
\end{split}
\end{equation}
and, for $\bz\in \bW^1_1(\gamma)$,
\begin{align*}
\int_{\gamma} \bz\cdot \bPi \bnu_h^\ell 
&= -\int_{\gamma} \bz\cdot \bPi_h^\ell  \bnu-\int_{\gamma}\bz\cdot  (\bPi-\bPi^\ell_h)(\bnu-\bnu^\ell_h)\\
& \lesssim  -\int_{\gamma} \bz \cdot (\nab_{\Gamma_{h,k}} d)^\ell + h^{2k}\|\bz\|_{L_1(\gamma)}\\
&\lesssim -\int_{\Gamma_{h,k}} \bz^e \cdot \nab_{\Gamma_{h,k}} d + h^{k+1}\|\bz\|_{L_1(\gamma)}\\
& = \int_K d({\rm div}_{\Gamma_{h,k}} \bz^e)  +h^{k+1} \|\bz\|_{L_1(\gamma)}\\
&\lesssim h^{k+1}\|\bz\|_{W^1_1(\gamma)}. 
\end{align*}
Taking  $\bz = {\rm Def}_\gamma \bw {\bf H} \bv $
and applying the estimate to \eqref{eqn:J3Start2} yields
\begin{align}\label{eqn:J3Bound}
\left|\int_{\gamma} J_3: {\rm Def}_\gamma {\bw}\right|
&\lesssim  
h^{k+1}\|{\bv}\|_{H^1(\gamma)}\|{\bw}\|_{H^2(\gamma)}.
\end{align}

To estimate $J_4$, we first use (cf.~\cite{DemlowNeilan23})
\begin{align*}
\bPi \nab {\bf L}^\ell \ipt \bv^\ell \bPi = \bPi \left(\nab {\bf L} \ipt \bv \bPi_h \left[{\bf I} - \frac{\bnu\otimes \bnu_h}{\bnu\cdot \bnu_h}\right]\big[{\bf I} - d {\bf H}\big]^{-1}\right)^\ell,
\end{align*}
and
\begin{align*}
\bPi \nab {\bf L} \ipt \bv \bPi_h
&= -\mu_h^{-1} \bPi\Big( ({\bf L} \ipt \bv)\otimes \nab \mu_h + \bnu\otimes ({\bf H}\ipt \bv)+({\bf H}\bv)\otimes \bnu + (\bnu\cdot \ipt \bv) {\bf H}+d \nab {\bf H }\ipt \bv\Big)\bPi_h\\
&= -\mu_h^{-1} \Big( ({\bf L} \ipt \bv)\otimes \nab \mu_h\bPi_h  +({\bf H}\ipt \bv)\otimes (\bPi_h \bnu) + (\bnu\cdot \ipt \bv) {\bf H}\bPi_h+d \bPi \nab {\bf H }\ipt \bv \bPi_h \Big).
\end{align*}
Therefore, by setting ${\bf N} = [{\bf I} - \frac{\bnu\otimes \bnu_h}{\bnu\cdot \bnu_h}]$, 
and noting $\bPi_h {\bf N} = {\bf N}$, ${\bf H} {\bf N}^\ell = {\bf H}$, $((\ba\otimes \bb){\bf A}):{\bf B} = ({\bf A} {\bf B}^\intercal \ba)\cdot \bb$,
and $\ipt\bv\cdot \bnu_h=0$, we have
\begin{align*}
\left|\int_\gamma J_4:{\rm Def}_\gamma {\bw}\right|
& \lesssim \left|\int_\gamma \left(\Big( ({\bf L} \ipt \bv)\otimes \nab \mu_h   +({\bf H}\ipt \bv)\otimes (\bPi_h \bnu) + ((\bPi_h \bnu)\cdot \ipt \bv) {\bf H}\Big) 
{\bf N}\right)^\ell : {\rm Def}_\gamma {\bw}  \right|\\
&\qquad + h^{k+1} \|{\bv}\|_{L_2(\gamma)}\|{\bw}\|_{H^1(\gamma)}\\
&=\Big|\int_\gamma \left( ({\bf L} \ipt \bv)\otimes ({\bf N}^\intercal \nab \mu_h)\right)^\ell:{\rm Def}_\gamma {\bw}
+\int_\gamma ({\bf N}^\ell {\rm Def}_\gamma {\bw} {\bf H} \ipt \bv^\ell)\cdot (\bPi_h \bnu)^\ell\\
&\qquad + \int_\gamma \left((\bPi_h \bnu)\cdot \ipt \bv\right)^\ell \bf H : {\rm Def}_\gamma {\bw}))\Big|+h^{k+1} \|{\bv}\|_{L_2(\gamma)}\|{\bw}\|_{H^1(\gamma)}\\
&\lesssim \Big|\int_\gamma \left( ({\bf L} \ipt \bv)\otimes ({\bf N}^\intercal \nab \mu_h)\right)^\ell:{\rm Def}_\gamma {\bw}
+\int_\gamma ({\rm Def}_\gamma {\bw} {\bf H} \bv)\cdot (\bPi_h \bnu)^\ell\\
&\qquad + \int_\gamma \left(\bPi_h \bnu\right)^\ell  \cdot (\bv ({\bf H} : {\rm Def}_\gamma {\bw}))\Big|+h^{k+1} \|{\bv}\|_{L_2(\gamma)}\|{\bw}\|_{H^1(\gamma)},
\end{align*}
where we used $|\bnu_h^\intercal {\rm Def}_\gamma\bw| +|\bPi_h \bnu| \lesssim h^{k}$
and $|\ipt{\bv}^\ell-\bv|\lesssim h^k|\bv|$ in the last inequality.
 Applying \eqref{eqn:HLTrick} 
 to  the second term with $\bz={\rm Def}_\gamma \bw {\bf H} \bv$ and to the third term with $\bz = \bv ({\bf H}^\ell : {\rm Def}_\gamma \bw)$ then yields
\begin{align*}
\left|\int_\gamma J_4:{\rm Def}_\gamma {\bw}\right|
&\lesssim \left|\int_\gamma \left( ({\bf L} \ipt \bv)\otimes ({\bf N}^\intercal \nab \mu_h)\right)^\ell:{\rm Def}_\gamma {\bw}\right| + h^{k+1}\|{\bv}\|_{H^1(\gamma)}\|{\bw}\|_{H^2(\gamma)}.
%
\end{align*}
Using (cf.~\eqref{eqn:JacobiFun})
\[
\nab \mu_h = {\bf H}\bnu_h + {\bf H}_h \bnu - {\rm tr}({\bf H})\bnu+O(h^{k+1}) = {\bf H}\bPi\bnu_h + {\bf H}_h \bPi_h \bnu - {\rm tr}({\bf H})\bnu+O(h^{k+1}),
\]
applying some algebraic identities, and applying \eqref{eqn:HLTrick} {after appropriate $O(h^{k+1})$ perturbations as above to ensure $\bz \in \bW_1^1(\gamma)$ as required}, we have
\begin{align*}
\left|\int_\gamma \left( ({\bf L} \ipt \bv)\otimes ({\bf N}^\intercal \nab \mu_h)\right)^\ell:{\rm Def}_\gamma {\bw}\right|
& \lesssim \left|\int_\gamma ({\bf H} {\bf N} {\rm Def}_\gamma \bw {\bf L} \ipt{\bv})\cdot (\bPi\bnu_h)\right|
 + \left|\int_\gamma ({\bf H}_h {\bf N} {\rm Def}_\gamma \bw {\bf L} \ipt{\bv})\cdot (\bPi_h\bnu)\right|\\
 &\qquad + \left|\int_\gamma {\rm tr}({\bf H}) ({\bf N} {\rm Def}_\gamma \bw {\bf L} \ipt{\bv})\cdot (\bPi_h \bnu)\right| + h^{k+1}\|\bv\|_{L_2(\gamma)}\|\bw\|_{H^1(\gamma)}\\
 &\lesssim h^{k+1}\|\bv\|_{H^1(\gamma)}\|\bw\|_{H^2(\gamma)}.
\end{align*}
Thus, we have the following bound for $J_4$:
\begin{align}\label{eqn:J4Bound}
\left|\int_\gamma J_4:{\rm Def}_\gamma {\bw}\right|\lesssim h^{k+1}\|\bw\|_{H^2(\gamma)}\|\bv\|_{H^1(\gamma)}.
\end{align}

We combine \eqref{eqn:GEStarting} and \eqref{eqn:JsExpand},
along with the estimates \eqref{eqn:J1Bound}, \eqref{eqn:J2Bound}, \eqref{eqn:J3Bound}, and \eqref{eqn:J4Bound}
to get
\begin{align}
 \left|\int_\gamma \big({\rm Def}_\gamma {\bv} - ({\rm Def}_{\Gamma_{h,k}} \ipt \bv)^\ell\big): {\rm Def}_\gamma {\bw} \right|
&\lesssim h^{k+1}\|\bv\|_{H^1(\gamma)}\|\bw\|_{H^2(\gamma)}.
\end{align}
Applying this estimate to \eqref{eqn:StartOfLong} twice then yields
\begin{align*}
|G(\ipt \bv,\ipt \bw)|\lesssim h^{k+1}\|\bv\|_{H^2(\gamma)}\|\bw\|_{H^2(\gamma)}.
\end{align*}

\end{document}